\documentclass[twoside,12pt]{article}

%

\usepackage{jmlr2e}
\usepackage{tikz}
\usetikzlibrary{arrows,decorations.pathmorphing,backgrounds,positioning,fit,petri,graphs}
\usepackage[textfont=scriptsize,labelfont=scriptsize]{caption}
\usepackage[textfont=scriptsize,labelfont=scriptsize]{subcaption}
\captionsetup[figure]{labelfont={scriptsize,bf}}
\captionsetup[table]{labelfont={scriptsize,bf}}
\usepackage{multirow}
\usepackage{booktabs}       
\usepackage[linesnumbered,lined,boxed,commentsnumbered,ruled]{algorithm2e}
\usepackage{enumerate}
\input{dlfh.sty} 
\usepackage{dlfhMicros}
\usepackage{amsmath}
\usepackage{color}
\definecolor{gmcolor}{rgb}{.7, 0, .5}

\usepackage{array}
\newcolumntype{P}[1]{>{\centering\arraybackslash}p{#1}}








\firstpageno{1}

\begin{document}

\title{Distributed Learning via Filtered Hyperinterpolation on Manifolds}

\author{\name Guido Mont\'{u}far \email \texttt{montufar@math.ucla.edu} \\
       \addr Department of Mathematics and Department of Statistics\\
       University of California, Los Angeles, USA \\
       Max Planck Institute for Mathematics in the Sciences, Leipzig, Germany
       \AND
       \name Yu Guang Wang \email \texttt{yuguang.wang@mis.mpg.de} \\
       \addr Max Planck Institute for Mathematics in the Sciences, Leipzig, Germany\\
       School of Mathematics and Statistics\\
       University of New South Wales, Sydney, Australia}

\editor{\today}

\maketitle

\begin{abstract}
Learning mappings of data on manifolds is an important topic in contemporary machine learning, with applications in astrophysics, geophysics, statistical physics, medical diagnosis, biochemistry, 3D object analysis. This paper studies the problem of learning real-valued functions on manifolds through filtered hyperinterpolation of input-output data pairs where the inputs may be sampled deterministically or at random and the outputs may be clean or noisy. Motivated by the problem of handling large data sets, it presents a parallel data processing approach which distributes the data-fitting task among multiple servers and synthesizes the fitted sub-models into a global estimator. We prove quantitative relations between the approximation quality of the learned function over the entire manifold, the type of target function, the number of servers, and the number and type of available samples. We obtain the approximation rates of convergence for distributed and non-distributed approaches. For the non-distributed case, the approximation order is optimal.
\end{abstract}

\begin{keywords}
  Distributed learning, 
  Filtered hyperinterpolation, 
  Approximation on manifolds, 
  Kernel methods,
  Numerical integration on manifolds,
  Quadrature rule,
  Random sampling,
  Gaussian white noise
\end{keywords}

\tableofcontents

\section{Introduction}\label{sec:intro}
Learning functions over manifolds has become an increasingly important topic in machine learning. 
The performance of many machine learning algorithms depends strongly on the geometry of the data. 
In real-world applications, one often has huge data sets with noisy samples.
In this paper, we propose \emph{distributed filtered hyperinterpolation on manifolds}, which combines filtered hyperinterpolation and distributed learning \citep{LiGuZh2017,LiZh2018}.
Filtered hyperinterpolation \citep{SlWo2012,WaLeSlWo2017} provides a constructive approach to modelling mappings between inputs and outputs in a way that can reduce the influence of noise.
The distributed strategy assigns the learning task of the input-output mapping to multiple local servers, enabling parallel computing for massive data sets. 
Each server handles a small fraction of all data by filtered hyperinterpolation. 
It then synthesizes the local estimators as a global estimator. 
We show the precise quantitative relation between the approximation error of the distributed filtered hyperinterpolation, the number of the local servers, and the amount of data. The approximation error (over the entire manifold) converges to zero provided the available amount of data increases sufficiently fast with the number of servers. 

\begin{figure}[tb]
\centering
\begin{tikzpicture}[x=2cm,y=2cm]
\node at (.5,1.75) [fill=black,circle,inner sep=1pt, node distance = .5cm, label=$f^\ast$] (f) {};

\draw[] 
(0,1) to [bend left] 
node[pos=.5, inner sep=0pt,minimum size=0pt] (c) {} 
(.25,0) 
to [bend left] 
node[pos=1.15,above] (n2) {$\Pi_{2n}$} 
(2.25,0) 
to [bend left] 
node[pos=.5, inner sep=0pt,minimum size=0pt] (cc) {} (2,1) 
to [bend right] (0,1)
-- cycle; 

\draw[thick] (c) to [bend left] 
node[pos=.3,fill=black,circle,inner sep=1pt, label={[below right]:$f^\ast_{\Pi_n}$}] (fs) {} 
node[pos=1.15, above] (n) {$\Pi_n$} (cc); 

\node [above right of = fs, fill=black,circle,inner sep=1pt, node distance = .75cm, label={[right]:$V_{D,n}$}] (fts) {};
\path[draw, dotted] (f) to (fs);
\path[draw, dotted] (f) to (fts);
\end{tikzpicture}
\quad 
\begin{tikzpicture}[x=2cm,y=2cm]
\node at (.5,1.75) [fill=black,circle,inner sep=1pt, node distance = .5cm, label=$f^\ast$] (f) {};
\draw[] 
(0,1) to [bend left] 
node[pos=.5, inner sep=0pt,minimum size=0pt] (c) {} 
(.25,0) 
to [bend left] 
node[pos=1.15, above] (n2) {$\Pi_{2n}$} 
(2.25,0) 
to [bend left] 
node[pos=.5, inner sep=0pt,minimum size=0pt] (cc) {} (2,1) 
to [bend right] (0,1)
-- cycle; 

\draw[thick] (c) to [bend left] 
node[pos=.3,fill=black,circle,inner sep=1pt, label={[below right]:$f^\ast_{\Pi_n}$}] (fs) {} 
node[pos=1.15, above] (n) {$\Pi_n$} (cc); 

\node [above right of = fs, fill=black,circle,inner sep=1pt, node distance = .5cm, label={[right]:\small$V_{D,n}^{(m)}$}
] (fts) {};

\node [above right of = fts, fill=black,circle,inner sep=.75pt, node distance = .7cm, label={[right]:\tiny$V_{D_1,n}$}
] (fts1) {};

\node [left of = fts, fill=black,circle,inner sep=.75pt, node distance = .75cm, label={[left]:\tiny$V_{D_2,n}$}
] (fts2) {};

\node [below right of = fts, fill=black,circle,inner sep=.75pt, node distance = .75cm, label={[below]:\tiny$\phantom{asfasf} V_{D_3,n}$}
] (fts3) {};
\path[draw, dotted] (f) to (fs);
\path[draw, dotted] (f) to (fts);
\end{tikzpicture}
\caption{Illustration of approximations computed on a single server and distributed servers. 
In the left part, $V_{D,n}$ is a filtered hyperinterpolation function constructed from a data set $D$ from the target function $f^*$. 
We show that the distance between $f^*$ and $V_{D,n}$ is approximately equal to the distance between $f^*$ and $f^\ast_{\Pi_n}$, which is the optimal approximation in the space $\Pi_n$. 
In the right part, $V_{D,n}^{(m)}$ is a weighted average of the individual filtered hyperinterpolations $V_{D_j,n}$ obtained from multiple datasets sampled from the target function $f^*$. 
Here again, the distance between $f^*$ and $V_{D,n}^{(m)}$ is approximately equal to the distance between $f^*$ and its optimal approximation $f^\ast_{\Pi_n}$ in $\Pi_n$. 
}\label{fig:learning_of_dfl}
\end{figure}
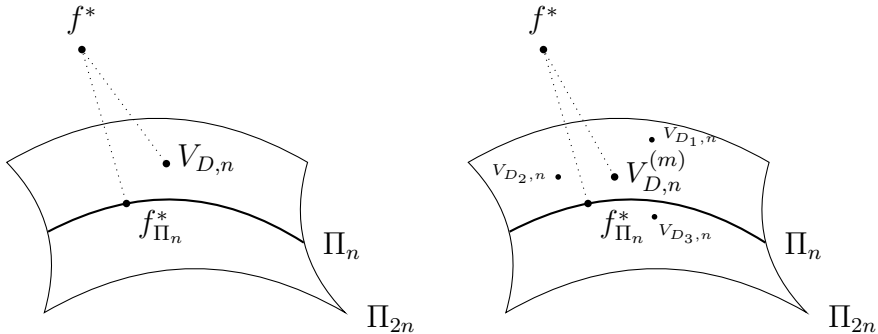

Filtered hyperinterpolation was introduced by \cite{SlWo2012} on the two-sphere $\mathbb{S}^2$, which is a form of filtered polynomial approximation method motivated by hyperinterpolation \citep{Sloan1995}. 
Hyperinterpolation uses a Fourier expansion where the integral for the Fourier coefficients is approximated by numerical integration with a quadrature rule. The filtered hyperinterpolation adopts a similar strategy as hyperinterpolation but uses a filter to modify the Fourier expansion. 
The filter is a restriction on the eigenvalues of the basis functions. 
Effectively this restricts the capacity of the approximation class and yields a reproducing property for polynomials of a certain degree specified by the filter. It has some similarities to kernel methods. 
Filtering improves the approximation accuracy of plain hyperinterpolation for noiseless data that is sampled deterministically \citep{HeSl2006}. With appropriate choice of filter, the filtered hyperinterpolation achieves the best approximation by polynomials of a given degree depending on the amount of data (see Section~\ref{sec:ndfh_clean}). 
As shown in the left part of Figure~\ref{fig:learning_of_dfl}, one aims at finding the closest approximation of $f^*$ within the polynomial space $\Pi_n$ on the manifold $\mfd$, which, nevertheless, is difficult to achieve. The filtered hyperinterpolation is an approximator $V_{D,n}$ constructed from data $D=(\bx_i, y_i)_{i=1}^N$ which lies in a slightly larger polynomial space $\Pi_{2n}$ and whose distance to $f^\ast$ is very close to the distance between $f^\ast$ and $\Pi_n$. 

Motivated by the problem of handling massive amounts of data, we propose a distributed computational strategy based on filtered hyperinterpolation. 
As shown in the right part of Figure~\ref{fig:learning_of_dfl}, we can split estimation task of filtered hyperinterpolation into multiple servers $j=1,\ldots,m$, each of which computes a filtered hyperinterpolation $V_{D_j,n}$,  for a small subset $D_j$ of all the training data. 
It consists of creating a filtered expansion in terms of eigenfunctions of the manifold to best-fit the corresponding fraction of the training data set. 
The ``best-fit'' means that the local servers can achieve best approximation for noisy data $y_i=f^*(\bx_i)+\epsilon_i$, $i=1,\ldots, N$, for any continuous function $f^*\colon\mathcal{M}\to\mathbb{R}$ on the manifold and independent bounded noise $\epsilon_i$. 
The central processor then takes a weighted average of the filtered hyperinterpolations obtained in the local servers to synthesize as a global estimator $V_{D,n}^{(m)}$. We call the global estimator the distributed filtered hyperinterpolation. 

The remaining of the paper is organized as follows. 
In Section~\ref{sec:preliminaries}, we introduce the main mathematical settings and notation. Then we proceed with the study of non-distributed and distributed filtered hyperinterpolation on manifolds, for which we derive upper bounds on the error. 
Our bounds depend on 
1) the dimension $d$ of the manifold and the smoothness $r$ of the Sobolev space that contains the target function, 
2) the degree $n$ of the approximating polynomials, 
which is tied to the number $N$ of available data points, 3) the smoothness of the filter, 
4) the presence of noise in the output data points. 
Here we base the analysis on properties of the quadrature formulas, which we couple with the arrangement of the input data points (deterministic or random). For the deterministic case, we require the quadrature rule has polynomial exactness of degree $3n-1$; for the random case, the condition that the volume measure on the manifold controls the distribution of the sampling points. 

In Section~\ref{sec:ndfh} we study non-distributed filtered interpolation on manifolds. We obtain an error bound $\bigo{}{N^{-r/d}}$
for the noiseless setting on general manifolds (see Theorem~\ref{thm:nondfh_clean_det}). This result generalizes the same bound that was previously obtained on the sphere \citep{WaSl2017}. 
Since the bound on the sphere is optimal, the new bound is also optimal. 
We further study learning with noisy output data. The error bound for the noisy case is $\bigo{}{N^{-2r/(2r +d)}}$. 
Due to the impact of the noise, it does not entirely reduce to the error bound of the noiseless case. To the best of our knowledge, this is the first error upper bound for noisy learning on general Riemannian manifolds. 
The optimality of this bound remains open at this point. 
 
In Section~\ref{sec:dfh} we study distributed learning. 
We obtain similar rates of convergence as in the non-distributed setting, provided the number of servers satisfies a certain upper bound in terms of the total amount of data. 
As it turns out, the distributed estimator has the same convergence rate as the non-distributed estimator for a class of functions with given smoothness. Compared with the clean data case, the distributed filtered hyperinterpolation with noisy data has slightly lower convergence order than the non-distributed. See Theorems~\ref{thm:dfh_noisy_det} and \ref{thm:dfh_ran_noisy}. 
 
Section~\ref{sec:example} illustrates definitions, methods, and convergence results on a concrete numerical example. 
Section~\ref{sec:comparison} summarizes and compares the convergence rates of the different methods and settings (see Table~\ref{tab:comparison}). 
It also presents a concise description of the implementation (see Algorithm~\ref{algorithm}). 
All the proofs are deferred to Appendix~\ref{sec:proofs}. The proofs utilize the wavelet decomposition of filtered hyperinterpolation,  Marcinkiewicz-Zygmund inequality, Nikolski\^{\i}-type inequality on a manifold, bounds of best approximation on a manifold, and concentration inequality, estimates of covering number and bounds of sampling operators from learning theory.
We also show a table of the notations used throughout the article in Appendix~\ref{app:notations} for readers' reference.

\section{Preliminaries on approximation on manifolds}
\label{sec:preliminaries}
In this section, we discuss $L_p$ and Sobolev spaces of functions on manifolds, assumptions on the manifolds and embedding theorems to the space of continuous functions. 

We start with a brief description of $L_p$ spaces and norms. 
Let $\mfd$ be a compact and smooth Riemannian manifold of dimension $d\ge1$ with smooth or empty boundary and Riemannian measure $\memf$ normalized to have the total volume $\mu(\mfd)=1$. 
For $1\le p<\infty$, let $\Lpm{p}=\Lpm[\mfd,\mu]{p}$ be the complex-valued $\mathbb{L}_{p}$-function space with respect to the measure $\memf$ on $\mfd$, endowed with the $\mathbb{L}_{p}$ norm
\begin{equation*}
   \norm{f}{\Lpm{p}}:=\left\{\int_{\mfd}|f(\bx)|^{p}\dmf{x}\right\}^{1/p},\quad f\in \Lpm{p}.
\end{equation*}
For $p=\infty$, let $\Lpm{\infty}:=\contm$ be the space of continuous functions on $\mfd$ with norm
\begin{equation*}
  \norm{f}{\Lpm{\infty}}:= \sup_{\bx\in \mfd}|f(\bx)|,\quad f\in \contm.
\end{equation*}
We will write $\norm{f(\bx)}{\Lpm{p},\bx}=\norm{f}{\Lpm{p}}$ to indicate the variable for integration when necessary.
For $p=2$, $\Lpm{2}$ is a Hilbert space with inner product $\InnerL{f,g}:=\int_{\mfd}f(\bx)\conj{g(\bx)}\dmf{x}$, $f,g\in\Lpm{2}$, where $\conj{g}$ is the complex conjugate to $g$.

\subsection{Diffusion polynomial space} 
Diffusion polynomials are a generalization of regular polynomials. We will use them to construct approximations of real-valued functions on manifolds. Let $\N:=\{1,2,\dots\}$ be the set of positive integers and let $\Nz=\N\cup \{0\}$. Let $\LBm$ be the \emph{Laplace-Beltrami operator} on $\mfd$, which has a sequence of eigenvalues $\{\eigvm\}_{\ell\in\N}$ and a corresponding sequence of orthonormal eigenfunctions $\{\eigfm\in\Lpm{2}\: |\: \LBm\eigfm = -\eigvm^{2}\: \eigfm,\; \ell\in\N\}$. We let $\eigvm[0]:=0$ and $\eigfm[0]:=1$. 
For $n\in\Nz$, the span $\polyspm:=\spann\{\eigfm| \eigvm\le n\}$ is called the diffusion polynomial space of degree $n$ on $\mfd$, and an element of $\polyspm$ is called a diffusion polynomial of degree $n$. In the following, we will refer to diffusion polynomials simply as polynomials. 

Let $\dist{\bx,\by}$ be the geodesic distance of points $\bx$ and $\by$ induced by the Riemannian metric on $\mfd$.
For $\bx\in\mfd$ and $\alpha,\beta>0$, let $\ball{\bx,\alpha}:=\{\by\in\mfd|\dist{\bx,\by}\le \alpha\}$ be the ball with center $\bx$ and radius $\alpha$, and let $\ball{\bx,\beta,\beta+\alpha}:= \ball{\bx,\beta+\alpha}-\ball{\bx,\alpha}$ and $\ball{\bx,0,\alpha}:=\ball{\bx,\alpha}$. 
We make the following assumptions for the measure $\memf$ and the eigenfunctions of $\LBm$ on $\mfd$. 
The first is a standard assumption about the regularity of the measure on the manifold. 
\begin{assumption}[Volume of ball]
\label{assump:ball.vol}
There exists a positive constant $c$ depending only upon the measure $\memf$ and the dimension $d$ such that for all $\alpha>0$ and $\bx\in\mfd$,
    \begin{equation*}
        \memf\left(\ball{\bx,\alpha}\right) = c \:\alpha^{d}.
    \end{equation*}
\end{assumption}
The second is an assumption stating that the space of polynomials is closed under multiplication. 
\begin{assumption}[Product of eigenfunctions]\label{assump:poly}
For $\ell,\ell'\in \Nz$, the product of eigenfunctions $\eigfm$, $\eigfm[\ell']$ for the Laplace-Beltrami operator $\LBm$ on $\mfd$ is a polynomial of degree $\ell+\ell'$, i.e. $\eigfm\: \eigfm[\ell']\in \polyspm[\ell+\ell']$.
\end{assumption}

Assumption~\ref{assump:poly} implies that the product $P_{\ell}P_{\ell'}$ of two polynomials $P_{\ell}\in\polyspm[\ell]$ and $P_{\ell'}\in\polyspm[\ell']$ of degrees $\ell\in\Nz$ and $\ell'\in\Nz$, respectively, 
is a polynomial of degree $\ell+\ell'$. 
Assumptions~\ref{assump:ball.vol} and \ref{assump:poly} are satisfied by typical manifolds, such as hypercubes $[0,1]^{d}$, unit spheres and balls in real or complex Euclidean coordinate spaces \citep{HeSlWo2010,dai2013approximation}, flat tori $\torus{d}$, $d\ge1$, and Grassmannians \citep{Breger2017,breger2017quasi}, simplexes in $\Rd$ \citep{wang2018analysis,xu2010fourier}, with Lebesgue measures induced by the corresponding Riemannian metric, and also graph (a discrete manifold) which Lebesgue is the atom measure on graph nodes \citep{wang2019tight}.

\subsection{Generalized Sobolev spaces}
We give a brief introduction to the Sobolev spaces on a Riemannian manifold $\mfd$. 
The \emph{Fourier coefficients} for $f$ in $\Lpm{1}$ are
\begin{equation*}
 \Fcoem{f}:=\int_{\mfd}f(\bx)\conj{\eigfm(\bx)} \dmf{x}, \;\; \ell=0,1,\dots.
\end{equation*}
For $s>0$, the \emph{generalized Sobolev space} $\sobm{p}{s}$ may be defined as the set of all functions $f\in \Lpm{p}$ satisfying
$\sum_{\ell=0}^{\infty}(1+\eigvm)^{s/2}\Fcoem{f}\: \eigfm\: \in\: \Lpm{p}$.
The Sobolev space $\sobm{p}{s}$ forms a Banach space with norm
\begin{equation*}
  \norm{f}{\sobm{p}{s}}:=\normB{\sum_{\ell=0}^{\infty}(1+\eigvm)^{s/2} \Fcoem{f}\: \eigfm}{\Lpm{p}}.
\end{equation*}
We let $\sobm{p}{0}:=\Lpm{p}$.

In the context of numerical analysis, we need to use the following Lemma~\ref{lm:embed.sob.C} which is an embedding theorem of Sobolev space into the space of continuous functions on a manifold, see e.g. \citet[Section~2.7]{Aubin1998}. It guarantees that any function in the Sobolev space has a representation by a continuous function so that the numerical integration is valid and the quadrature rule can be applied. 

\begin{lemma}\label{lm:embed.sob.C} Let $d\ge1$ and $\mfd$ be a compact Riemannian manifold of dimension $d$. The Sobolev space $\sobm{p}{s}$ is continuously embedded into $\contm$ if $s>d/p$.
\end{lemma}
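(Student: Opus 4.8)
The plan is to prove the Sobolev embedding $\sobm{p}{s}\hookrightarrow\contm$ for $s>d/p$ by reducing it to a pointwise estimate on the Fourier series expansion, which in turn follows from the Weyl-type growth law for eigenvalues of the Laplace-Beltrami operator together with sup-norm bounds on the eigenfunctions. The key analytic input is the well-known spectral asymptotics on a compact Riemannian manifold: the counting function $N(\lambda)=\#\{\ell : \eigvm\le\lambda\}$ satisfies $N(\lambda)\asymp \lambda^d$, and the eigenfunctions obey $\norm{\eigfm}{\Lpm{\infty}} \lesssim (1+\eigvm)^{(d-1)/2}$ (Hörmander), or more crudely the Christoffel-type bound $\sum_{\eigvm\le\lambda}|\eigfm(\bx)|^2\lesssim\lambda^d$ uniformly in $\bx$, which is what I would actually invoke. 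Since the paper cites \citet[Section~2.7]{Aubin1998}, one may alternatively quote the classical Sobolev embedding in local charts and patch with a partition of unity; I would present both routes but lead with the spectral one since it matches the paper's Fourier-analytic setup.

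First I would fix $f\in\sobm{p}{s}$ and, using the definition of the Sobolev norm, write $g:=\sum_{\ell\ge0}(1+\eigvm)^{s/2}\Fcoem{f}\,\eigfm\in\Lpm{p}$, so that $\Fcoem{f}=(1+\eigvm)^{-s/2}\Fcoem{g}$. Then I would estimate, for $\bx\in\mfd$,
\begin{equation*}
  \sum_{\ell\ge0}|\Fcoem{f}|\,|\eigfm(\bx)|
  = \sum_{\ell\ge0}(1+\eigvm)^{-s/2}|\Fcoem{g}|\,|\eigfm(\bx)|,
\end{equation*}
and apply Hölder's inequality with exponents $p$ and $p'$ (the conjugate) after grouping the eigenvalues into dyadic blocks $\Lambda_j:=\{\ell:2^{j}\le 1+\eigvm<2^{j+1}\}$. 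On each block, $\sum_{\ell\in\Lambda_j}|\Fcoem{g}|\,|\eigfm(\bx)| \le \big(\sum_{\ell\in\Lambda_j}|\Fcoem{g}|^{p}\big)^{1/p}\big(\sum_{\ell\in\Lambda_j}|\eigfm(\bx)|^{p'}\big)^{1/p'}$; the first factor is controlled by $\norm{g}{\Lpm{p}}$ (Bessel/Hausdorff-Young type, or just $\ell^2\subset\ell^p$ monotonicity for $p\ge2$, with the complementary range handled dually), and the second by the uniform Christoffel bound, which gives $\sum_{\ell\in\Lambda_j}|\eigfm(\bx)|^{p'}\lesssim 2^{jd}$ up to a harmless loss. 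Multiplying by the block weight $2^{-js/2}$ and summing the geometric series in $j$ converges precisely because $s>d/p$, yielding $\sum_\ell|\Fcoem{f}|\,\norm{\eigfm}{\Lpm\infty}\lesssim\norm{f}{\sobm{p}{s}}$.

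This absolute, uniform convergence of the Fourier series shows that the partial sums $S_K f:=\sum_{\eigvm\le K}\Fcoem{f}\,\eigfm$ form a Cauchy sequence in $\contm$, hence converge to a continuous function $\tilde f$; since they also converge to $f$ in $\Lpm{p}$, we get $f=\tilde f$ a.e., i.e.\ $f$ has a continuous representative, and $\norm{f}{\contm}\le C\norm{f}{\sobm{p}{s}}$, which is exactly continuity of the embedding. The main obstacle is getting the eigenfunction $L^{p'}$-bound on dyadic blocks with the right power of the eigenvalue — for $p\le 2$ (so $p'\ge2$) one cannot simply use the $L^2$ Christoffel bound termwise and must either invoke Hörmander's $L^\infty$ eigenfunction estimate $\norm{\eigfm}{\Lpm\infty}\lesssim(1+\eigvm)^{(d-1)/2}$ combined with the block cardinality $|\Lambda_j|\lesssim 2^{jd/2}$ (from Weyl's law), or interpolate; keeping track of which smoothness threshold each route actually produces and confirming it is $\le s>d/p$ in all cases is the delicate bookkeeping. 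If one prefers to avoid sharp eigenfunction asymptotics entirely, the cleanest fallback is the partition-of-unity argument: cover $\mfd$ by finitely many charts, transfer to $\R^d$, apply the Euclidean Sobolev embedding $W^{s,p}(\R^d)\hookrightarrow C(\R^d)$ for $s>d/p$, and use that the chart changes and the operator $(1-\LBm)^{s/2}$ are comparable to the flat fractional Laplacian up to lower-order terms — this is the route \citet[Section~2.7]{Aubin1998} takes, and I would cite it as the authoritative reference while sketching the spectral proof for self-containedness.
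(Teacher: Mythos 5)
The paper does not actually prove this lemma---it states it with a citation to \citet[Section~2.7]{Aubin1998}---so your fallback route (charts, partition of unity, the Euclidean embedding $W^{s,p}(\mathbb{R}^d)\subset C$ for $s>d/p$) is precisely the intended argument and is fine. The spectral route you lead with, however, has a genuine gap: deducing continuity from \emph{absolute, uniform} convergence of the expansion $\sum_\ell \Fcoem{f}\,\eigfm$ proves something strictly stronger than the lemma, and that stronger statement is false at the threshold $s>d/p$ once $p>2$. Already on the circle, Bernstein's theorem shows absolute convergence of a Fourier series requires H\"older regularity above $1/2$ and that this is sharp; hence there are continuous functions in $W^{s,p}(\mathbb{T})$ with $1/p<s<1/2$ and $p$ large whose Fourier series do not converge absolutely. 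Your dyadic computation reflects this: the factor $\bigl(\sum_{\ell\in\Lambda_j}|\Fcoem{g}|^{p}\bigr)^{1/p}$ is controlled by $\norm{g}{\Lpm{p}}$ only for $p\ge2$ (for $p<2$ there is no $\ell^p$ bound on Fourier coefficients of $L_p$ functions; Hausdorff--Young gives $\ell^{p'}$, and on a general manifold even that requires uniform sup bounds on the $\eigfm$, which fail, e.g., for zonal harmonics on spheres), while the Christoffel bound makes the second factor of size $2^{jd/p'}$, so the geometric series closes only for smoothness exceeding $d/p'$, which agrees with $d/p$ only at $p=2$. (Also, Weyl's law in the paper's normalization $\LBm\eigfm=-\eigvm^{2}\eigfm$ gives block cardinality of order $2^{jd}$, not $2^{jd/2}$ as you wrote.)

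If you want a self-contained spectral proof that does reach $s>d/p$, the correct move is to ask for absolute convergence of the \emph{dyadic blocks} in $\contm$ rather than of the individual Fourier coefficients: write $f=\sum_{j}\contrib{j}(f)$ using \eqref{eq:contrib}, note $\contrib{j}(f)\in\polyspm[2^{j}]$, and combine the Nikolski\^{\i} inequality (Lemma~\ref{lem:Nikolskii_ineq} with $q=\infty$) with \eqref{eq:Lp.nrm.Uj} to obtain $\normb{\contrib{j}(f)}{\Lpm{\infty}}\le c\,2^{jd/p}\normb{\contrib{j}(f)}{\Lpm{p}}\le c\,2^{j(d/p-s)}\norm{f}{\sobm{p}{s}}$. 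The series then converges absolutely in $\contm$ precisely when $s>d/p$, and by \eqref{eq:Lp.err.Uj} its sum is a continuous representative of $f$ with $\norm{f}{\contm}\le C\norm{f}{\sobm{p}{s}}$. This recovers the sharp threshold for all $1\le p\le\infty$ using only machinery already present in the paper.
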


\subsection{Filtered approximation on manifolds}\label{sec:fiapprox}
This section defines the filtered polynomial approximation on a compact Riemannian manifold $\mfd$ in terms of the eigenfunctions of the Laplace-Beltrami operator $\LBm$ on $\mfd$. Given a target function $f^\ast\in\Lpm{p}$ with $1\le p\le\infty$, the filtered polynomial approximation converges to functions in $\Lpm{p}$ 
as the degree $n$ tends to infinity. 

\paragraph{Filter} A real-valued continuous compactly supported function on $\Rplus$ is called a \emph{filter}. Without loss of generality, we will only consider filters with support a subset of $[0,2]$.
In this paper, we focus on the following function $\fiH$ on $\Rplus$ as the filter.
\begin{definition}[Filter $H$]\label{def:fiH}
    Let $\fiH$ be a filter on $\Rplus$ satisfying $\fiH(t)=1$, $0\le t\le 1$; $\fiH(t)=0$, $t\ge2$, and $\fiH\in \CkRp$ for some $\fis\in \N$.
\end{definition}

\begin{definition}[Filtered kernel]\label{def:fil.fil.ker}
A filtered kernel of degree $n$ for $n\in \N$ on $\mfd$ with filter $\fiH$ is defined by
\begin{equation}\label{eq:fiker}
  \vdh{n}(\bx,\by):=\vdh{n,\fiH}(\bx,\by):=\sum_{\ell=0}^{\infty}\fiH\Bigl(\frac{\eigvm}{n}\Bigr)\:\eigfm(\bx)\conj{\eigfm(\by)}.
\end{equation}
Here $\lambda_\ell$ and $\phi_\ell$ are eigenvalues and eigenfunctions of the Laplace-Beltrami operator on $\mathcal{M}$.
\end{definition}

For a kernel $G:\mfd\times\mfd\to\Rone$ and $f\in\Lpm{1}$, the \emph{convolution} of $f$ with $G$ is defined as
\begin{equation}
    (G\conv f)(\bx):= \int_{\mfd}G(\bx,\PT{z})f(\PT{z})\dmf{z},\quad x\in\mfd.
\end{equation}

\begin{definition}[Filtered approximation]\label{defn:fiapp}
We can define a \emph{filtered approximation} $\fiapprox{n}$ on $\Lpm{1}$ as an integral operator with the filtered kernel $\vdh{n,\fiH}(\cdot,\cdot)$: for $f\in \Lpm{1}$ and $\bx\in\mfd$,
\begin{equation}\label{eq:fiapprox}
  \fiapprox{n}(f;\bx) := \fiapprox{n,\fiH}(f;\bx):= (\vdh{n,\fiH}\conv f)(\bx)
  :=\int_{\mfd}\vdh{n,\fiH}(\bx,\PT{z})f(\PT{z})\dmf{z}.
\end{equation}
Note that for $n=0$ this is just the integral of $f$. By \eqref{eq:fiker} and \eqref{eq:fiapprox},
\begin{equation*}
  \fiapprox{n}(f) = \sum_{\ell=0}^{\infty}\fiH\left(\frac{\eigvm}{n}\right)\Fcoem{f}\:\eigfm, \quad f\in\Lpm{1}.
\end{equation*}
\end{definition}

The following lemma, as given by \citet[Theorem~4.1]{MaMh2008}, shows that a filtered kernel is highly localized when the filter is sufficiently smooth.
\begin{lemma}\label{lem:localization} Let $d\ge1$. Let $\mfd$ be a compact Riemannian manifold of dimension $d$. Let $\fiH$ be a filter in $\CkRp$ with $\fis\ge d+1$. Then, for $n\ge1$,
\begin{equation}\label{eq:fiker.localisation}
    \bigl|\vdh{n}(\bx,\by)\bigr| \le \frac{c\: n^{d}}{(1+n\dist{\bx,\by})^{\fis}},\quad \bx,\by\in\mfd,
\end{equation}
where the constant $c$ depends only on $d,\fiH$ and $\fis$ and $\dist{\bx,\by}$ is the geodesic distance between $\bx$ and $\by$.
\end{lemma}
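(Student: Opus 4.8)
The plan is to reduce the statement to an estimate on a single Riemannian manifold that follows from the general theory of localized kernels built from spectral data of the Laplace–Beltrami operator, as developed by Maz'ya–Mhaskar and collaborators. The key point is that the filter $\fiH$ is $\fis$-times continuously differentiable and compactly supported in $[0,2]$, so its Fourier/Mellin-type transform decays like $(1+|\cdot|)^{-\fis}$; combined with finite-speed-of-propagation / heat-kernel Gaussian bounds on $\mfd$, this decay is transferred to spatial decay of $\vdh{n}(\bx,\by)$ in the variable $n\dist{\bx,\by}$.

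Concretely, I would proceed as follows. First, write $\vdh{n}(\bx,\by)=\sum_{\ell\ge0}\fiH(\eigvm/n)\eigfm(\bx)\conj{\eigfm(\by)}$ and observe that, since $\fiH$ is supported in $[0,2]$, only eigenvalues with $\eigvm\le 2n$ contribute; thus $\vdh{n}(\cdot,\cdot)\in\polyspm[2n]\otimes\polyspm[2n]$ and the sum is finite. Second, express $\fiH$ via its representation as (a multiple of) the cosine transform of a function $g$ with $\|g\|_{?}$ controlled by $\|\fiH\|_{\CkRp}$ and $g(u)=\bigo{}{(1+|u|)^{-\fis}}$ (this is the standard ``$\fis$ derivatives $\Rightarrow$ decay of order $\fis$'' estimate for compactly supported functions). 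Plugging this into $\fiH(\sqrt{\LBm}/n)$ and using the spectral calculus one gets
\[
  \vdh{n}(\bx,\by)=c\int_{\Rone} g(u)\,\cos\!\bigl(\tfrac{u}{n}\sqrt{\LBm}\bigr)(\bx,\by)\,\mathrm{d}u,
\]
where $\cos(t\sqrt{\LBm})$ is the wave propagator. Third, invoke finite speed of propagation: $\cos(t\sqrt{\LBm})(\bx,\by)$ is supported in $\{\dist{\bx,\by}\le|t|\}$, so the integral only sees $|u|\ge n\dist{\bx,\by}$; estimating the propagator kernel in $\Lpm{2}$-type norms against the Weyl bound on the spectral projector $\sum_{\eigvm\le\lambda}|\eigfm(\bx)|^2\lesssim\lambda^{d}$ (equivalently, using Assumption~\ref{assump:ball.vol} together with Gaussian heat-kernel bounds) yields $|\vdh{n}(\bx,\by)|\lesssim n^{d}\int_{|u|\ge n\dist{\bx,\by}}|g(u)|\,\mathrm{d}u\lesssim n^{d}(1+n\dist{\bx,\by})^{-\fis+1}$, and a slightly more careful accounting (using $\fis\ge d+1$ to spend one derivative on the $n^d$ normalization and keep $(1+n\dist{\bx,\by})^{-\fis}$) gives the claimed bound with a constant depending only on $d,\fiH,\fis$. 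Alternatively, and more in the spirit of the cited reference, one simply quotes \citet[Theorem~4.1]{MaMh2008} directly after checking that Assumption~\ref{assump:ball.vol} and the smooth-compact-Riemannian-manifold hypotheses place us in their setting.

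The main obstacle is the passage from smoothness of the scalar filter $\fiH$ to spatial localization of the operator kernel: this requires either (i) the full machinery of the functional calculus for $\sqrt{\LBm}$ together with finite speed of propagation and Weyl-type spectral estimates, or (ii) an explicit Abel/Laplace-transform argument combined with heat-kernel Gaussian bounds on the manifold. Both are technical and are precisely what \citet[Theorem~4.1]{MaMh2008} establishes under hypotheses that our Assumption~\ref{assump:ball.vol} guarantees; so the cleanest route is to verify the hypotheses of that theorem and cite it, rather than reproving the localization from scratch. Everything else — the finiteness of the sum, the reduction to $\eigvm\le2n$, and tracking the dependence of the constant on $d$, $\fiH$, $\fis$ — is routine.
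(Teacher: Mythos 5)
Your proposal ends up exactly where the paper does: the paper gives no proof of Lemma~\ref{lem:localization} at all, but quotes it directly from \citet[Theorem~4.1]{MaMh2008}, which is precisely the ``cleanest route'' you recommend after checking that Assumption~\ref{assump:ball.vol} and the compact-smooth-manifold hypotheses place us in that theorem's setting. Your accompanying wave-propagator sketch is a legitimate outline of the known derivation, though (as you yourself flag) the crude tail estimate only yields $(1+n\dist{\bx,\by})^{-\fis+1}$ and recovering the full exponent $\fis$ needs the finer heat-kernel/Tauberian machinery of the cited reference, so deferring to it is the right call.
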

By \eqref{eq:fiker.localisation}, if $\by$ is not close to $\bx$, and $|K_n(\bx,\by)|$ decays to zero with rate $n^{\kappa-d}$. It means given $\bx$, the kernel $|K_n(\bx,\cdot)|$ is concentrated on a small neighbourhood of $\bx$, although it is supported on the whole manifold.
This localization is essential to the boundedness of the filtered approximation operator.
\begin{remark}
    For sphere $\mfd$ case, the above lemma for $p=1$ was proved by \cite{WaLeSlWo2017} (see
also \cite{NaPeWa2006} for $\kappa\geq d+1$); the case $p> 1$ can
be obtained from the case $p=1$ with the fact that $K_n\in\Pi_{2n}^d$ and the Nikolski\^{\i} inequality for spherical
polynomials \citep{MhNaWa1999}.
\end{remark}

Lemma~\ref{lem:localization} by \citet[Eq.~6.28]{MaMh2008} implies the following estimate for the $L_p$-norm of the filtered kernel.
\begin{lemma}\label{lem:fikerL1} Let $d\ge1$ and $1\leq p\leq \infty$. Let $\mfd$ be a compact Riemannian manifold of dimension $d$. Let $\fiH$ be a filter in $\CkRp$ with $\fis\ge d+1$. Then, for $n\geq1$ and $\bx\in\mfd$,
\begin{equation*}
    \normb{\vdh{n}(\cdot,\bx)}{\Lpm{p}} \le c\: n^{d(1-1/p)},
\end{equation*}
where the constant $c$ depends only on $d, p,\fiH$ and $\fis$.
\end{lemma}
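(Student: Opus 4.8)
The plan is to deduce the claimed $L_{p}$ bound from the pointwise localization estimate \eqref{eq:fiker.localisation} of Lemma~\ref{lem:localization}, combined with the volume growth $\memf(\ball{\bx,\alpha})=c\,\alpha^{d}$ of Assumption~\ref{assump:ball.vol}. I would first dispose of the endpoint $p=\infty$: taking $\by=\bx$ in \eqref{eq:fiker.localisation} gives at once $\normb{\vdh{n}(\cdot,\bx)}{\Lpm{\infty}}=\sup_{\by\in\mfd}\bigl|\vdh{n}(\by,\bx)\bigr|\le c\,n^{d}$, which is the asserted bound since $d(1-1/\infty)=d$.

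For $p=1$ the idea is to integrate the localization bound over a decomposition of $\mfd$ into geodesic annuli centred at $\bx$. Fix $\bx\in\mfd$ and set $A_{0}:=\ball{\bx,1/n}$ and $A_{k}:=\ball{\bx,(k+1)/n}\setminus\ball{\bx,k/n}$ for $k\ge1$; these cover $\mfd$, and since $\mfd$ is compact only finitely many are nonempty. On $A_{k}$ one has $n\dist{\bx,\by}\ge k$, so $(1+n\dist{\bx,\by})^{-\fis}\le(1+k)^{-\fis}$, while Assumption~\ref{assump:ball.vol} together with $(k+1)^{d}-k^{d}\le d\,(1+k)^{d-1}$ gives $\memf(A_{k})\le C\,(1+k)^{d-1}n^{-d}$. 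Summing \eqref{eq:fiker.localisation} over $k$,
\begin{equation*}
\int_{\mfd}\bigl|\vdh{n}(\by,\bx)\bigr|\dmf{y}\le\sum_{k\ge0}\frac{c\,n^{d}}{(1+k)^{\fis}}\,\memf(A_{k})\le C\sum_{k\ge0}(1+k)^{d-1-\fis},
\end{equation*}
and the series converges because the hypothesis $\fis\ge d+1$ forces $d-1-\fis\le-2<-1$; its value depends only on $d,\fiH,\fis$. Hence $\normb{\vdh{n}(\cdot,\bx)}{\Lpm{1}}\le C$, uniformly in $\bx$ since the constants in Lemma~\ref{lem:localization} and Assumption~\ref{assump:ball.vol} are.

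The intermediate range $1<p<\infty$ then follows from the two endpoint cases by the elementary interpolation inequality $\normb{g}{\Lpm{p}}\le\normb{g}{\Lpm{1}}^{1/p}\normb{g}{\Lpm{\infty}}^{1-1/p}$ applied to $g=\vdh{n}(\cdot,\bx)$, which yields $\normb{\vdh{n}(\cdot,\bx)}{\Lpm{p}}\le C^{1/p}(c\,n^{d})^{1-1/p}=c'\,n^{d(1-1/p)}$. Equivalently, one can treat all $p\in[1,\infty)$ in one stroke by raising \eqref{eq:fiker.localisation} to the $p$-th power before summing over the annuli, which produces the series $\sum_{k}(1+k)^{d-1-\fis p}$, convergent since $\fis p\ge\fis\ge d+1>d$.

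I do not expect any serious obstacle here. The only point that genuinely uses the smoothness hypothesis $\fis\ge d+1$ is the convergence of the tail of the annular series; everything else is routine bookkeeping. Worth noting is that one must use the sharp annulus estimate $(k+1)^{d}-k^{d}\le d(1+k)^{d-1}$ rather than the cruder $\le(1+k)^{d}$, since the latter would only give convergence for $\fis>d+1$; with the sharp bound the argument works all the way down to $\fis=d+1$, matching the hypothesis of Lemma~\ref{lem:localization}.
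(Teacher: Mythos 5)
Your proof is correct and follows exactly the route the paper intends: the paper does not write out a proof of Lemma~\ref{lem:fikerL1} but simply asserts that it follows from the localization estimate \eqref{eq:fiker.localisation} of Lemma~\ref{lem:localization} (citing Maggioni--Mhaskar), and your annular decomposition with the volume bound of Assumption~\ref{assump:ball.vol}, followed by the $L_1$--$L_\infty$ interpolation $\normb{g}{\Lpm{p}}\le\normb{g}{\Lpm{1}}^{1/p}\normb{g}{\Lpm{\infty}}^{1-1/p}$, is precisely the standard derivation being invoked. Your remark that the sharp estimate $(k+1)^{d}-k^{d}\le d(1+k)^{d-1}$ is what makes the argument work at the endpoint $\fis=d+1$ is a correct and worthwhile observation.
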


Using the interpolation theorem with \eqref{eq:fiapprox} gives
\begin{equation*}
    \norm{\fiapprox{n}(f)}{\Lpm{p}} \le \max_{\bx\in\mfd}\norm{K_{n}(\cdot,\bx)}{\Lpm{1}}\:\norm{f}{\Lpm{p}}.
\end{equation*}
This with Lemma~\ref{lem:fikerL1} implies the following boundedness of the filtered approximation on $\Lpm{p}$.
\begin{theorem}\label{thm:fiapprox.Bd} Let $d\ge1$, $1\le p\le\infty$. Let $\mfd$ be a compact Riemannian manifold of dimension $d$. Let $\fiH$ be a filter in $\CkRp$ with $\fis\ge d+1$. Then for $n\geq1$, the operator norm of $\fiapprox{n}$ on $\Lpm{p}$
\begin{equation*}
    \norm{\fiapprox{n}}{\ptop} \le c,
\end{equation*}
where the constant $c$ depends only on $d,\fiH$ and $\fis$.
\end{theorem}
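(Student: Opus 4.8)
The plan is to regard $\fiapprox{n}$ as the integral operator with kernel $\vdh{n}(\cdot,\cdot)$, bound its operator norm at the two endpoints $p=1$ and $p=\infty$ directly, and then obtain the bound for all intermediate $p$ by the Riesz--Thorin interpolation theorem (equivalently, Schur's test for integral operators). The quantity controlling everything is
\[
  A_n := \sup_{\bx\in\mfd}\normb{\vdh{n}(\cdot,\bx)}{\Lpm{1}},
\]
which is exactly what Lemma~\ref{lem:fikerL1}, specialized to $p=1$, bounds: $A_n\le c$ for all $n\ge1$, with $c$ depending only on $d,\fiH,\fis$ (this is where the smoothness assumption $\fis\ge d+1$ enters, via the localization estimate \eqref{eq:fiker.localisation}). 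As a preliminary remark, $\fiapprox{n}$ is well defined on each $\Lpm{p}$: since $\mu(\mfd)=1$ one has $\Lpm{p}\subseteq\Lpm{1}$, and \eqref{eq:fiker.localisation} shows $\vdh{n}(\bx,\cdot)\in\Lpm{\infty}\subseteq\Lpm{1}$, so the convolution integral in \eqref{eq:fiapprox} converges absolutely.

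The endpoint estimates are routine. On $\Lpm{\infty}$, the pointwise bound
\[
  |\fiapprox{n}(f;\bx)|\le\int_{\mfd}|\vdh{n}(\bx,z)|\,|f(z)|\dmf{z}\le\normb{\vdh{n}(\bx,\cdot)}{\Lpm{1}}\,\norm{f}{\Lpm{\infty}}
\]
followed by taking the supremum over $\bx$ gives $\norm{\fiapprox{n}(f)}{\Lpm{\infty}}\le B_n\,\norm{f}{\Lpm{\infty}}$ with $B_n:=\sup_{\bx}\normb{\vdh{n}(\bx,\cdot)}{\Lpm{1}}$. On $\Lpm{1}$, Tonelli's theorem gives
\[
  \norm{\fiapprox{n}(f)}{\Lpm{1}}\le\int_{\mfd}|f(z)|\,\normb{\vdh{n}(\cdot,z)}{\Lpm{1}}\dmf{z}\le A_n\,\norm{f}{\Lpm{1}}.
\]
Since $\vdh{n}(\bx,\by)=\conj{\vdh{n}(\by,\bx)}$ by \eqref{eq:fiker}, taking absolute values shows $A_n=B_n$, so both endpoint operator norms are at most $A_n$.

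The Riesz--Thorin theorem, applied to the linear operator $\fiapprox{n}$ acting between $\Lpm{1}$ and $\Lpm{\infty}$, then yields, for every $1\le p\le\infty$,
\[
  \norm{\fiapprox{n}}{\ptop}\le A_n^{1/p}\,A_n^{1-1/p}=A_n,
\]
which is precisely the inequality $\norm{\fiapprox{n}(f)}{\Lpm{p}}\le\max_{\bx\in\mfd}\norm{\vdh{n}(\cdot,\bx)}{\Lpm{1}}\,\norm{f}{\Lpm{p}}$ displayed just before the theorem. Combining this with the bound $A_n\le c$ from Lemma~\ref{lem:fikerL1} finishes the proof. I expect essentially no obstacle at this level: the genuine analytic work is already packaged in Lemma~\ref{lem:fikerL1} (and, beneath it, the kernel localization of Lemma~\ref{lem:localization}), and what remains is only the standard measure-theoretic bookkeeping (absolute convergence of the convolution, Tonelli's theorem, $\mu(\mfd)=1$) together with the observation that the Hermitian symmetry of $\vdh{n}$ makes the two endpoint constants coincide, so that the interpolated bound is the single constant $A_n$ rather than a product of two a priori different kernel norms.
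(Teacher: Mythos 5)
Your argument is correct and is essentially the paper's own proof: the paper obtains the displayed bound $\norm{\fiapprox{n}(f)}{\Lpm{p}} \le \max_{\bx\in\mfd}\norm{K_{n}(\cdot,\bx)}{\Lpm{1}}\norm{f}{\Lpm{p}}$ by exactly this interpolation between the $\Lpm{1}$ and $\Lpm{\infty}$ endpoints and then invokes Lemma~\ref{lem:fikerL1} with $p=1$. You have merely written out the details (Tonelli at $p=1$, the pointwise bound at $p=\infty$, the Hermitian symmetry identifying the two endpoint constants, and Riesz--Thorin) that the paper compresses into the phrase ``using the interpolation theorem.''
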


\paragraph{Polynomial space and best approximation} Let $\Pi_n := \operatorname{span}\{\phi_1,\ldots, \phi_n\}$ be the \emph{(diffusion) polynomial space} of degree $n$ on manifold $\mfd$. Given $1\le p\le\infty$ and $n\in \N$, let $\bestapprox{n}{f}:=E_{n}(\Lpm{p};f):=\inf\bigl\{\normb{f-P}{\Lpm{p}} | P\in \polyspm\bigr\}$ be the \emph{best approximation} of degree $n$ for $f\in\Lpm{p}$. 
Since $\cup_{n=0}^{\infty}\polyspm$ is dense in $\Lpm{p}$, $\bestapprox{n}{f}$ goes to zero as $n\to\infty$.

The following theorem proves the convergence error for the filtered approximation of $f\in\Lpm{p}$.
\begin{theorem}\label{thm:fiapprox.err.Lp} Let $d\ge1$, $1\le p\le\infty$ and $\mfd$ be a compact Riemannian manifold of dimension $d$. Let $\fiapprox{n}$ be the filtered approximation with filter $\fiH$ given by Definition~\ref{def:fiH} satisfying $\fis\ge d+1$. Then, for $f\in\Lpm{p}$ and $n\in\Nz$,
\begin{equation*}
    \normb{f-\fiapprox{n}(f)}{\Lpm{p}} \le c\: \bestapprox{n}{f},
\end{equation*}
where the constant $c$ depends only on $d$, $\fiH$ and $\fis$.
\end{theorem}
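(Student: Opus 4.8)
The plan is to exploit the reproducing property of the filter together with the boundedness established in Theorem~\ref{thm:fiapprox.Bd}. First I would observe that if $P\in\polyspm$ is any polynomial of degree $n$, then every eigenfunction $\eigfm$ appearing in its expansion has $\eigvm\le n$, so $\fiH(\eigvm/n)=1$ by the defining property $\fiH(t)=1$ for $0\le t\le1$ in Definition~\ref{def:fiH}. Hence $\fiapprox{n}(P)=\sum_{\ell\colon\eigvm\le n}\fiH(\eigvm/n)\Fcoem{P}\eigfm=\sum_{\ell\colon\eigvm\le n}\Fcoem{P}\eigfm=P$; that is, $\fiapprox{n}$ reproduces polynomials of degree $n$ exactly. (For $n=0$ the operator is the integral against the constant $\eigfm[0]=1$, which reproduces constants, and the statement is checked directly.)

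Next I would invoke the standard near-best-approximation argument. Let $P^\ast\in\polyspm$ be a polynomial realizing (or nearly realizing, up to an arbitrarily small margin) the best approximation $\bestapprox{n}{f}$, so $\normb{f-P^\ast}{\Lpm{p}}\le\bestapprox{n}{f}+\varepsilon$ for any $\varepsilon>0$. Then, using linearity of $\fiapprox{n}$ and the reproducing property $\fiapprox{n}(P^\ast)=P^\ast$, I would write
\begin{equation*}
  f-\fiapprox{n}(f) = (f-P^\ast) - \fiapprox{n}(f-P^\ast),
\end{equation*}
and apply the triangle inequality in $\Lpm{p}$ to get
\begin{equation*}
  \normb{f-\fiapprox{n}(f)}{\Lpm{p}} \le \normb{f-P^\ast}{\Lpm{p}} + \normb{\fiapprox{n}(f-P^\ast)}{\Lpm{p}} \le \bigl(1+\norm{\fiapprox{n}}{\ptop}\bigr)\normb{f-P^\ast}{\Lpm{p}}.
\end{equation*}
By Theorem~\ref{thm:fiapprox.Bd}, $\norm{\fiapprox{n}}{\ptop}\le c$ with $c$ depending only on $d,\fiH,\fis$ (this is where the hypothesis $\fis\ge d+1$ is used, inherited from Lemma~\ref{lem:fikerL1}). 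Therefore $\normb{f-\fiapprox{n}(f)}{\Lpm{p}}\le(1+c)(\bestapprox{n}{f}+\varepsilon)$, and letting $\varepsilon\to0$ yields the claim with constant $1+c$.

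The argument is essentially routine once the two ingredients are in place, so there is no deep obstacle; the only point requiring a little care is the justification that $\fiapprox{n}$ acts on $f-P^\ast\in\Lpm{p}$ and that the operator-norm bound of Theorem~\ref{thm:fiapprox.Bd} applies verbatim to this difference — but this is immediate since $f-P^\ast\in\Lpm{p}$ and the bound is for the operator on all of $\Lpm{p}$. One should also note that when $P^\ast$ realizing the infimum does not exist (if one wishes to avoid compactness arguments in $\polyspm$), the $\varepsilon$-argument above already circumvents the issue, so the statement holds in full generality for all $1\le p\le\infty$.
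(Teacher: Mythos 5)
Your proposal is correct and follows essentially the same route as the paper's proof: both establish that $\fiapprox{n}$ reproduces $\polyspm$ (the paper via Eq.~\eqref{eq:fiapprox.reproduce.poly} inside Lemma~\ref{lm:fihyper.reproduce.p}, you directly from $\fiH\equiv1$ on $[0,1]$), then apply the triangle inequality and the operator-norm bound of Theorem~\ref{thm:fiapprox.Bd} to an (almost) best approximant. The only cosmetic difference is that the paper takes the infimum over an arbitrary $P\in\polyspm$ at the end rather than running your explicit $\varepsilon$-argument; the two are equivalent.
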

\begin{remark}
For $L_p([0,1])$, the case of filtered approximation with an appropriate filter reduces to a classic result of de la Vall\'{e}e-Poussin approximation \citep{vallee1919lecons}. 
\cite{stein1957interpolation} proved in a general context the convergence of de La Vall\'{e}e-Pousson approximation to the target function.
The sphere case of Theorem~\ref{thm:fiapprox.err.Lp} was proved by \cite{rustamov1994on,LeMh2008,sloan2011polynomial}.
\end{remark}

The following lemma gives the convergence error of the best approximation for $f\in\sobm{p}{s}$, see \cite{MaMh2008}.
\begin{lemma}\label{lem:best.approx.sob.mfd} Let $d\ge1$, $1\le p\le \infty$, $s>0$, and $\mfd$ be a compact Riemannian manifold of dimension $d$. For $f\in \sobm{p}{s}$ and $n\in\N$,
\begin{equation*}
    \bestapprox{n}{f} \le c\: n^{-s}\:\norm{f}{\sobm{p}{s}},
\end{equation*}
where the constant $c$ depends only on $d$, $p$ and $s$.
\end{lemma}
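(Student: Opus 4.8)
The plan is to build an explicit near-best polynomial of degree $n$ from a dyadic (Littlewood--Paley / needlet-type) decomposition of the filtered approximation operators, and then to bound the resulting truncation tail by the Sobolev smoothness of $f$. First I would fix, once and for all, a filter $\fiH$ as in Definition~\ref{def:fiH} with $\fis\ge d+1$ (such a filter exists and depends only on $d$), and set $\sigma_j:=\fiapprox{2^{j}}-\fiapprox{2^{j-1}}$ for $j\in\N$ and $\sigma_0:=\fiapprox{1}$. By \eqref{eq:fiker} each $\sigma_j$ is the Fourier multiplier operator $\sigma_j(f)=\sum_{\ell}\eta_j(\eigvm)\Fcoem{f}\,\eigfm$ with symbol $\eta_j(\lambda)=\fiH(\lambda/2^{j})-\fiH(\lambda/2^{j-1})$ for $j\ge1$, which is supported in the spectral band $\{2^{j-1}\le\lambda\le 2^{j+1}\}$; since $\fiH\equiv 1$ on $[0,1]$ these symbols sum to $1$, so $\sum_{j\ge0}\sigma_j(f)=f$ with convergence in $\Lpm{p}$ (the partial sums are $\fiapprox{2^{J}}(f)$, which converge to $f$ by Theorem~\ref{thm:fiapprox.err.Lp} and the density of $\bigcup_n\polyspm$ in $\Lpm{p}$). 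Two facts will be used repeatedly: $\norm{\sigma_j}{\ptop}\le c$ uniformly in $j$ (Theorem~\ref{thm:fiapprox.Bd}), and $\sigma_j(f)\in\polyspm[2^{j+1}]$ because $\operatorname{supp}\fiH\subseteq[0,2]$.

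Given $n\ge2$, let $J$ be the largest integer with $2^{J+1}\le n$, so $2^{J}\ge n/4$. Then $P_n:=\sum_{j=0}^{J}\sigma_j(f)$ lies in $\polyspm[2^{J+1}]\subseteq\polyspm$, so that
\[
 \bestapprox{n}{f}\ \le\ \normB{f-P_n}{\Lpm{p}}\ =\ \normB{\sum_{j>J}\sigma_j(f)}{\Lpm{p}}\ \le\ \sum_{j>J}\normb{\sigma_j(f)}{\Lpm{p}}
\]
(the case $n=1$ follows from the case $n=2$ after enlarging $c$, as $\bestapprox{n}{f}$ is nonincreasing in $n$). This reduces everything to the single-block estimate $\normb{\sigma_j(f)}{\Lpm{p}}\le c\,2^{-js}\norm{f}{\sobm{p}{s}}$, $j\in\N$: granting it, summing the geometric series gives $\bestapprox{n}{f}\le c\,\norm{f}{\sobm{p}{s}}\sum_{j>J}2^{-js}\le c\,2^{-Js}\norm{f}{\sobm{p}{s}}\le c\,n^{-s}\norm{f}{\sobm{p}{s}}$ with $c$ depending only on $d,p,s$, which is the claim.

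For the single-block estimate I would pass to the Sobolev ``potential'' of $f$: let $g\in\Lpm{p}$ be the function realizing $\norm{g}{\Lpm{p}}=\norm{f}{\sobm{p}{s}}$ in the definition of $\sobm{p}{s}$, so that $\Fcoem{f}=(1+\eigvm)^{-s}\Fcoem{g}$ and $\sigma_j(f)$ is the Fourier multiplier transform of $g$ with symbol $m_j(\lambda)=\eta_j(\lambda)(1+\lambda)^{-s}$, which is supported in $\lambda\asymp 2^{j}$ and of size $\asymp 2^{-js}$ there. Writing $m_j(\lambda)=2^{-js}b(\lambda/2^{j})$ with $b$ a fixed $C^{\fis}$ function supported in $[\tfrac{1}{2},2]$, the kernel of $\sigma_j$ (restricted this way) is $2^{-js}$ times a rescaled smooth-filter kernel, and its $\Lpm{p}\to\Lpm{p}$ operator norm is $\le c\,2^{-js}$ by the same localization/rescaling mechanism that underlies Lemma~\ref{lem:localization}, Lemma~\ref{lem:fikerL1} and Theorem~\ref{thm:fiapprox.Bd} (equivalently, $\sigma_j=\sigma_j\circ\widetilde\sigma_j$ for a slightly fattened block $\widetilde\sigma_j$, and one reuses Theorem~\ref{thm:fiapprox.Bd}). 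I expect this single-block estimate to be the main obstacle: on $\Lpm{2}$ it is immediate from Parseval, but for general $1\le p\le\infty$ it genuinely needs the kernel-localization estimates above; a smaller subtlety is the $\Lpm{p}$-convergence of the telescoping series when $p=\infty$, which is handled using $\Lpm{\infty}=\contm$, Theorem~\ref{thm:fiapprox.err.Lp}, and the density of polynomials in $\contm$.
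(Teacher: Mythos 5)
Your proposal addresses a statement the paper does not actually prove: Lemma~\ref{lem:best.approx.sob.mfd} is imported from \cite{MaMh2008} with no argument given, so there is no in-paper proof to compare against. Your Littlewood--Paley construction --- truncate the telescoping series of filtered approximations at the largest dyadic level below $n$, observe that the partial sum $\fiapprox{2^{J}}(f)$ is a polynomial of degree $2^{J+1}\le n$, and bound the tail block by block --- is the standard route to this Jackson-type estimate in the cited literature, and the skeleton is sound and non-circular: you correctly avoid Theorem~\ref{thm:fiapprox.err.Wp} and Lemma~\ref{lm:contrib.decomp.nrm.BD}, whose proofs in the appendix are themselves derived from the very lemma you are proving, and the tools you do invoke (Theorems~\ref{thm:fiapprox.Bd} and \ref{thm:fiapprox.err.Lp}, Lemmas~\ref{lem:localization} and \ref{lem:fikerL1}) do not depend on it.

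Two caveats on the single step you yourself flag as the obstacle. First, the symbol in $m_j(\lambda)=2^{-js}b(\lambda/2^{j})$ is not one fixed function: writing $\lambda=2^{j}t$ gives $b_j(t)=\bigl(\fiH(t)-\fiH(2t)\bigr)(2^{-j}+t)^{-s}$, which depends on $j$; what is true, and suffices, is that $\{b_j\}_j$ is uniformly bounded in $C^{\fis}$ with support in $[1/2,2]$. Second, and more substantively, the uniform $\Lpm{p}\to\Lpm{p}$ bound for the multiplier operators with symbols $b_j(\cdot/2^{j})$ does not follow from Lemma~\ref{lem:localization} or Theorem~\ref{thm:fiapprox.Bd} as literally stated, since those are formulated only for filters that equal $1$ on $[0,1]$ and vanish beyond $2$; you need the localization estimate for arbitrary compactly supported $C^{\fis}$ symbols with constants controlled by the $C^{\fis}$ norm, which is what \citet[Theorem~4.1]{MaMh2008} actually provides but the paper does not restate. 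Your parenthetical alternative $\sigma_j=\sigma_j\circ\widetilde\sigma_j$ does not by itself repair this, because the factor carrying the $2^{-js}$ gain is still a non-filter multiplier. Finally, note that with the paper's literal normalization $\norm{f}{\sobm{p}{s}}=\normb{\sum_{\ell}(1+\eigvm)^{s/2}\Fcoem{f}\,\eigfm}{\Lpm{p}}$ and $\LBm\eigfm=-\eigvm^{2}\eigfm$, your substitution $\Fcoem{f}=(1+\eigvm)^{-s}\Fcoem{g}$ uses the exponent $s$ rather than $s/2$; this is what is needed to land on $n^{-s}$ and matches the convention of \cite{MaMh2008}, but it silently corrects an inconsistency in the paper's own definition rather than following from it.
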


Theorem~\ref{thm:fiapprox.err.Lp} and Lemma~\ref{lem:best.approx.sob.mfd} imply the following convergence order for the filtered approximation of a smooth function on a compact Riemannian manifold.
\begin{theorem}\label{thm:fiapprox.err.Wp} Let $d\ge1$, $1\le p\le\infty$ and $\mfd$ be a compact Riemannian manifold of dimension $d$. Let $\fiapprox{n}$ be the filtered approximation with filter $\fiH$ given by Definition~\ref{def:fiH} satisfying $\fis\ge d+1$. Then, for $f\in\sobm{p}{s}$ and $n\in\N$,
\begin{equation*}
    \normb{f-\fihyper{n}(f)}{\Lpm{p}} \le c\: n^{-s}\norm{f}{\sobm{p}{s}},
\end{equation*}
where the constant $c$ depends only on $d$, $p$, $s$, $\fiH$ and $\fis$.
\end{theorem}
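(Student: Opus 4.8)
The plan is to obtain this estimate as an immediate consequence of the two preceding results, with no new analysis required. First I would apply Theorem~\ref{thm:fiapprox.err.Lp}: its hypotheses — $\mfd$ a compact Riemannian manifold of dimension $d$, and $\fiH$ a filter as in Definition~\ref{def:fiH} with $\fis\ge d+1$ — are exactly the ones assumed here, so for every $f\in\Lpm{p}$ and every $n\in\Nz$,
\[
  \normb{f-\fiapprox{n}(f)}{\Lpm{p}} \le c_1\,\bestapprox{n}{f},
\]
where $c_1$ depends only on $d$, $\fiH$, and $\fis$. In particular this applies to $f$ in the Sobolev space $\sobm{p}{s}\subseteq\Lpm{p}$, which is where the target function lives; only membership in $\Lpm{p}$ is needed here, though $\sobm{p}{s}$ in fact embeds into $\contm$ when $s>d/p$ by Lemma~\ref{lm:embed.sob.C}.

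Second, I would estimate the best-approximation term on the right-hand side by invoking Lemma~\ref{lem:best.approx.sob.mfd}: for $f\in\sobm{p}{s}$ and $n\in\N$,
\[
  \bestapprox{n}{f} \le c_2\, n^{-s}\,\norm{f}{\sobm{p}{s}},
\]
with $c_2$ depending only on $d$, $p$, and $s$. Chaining the two displays and putting $c:=c_1 c_2$ then gives $\normb{f-\fiapprox{n}(f)}{\Lpm{p}}\le c\,n^{-s}\norm{f}{\sobm{p}{s}}$, with $c$ depending only on $d$, $p$, $s$, $\fiH$, and $\fis$ — exactly the claimed bound (the operator $\fihyper{n}$ in the statement is the filtered approximation $\fiapprox{n}$ of Definition~\ref{defn:fiapp}), and restricting to $n\in\N$ matches the range in Lemma~\ref{lem:best.approx.sob.mfd}.

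Because both ingredients are already available, there is no genuine obstacle; the only points needing care are verifying that the filter-smoothness condition $\fis\ge d+1$ transfers unchanged from Theorem~\ref{thm:fiapprox.err.Lp} (it does), and the bookkeeping that keeps the final constant independent of $f$ and $n$. Should one wish to avoid citing Theorem~\ref{thm:fiapprox.err.Lp}, the same conclusion follows directly: choose a near-best polynomial $P\in\polyspm$ with $\normb{f-P}{\Lpm{p}}\le 2\,\bestapprox{n}{f}$; use $\fiH\equiv 1$ on $[0,1]$ to get the reproducing identity $\fiapprox{n}(P)=P$; write $f-\fiapprox{n}(f)=(f-P)-\fiapprox{n}(f-P)$; apply the operator norm bound $\norm{\fiapprox{n}}{\ptop}\le c$ of Theorem~\ref{thm:fiapprox.Bd} to obtain $\normb{f-\fiapprox{n}(f)}{\Lpm{p}}\le(1+c)\normb{f-P}{\Lpm{p}}$; and finish with Lemma~\ref{lem:best.approx.sob.mfd}. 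This is essentially the proof of Theorem~\ref{thm:fiapprox.err.Lp} re-run, though, so the two-line combination is the route I would actually take.
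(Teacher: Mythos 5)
Your proof is correct and is exactly the paper's argument: the paper states Theorem~\ref{thm:fiapprox.err.Wp} as an immediate consequence of chaining Theorem~\ref{thm:fiapprox.err.Lp} with Lemma~\ref{lem:best.approx.sob.mfd}, which is precisely your two-line combination. The constant bookkeeping and the remark identifying $\fihyper{n}$ with $\fiapprox{n}$ are both right, so nothing further is needed.
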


In the following Sections~\ref{sec:ndfh} and \ref{sec:dfh}, we will study the non-distributed and distributed filtered hyperinterpolation's which use single and multiple servers to find a global estimator respectively. For both non-distributed and distributed learning by filtered hyperinterpolation, we need to take account of the data type (noise or noiseless) and the quadrature point type (deterministic or random). There are in total 8 cases for which we have to treat separately.

\section{Non-distributed filtered hyperinterpolation on manifolds}
\label{sec:ndfh}
In this section, we study the non-distributed version of filtered hyperinterpolation (NDFH) on a manifold. 
We consider the cases when the data is either clean or noisy, and the input samples are either deterministic or random. It turns out that the NDFH for clean data achieves the optimal convergence order of the approximation error, while noise on the data would reduce the convergence order. 

Filtered hyperinterpolation is a special type of regression, and the primary tool that we will use. 
Within this approach, as introduced in Definition~\ref{defn:fiapp}, a target function $f^\ast$ is approximated by the filtered polynomial approximation
\begin{equation}\label{eq:fihyper1}
\sum_{\ell=0}^{\infty} H(\lambda_{\ell}/n) \hfs_{\ell}  \phi_{\ell}(\bx). 
\end{equation}
Here $H$ is a filter for the eigenvalues $\lambda_{\ell}$ to eigenfunctions $\phi_{\ell}$, and $\hfs_{\ell}$ are the Fourier coefficients. 
The Fourier coefficients cannot be computed in practice, because they would require to integrate the unknown target function. Instead, they are estimated from samples. This estimation is conducted via a quadrature formula, 
\begin{equation*}
    \hfs_{\ell} = \langle f^*, 
    \phi_{\ell}\rangle = \int_{\mathcal{M}} f^*(\by) \overline{\phi_{\ell}(\by)} \dmf{y} \approx \sum_{i=1}^N w_i f^*(\bx_i)\overline{\phi_{\ell}(\bx_i)}.
\end{equation*} 
We rewrite \eqref{eq:fihyper1} as
\begin{equation*}
\sum_{\ell=0}^{\infty} H(\lambda_{\ell}/n) \hfs_{\ell}  \phi_{\ell}(\bx)
\approx
\sum_{\ell=0}^{\infty} H(\lambda_{\ell}/n) \phi_{\ell}(\bx)\sum_{i=1}^N w_i f^*(\bx_i)\overline{\phi_{\ell}(\bx_i)}.
\end{equation*}
After rearranging, our approximation takes the form
\begin{equation}\label{eq:fihyper2}
\sum_{i=1}^N w_i f^*(\bx_i) K_n(\bx_i,\bx), 
\end{equation}
which is a weighted sum of kernels $K_n(\bx_i,\bx)=\sum_{\ell=0}^{\infty}H(\lambda_\ell/n)\overline{\phi_{\ell}(\bx_i)}\phi_\ell(\bx)$ centered at the data locations $\bx_i$. In practice, the estimator of \eqref{eq:fihyper2} is scaled by the observed values $y_i$ instead of $f^*(\bx_i)$. 

In the following, we define the \emph{(non-distributed) filtered hyperinterpolation (approximation)} on a compact Riemannian manifold $\mfd$ for a data set $D$. 
Besides the traditional deterministic quadrature rule, we also consider the filtered hyperinterpolation with random quadrature rule where the quadrature points are distributed with some probability measure on the manifold. We first introduce some notion about data and quadrature rule. 

\paragraph{Data} Let $\mfd$ be a compact Riemannian manifold of dimension $d$ for $d\geq1$. 
A \emph{data set} $D=\{(\bx_i,y_i)\}_{i=1}^{N}$, $N=|D|$ on the manifold
$\mfd$ is a set of pairs of points $\Lambda_{D}:=\{\mathbf
x_i\}_{i=1}^{N}$ on the manifold and real numbers $y_i$. 
Elements of
$D$ are called \emph{data points}. 
The points $\bx_i$ of
$\Lambda_{D}$ are called \emph{input samples}. 
The $y_i$ are called \emph{data values}. A continuous function $f^*$ on the manifold is called an (ideal) \emph{target function} for data $D$ if 
\begin{equation}\label{eq:datavalue}
    y_i=f^*(\bx_i)+\epsilon_i,\quad i=1,\dots,|D|
\end{equation}
for \emph{noises} $\epsilon_i$.
\paragraph{Deterministic and random sampling} In this paper, we consider two types of input samples depending on whether they are randomly sampled: 
the \emph{deterministic sampling} and \emph{random sampling}. The data $D$ has random sampling if $\bx_i$ are randomly chosen with respect some probability measure on $\mfd$. 
In contrast, $D$ has deterministic sampling if the $\bx_i$  are fixed. 
\paragraph{Noisy and noiseless data} We also distinguish data types by its data values $y_i$. We say $D$ is \emph{noiseless data or clean data} if $y_i$ is equal to the function value of the associated (ideal) target function value $f^*(\bx_i)$ (that is, the noises $\epsilon_i\equiv0$). We say $D$ is \emph{noisy data} if the noises $\epsilon_i$ in \eqref{eq:datavalue} are non-zero.

\paragraph{Quadrature rule}
A set 
\begin{equation*}
	\QH=\{(\wH,\pH{i})|\wH\in\Rone, \pH{i}\in\mfd, i=1,\dots,N\}
\end{equation*}
is said to be a \emph{quadrature rule} for numerical integration on $\mfd$. We say $\QH$ is a \emph{positive quadrature rule} if all weights $\wH>0$, $i=1,\dots,N$. In this paper, we only consider positive quadrature rules. 

\begin{definition}[Non-distributed filtered hyperinterpolation]\label{defn:nondfh_det}
Let $D=\{(\bx_i,y_i)\}_{i=1}^{|D|}$ be a data set on compact Riemannian manifold $\mfd$, $\QH=\{(\wH,\bx_{i})\}_{i=1}^{|D|}$ a positive quadrature rule on $\mfd$ and $\fiH$ be a filter in Definition~\ref{def:fil.fil.ker} on $\Rplus$. For $n\in \N$, the non-distributed filtered hyperinterpolation (NDFH) for data $D$ and quadrature rule $\QH$ is 
\begin{equation}\label{eq:VDn}
    \fihyper{D,n}(\bx) := \fihyper{D,n,\fiH,\QH}(\bx) 
    :=\sum_{i=1}^{|D|} \wH y_i\vdh{n,\fiH}(\bx,\bx_i).
\end{equation}
If we let $D^*:=D^*(f^*):=\{(\bx_i,f^*(\bx_i))\}_{i=1}^N$ be the noiseless data for the ideal target function $f^*$ and data $D$, then \eqref{eq:VDn} becomes
\begin{equation}\label{eq:VDnf}
    \fihyper{D^*,n}(\bx):=\fihyper{D^*,n}(f^*,\bx) := \sum_{i=1}^{|D|} \wH f^*(\bx_i)\vdh{n,\fiH}(\bx,\bx_i).
\end{equation}
\end{definition}
We call $\fihyper{D^*,n}(f^*)$ \emph{non-distributed filtered hyperinterpolation (NDFH) for clean data set or for quadrature rule $\QH$}, for the function $f^*$.
\begin{remark}
Non-distributed filtered hyperinterpolation on the sphere was studied by \cite{SlWo2012}.
\end{remark}

\subsection{Non-distributed filtered hyperinterpolation for clean data}\label{sec:ndfh_clean}
We first assume that we have a quadrature rule that has polynomial exactness. That is, the weighted sum by the quadrature rule can recover the integral for polynomials on manifolds. 
The non-distributed filtered hyperinterpolation with polynomial-exact quadrature rule can reach the same optimal convergence order as the filtered approximation in Section~\ref{sec:fiapprox}, and the convergence rate is optimal.

Let $\ell\in\Nz$. A positive quadrature rule $\QH:=\gQ:=\{(\wH,\pH{i})\}_{i=1}^{N}$ on $\mfd$ is said to be \emph{exact} for degree $\ell$ if for all polynomials $P\in\polyspm[\ell]$,
\begin{equation*}
    \int_{\mfd}P(\bx)\dmf{x} =  \sum_{i=1}^{N} \wH P(\pH{i}).
\end{equation*}
That the quadrature is exact for polynomials is a strong assumption, as the optimal-order number of points is $\bigo{}{N^d}$ in typical examples of manifolds, see e.g. \cite{HeSlWo2010,Cools2003}.

The following lemma shows that the filtered hyperinterpolation $\fihyper{D,n}$ with filter $\fiH$ given by Definition~\ref{def:fiH} reproduces polynomials of degree up to $n$ if the associated quadrature rule $\QH$ is exact for degree $3n-1$.
\begin{lemma}\label{lm:fihyper.reproduce.p} Let $n\in\Nz$ and $\mfd$ be a $d$-dimensional compact Riemannian manifold. Let $\QH := \{(\wH,\pH{i})\}_{i=1}^{N}$ be a positive quadrature rule on $\mfd$ exact for polynomials of degree up to $3n-1$ and let $\fihyper{D^*,n}$ be a non-distributed filtered hyperinterpolation on $\mfd$ for quadrature rule $\QH$ with filter $\fiH$ given by Definition~\ref{def:fiH}. Then,
\begin{equation*}
    \fihyper{D^*,n}(P) = P,\quad P\in\polyspm.
\end{equation*}
\end{lemma}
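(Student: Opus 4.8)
The plan is to expand $\fihyper{D^*,n}(P)$ in the eigenfunction basis and use the polynomial exactness of $\QH$ to replace the quadrature sum by an exact integral, after which the filter reproduces $P$. First I would write $P=\sum_{\ell:\eigvm\le n}\Fcoem{P}\eigfm$, so that it suffices, by linearity, to prove the claim for each basis function $P=\eigfm[k]$ with $\eigvm[k]\le n$. For such a $P$ we have, by \eqref{eq:VDnf} and \eqref{eq:fiker},
\begin{equation*}
 \fihyper{D^*,n}(\eigfm[k];\bx)
 =\sum_{i=1}^{N}\wH\,\eigfm[k](\pH{i})\sum_{\ell=0}^{\infty}\fiH\!\Bigl(\frac{\eigvm}{n}\Bigr)\conj{\eigfm(\pH{i})}\,\eigfm(\bx)
 =\sum_{\ell=0}^{\infty}\fiH\!\Bigl(\frac{\eigvm}{n}\Bigr)\eigfm(\bx)\sum_{i=1}^{N}\wH\,\eigfm[k](\pH{i})\conj{\eigfm(\pH{i})}.
\end{equation*}
The key observation is that, because the filter $\fiH$ is supported in $[0,2]$, only terms with $\eigvm\le 2n$ survive; for these, $\eigfm[k]\conj{\eigfm}$ is (a linear combination of eigenfunctions forming) a polynomial of degree $\eigvm[k]+\eigvm\le n+2n=3n$ by Assumption~\ref{assump:poly}. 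That is slightly too large for the exactness hypothesis as stated, so the sharper point to use is that $\fiH(\eigvm/n)=0$ once $\eigvm\ge 2n$, hence the surviving indices satisfy $\eigvm<2n$, i.e. $\eigvm\le 2n-1$ in the discrete spectrum — wait, that still needs care — the clean way is: whenever $\fiH(\eigvm/n)\neq 0$ and $\fiH(\eigvm[k]/n)\neq0$ we may as well note $\eigvm[k]\le n$ gives $\eigvm[k]+\eigvm\le n+2n$, but to land inside degree $3n-1$ I would instead invoke that the product $\eigfm[k]\conj{\eigfm}\in\polyspm[\eigvm[k]+\eigvm]$ and that the relevant sum only involves $\eigvm$ strictly below $2n$; combined with $\eigvm[k]\le n$ this yields total degree $\le 3n-1$, so the quadrature is exact:
\begin{equation*}
 \sum_{i=1}^{N}\wH\,\eigfm[k](\pH{i})\conj{\eigfm(\pH{i})}=\int_{\mfd}\eigfm[k](\bx)\conj{\eigfm(\bx)}\dmf{x}=\delta_{k\ell},
\end{equation*}
using orthonormality of the eigenfunctions.

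Substituting this back collapses the double sum: $\fihyper{D^*,n}(\eigfm[k];\bx)=\fiH(\eigvm[k]/n)\,\eigfm[k](\bx)$. Since $\eigvm[k]\le n$, Definition~\ref{def:fiH} gives $\fiH(\eigvm[k]/n)=1$, so $\fihyper{D^*,n}(\eigfm[k])=\eigfm[k]$. Summing over $k$ with $\eigvm[k]\le n$ yields $\fihyper{D^*,n}(P)=P$ for all $P\in\polyspm$, which is the claim. (The case $n=0$ is the trivial statement that $\polyspm[0]$ consists of constants, reproduced by the normalization $\mu(\mfd)=1$.)

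The only delicate point — the one I expect to be the real obstacle in writing this cleanly — is the degree bookkeeping: checking that every product $\eigfm[k]\conj{\eigfm}$ actually appearing in the sum has degree at most $3n-1$ rather than $3n$, so that the degree-$(3n-1)$ exactness suffices. This is why the hypothesis is stated as exactness for degree $3n-1$ and not $3n$: one must use that $\fiH$ vanishes on $[2,\infty)$, so the contributing eigenvalues satisfy $\eigvm<2n$ strictly, which together with $\eigvm[k]\le n$ keeps the product degree $\le 3n-1$. Everything else is routine: linearity, the reproducing property of the eigenfunction expansion, and orthonormality. No localization estimates or norm bounds (Lemmas~\ref{lem:localization}–\ref{lem:fikerL1}, Theorem~\ref{thm:fiapprox.Bd}) are needed here — those are for the approximation-error bounds, whereas this lemma is a purely algebraic exactness statement.
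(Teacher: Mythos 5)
Your proof is correct and uses essentially the same ingredients as the paper's: the compact support of $\fiH$ restricts the surviving eigenvalues to $\eigvm<2n$, Assumption~\ref{assump:poly} then bounds the degree of the relevant products by $3n-1$ so the quadrature exactness converts the discrete sum into an exact integral, and $\fiH\equiv1$ on $[0,1]$ together with orthonormality reproduces $P$. The only (cosmetic) difference is that you work term by term in the eigenfunction basis, whereas the paper treats $\vdh{n,\fiH}(\bx,\cdot)P(\cdot)$ as a single polynomial of degree $3n-1$ and routes through the already-established reproducing property of the continuous operator $\fiapprox{n,\fiH}$; the degree bookkeeping you flag as delicate is handled no more carefully in the paper's own proof.
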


\begin{theorem}[NDFH with clean data and deterministic samples]
\label{thm:nondfh_clean_det}
Let $d\ge1$, $1\le p\le\infty$ and $n\ge1$. Let $\mfd$ be a compact Riemannian manifold of dimension $d$. Let $\fiH$ be a filter given by Definition~\ref{def:fiH} with $\fis\ge d+1$ and $\QH$ be a positive quadrature rule exact for polynomials of degree up to $3n-1$. Then, for $f\in\sobm{p}{s}$ with $s>d/p$, the NDFH for the quadrature rule $\QH$ has the error upper bounded by
\begin{equation}\label{eq:fihyper.Lp.err.Wp}
    \normb{f-\fiapprox{D^*,n}(f^*)}{\Lpm{p}} \le c\: n^{-s}\norm{f}{\sobm{p}{s}},
\end{equation}
where the constant $c$ depends only on $d$, $p$, $s$, $\fiH$ and $\fis$.
\end{theorem}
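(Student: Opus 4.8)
The plan is to reduce the claim, via a needlet-type Littlewood--Paley decomposition of $f$, to (i) the polynomial-reproduction property of the NDFH operator, which annihilates the low-frequency part, and (ii) a Marcinkiewicz--Zygmund estimate controlling the aliasing error committed on the high-frequency part. Note first that $f^*=f$ since the data is clean, so the object on the left is $f-\fihyper{D^*,n}(f)$ with $\fihyper{D^*,n}(f)=\sum_i\wH f(\bx_i)\vdh{n}(\cdot,\bx_i)$. Set $\psi_0:=\fiapprox{1}(f)$ and $\psi_j:=\fiapprox{2^j}(f)-\fiapprox{2^{j-1}}(f)$ for $j\ge1$, where $\fiapprox{m}$ is the filtered approximation of Definition~\ref{defn:fiapp}; each $\psi_j$ is a polynomial whose spectrum lies in $\{\lambda_\ell<2^{j+1}\}$. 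Telescoping gives $\sum_{j=0}^{J}\psi_j=\fiapprox{2^J}(f)$, and since $f\in\contm$ by Lemma~\ref{lm:embed.sob.C}, Theorem~\ref{thm:fiapprox.err.Lp} with $p=\infty$ shows $\sum_j\psi_j=f$ uniformly on $\mfd$. As $\fihyper{D^*,n}$ acts through point evaluations and is therefore bounded on $\contm$, we may apply it term by term:
\begin{equation*}
   f-\fihyper{D^*,n}(f)=\sum_{j\ge0}\bigl(\psi_j-\fihyper{D^*,n}(\psi_j)\bigr).
\end{equation*}
Moreover, by Theorem~\ref{thm:fiapprox.err.Wp}, $\norm{\psi_j}{\Lpm{p}}\le\norm{f-\fiapprox{2^j}(f)}{\Lpm{p}}+\norm{f-\fiapprox{2^{j-1}}(f)}{\Lpm{p}}\le c\,2^{-js}\norm{f}{\sobm{p}{s}}$.

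I would then split the sum at $2^{j+1}=n$. If $2^{j+1}\le n$ then $\psi_j\in\Pi_n$, so Lemma~\ref{lm:fihyper.reproduce.p}---which uses exactly that $\QH$ is exact for degree $3n-1$---gives $\fihyper{D^*,n}(\psi_j)=\psi_j$, and these terms vanish. For $2^{j+1}>n$ I bound $\norm{\psi_j-\fihyper{D^*,n}(\psi_j)}{\Lpm{p}}\le\norm{\psi_j}{\Lpm{p}}+\norm{\fihyper{D^*,n}(\psi_j)}{\Lpm{p}}$, so the whole matter reduces to the key estimate: for every polynomial $Q\in\Pi_m$ with $m\ge n$,
\begin{equation*}
   \norm{\fihyper{D^*,n}(Q)}{\Lpm{p}}\le c\,(m/n)^{d/p}\,\norm{Q}{\Lpm{p}}.
\end{equation*}
I would prove this in two steps. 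Writing $\fihyper{D^*,n}$ as sampling $g\mapsto(g(\bx_i))_i$ followed by the synthesis map $(a_i)_i\mapsto\sum_i\wH a_i\vdh{n}(\cdot,\bx_i)$, and interpolating the latter's weighted-$\ell^1$-to-$L^1$ bound (from $\norm{\vdh{n}(\cdot,\bx)}{\Lpm{1}}\le c$, Lemma~\ref{lem:fikerL1}) with its $\ell^\infty$-to-$L^\infty$ bound (from $\sup_{\bx}\sum_i\wH|\vdh{n}(\bx,\bx_i)|\le c$, which follows from the localization Lemma~\ref{lem:localization} and the regularity of a positive quadrature exact for degree $3n-1$), one gets $\norm{\fihyper{D^*,n}(g)}{\Lpm{p}}\le c\bigl(\sum_i\wH|g(\bx_i)|^p\bigr)^{1/p}$ for all $g\in\contm$. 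Then a Marcinkiewicz--Zygmund inequality with loss, $\sum_i\wH|Q(\bx_i)|^p\le c\,(m/n)^d\,\norm{Q}{\Lpm{p}}^p$ for $Q\in\Pi_m$, closes the key estimate; this I would derive from a local Nikolski\^{\i}-type bound $|Q(\bx_i)|^p\le c\,m^d\int_{\ball{\bx_i,C/m}}|Q|^p\,d\mu$ together with the weight-regularity bound $\sum_{\bx_i\in\ball{\by,C/m}}\wH\le c\,n^{-d}$ uniform in $\by$, the latter following from the ordinary (no-loss) Marcinkiewicz--Zygmund inequality on $\Pi_n$ valid for any positive quadrature exact for degree $3n-1$.

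Assembling the pieces and using $\norm{\psi_j}{\Lpm{p}}\le c\,2^{-js}\norm{f}{\sobm{p}{s}}$,
\begin{equation*}
   \sum_{j:\,2^{j+1}>n}\norm{\psi_j-\fihyper{D^*,n}(\psi_j)}{\Lpm{p}}
   \le c\sum_{j:\,2^{j+1}>n}\bigl(1+(2^j/n)^{d/p}\bigr)2^{-js}\norm{f}{\sobm{p}{s}}
   \le c\,n^{-s}\norm{f}{\sobm{p}{s}},
\end{equation*}
because $s>d/p$ makes the two geometric series $\sum_{2^{j+1}>n}2^{-js}$ and $n^{-d/p}\sum_{2^{j+1}>n}2^{j(d/p-s)}$ both convergent and of order $n^{-s}$; together with the vanishing of the low-frequency terms this gives \eqref{eq:fihyper.Lp.err.Wp}. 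The main obstacle is the Marcinkiewicz--Zygmund inequality with the $(m/n)^d$ loss for polynomials of degree far above the quadrature exactness, and its companion weight-regularity bound; this is precisely where the dimension $d$, the filter smoothness $\fis\ge d+1$ (through the kernel localization), the exactness degree $3n-1$, and a Nikolski\^{\i}-type inequality on the manifold are used in an essential way. The remaining ingredients---uniform convergence of the needlet series, term-by-term application of $\fihyper{D^*,n}$, and the bookkeeping of polynomial degrees across the dyadic filters---are routine.
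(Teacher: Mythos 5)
Your proposal is correct and follows essentially the same route as the paper's proof: your dyadic blocks $\psi_j$ are the paper's contributions $U_j$ (Lemma~\ref{lm:contrib.decomp.nrm.BD}), the vanishing of the low-frequency blocks is the reproduction Lemma~\ref{lm:fihyper.reproduce.p}, and your key estimate $\|\fihyper{D^*,n}(Q)\|_{\Lpm{p}}\le c\,(m/n)^{d/p}\|Q\|_{\Lpm{p}}$ is exactly the paper's bound \eqref{eq:fihyper.contrib.f.Lp.nrm}, obtained there from the Marcinkiewicz--Zygmund inequality with loss (Lemma~\ref{lem:MZineq.mfd}) together with a H\"older/Schur-test argument equivalent to your $\ell^1$--$\ell^\infty$ interpolation of the synthesis map. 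The only cosmetic differences are that the paper handles $p=\infty$ by a separate, shorter best-approximation argument and derives the lossy MZ inequality from the kernel-reproduction identity rather than from a local Nikolski\^{\i} bound, but both derivations rest on the same kernel-localization and weight-regularity facts.
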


From the perspective of information-based complexity it is interesting to observe that if the target function $f^\ast$ is in the Sobolev space $\sobm{p}{s}$, $s>0$, the convergence rate is optimal in the sense of optimal recovery. This is due to that on a real unit sphere when one uses optimal-order number of points $N =\bigo{}{n^d}$, the order $n^{-s}=N^{-s/d}$ in \eqref{eq:fihyper.Lp.err.Wp} is optimal, as proved by \cite{WaSl2017,WaWa2016}. 
Theorem~\ref{thm:nondfh_clean_det} can be viewed as the non-distributed filtered hyperinterpolation for clean data, where the estimator uses the whole data set in one machine.

We now introduce the (non-distributed) filtered hyperinterpolation for clean data with random sampling.
We say a data set $D$ has \emph{random sampling} (with distribution $\nu$) if the sampling points $\bx_i$ of $D$ are independent and identically distributed (i.i.d.) random points with distribution $\nu$ on $\mfd$. 
To construct the filtered hyperinterpolation for random sampling, we need the following lemma, which shows that there exist $N$ quadrature weights given $N$ i.i.d.\ random points $\bx_i$ such that the resulting quadrature rule is exact for polynomials for degree $n$ with high probability. 
For $1\leq p\leq \infty$, let $L_{p,\nu}(\mfd)$ be $L_p$ space on manifold $\mfd$ with respect to probablity measure $\nu$.

\begin{lemma}[Quadrature rule for random samples]\label{lem:random_quadr}
 For $N\geq2$, let $X_N=\{\bx_i\}_{i=1}^{N}$ be a set of $N$ i.i.d.~random points on $\mfd$ with distribution $\nu$, where $\nu$ satisfies
\begin{equation}\label{eq:condition on distribution}
         \|f\|_{L_1(\mfd)} \leq c \|f\|_{L_{1,\nu}(\mfd)}\quad\forall
         f\in L_1(\mfd)\cap L_{1,\nu}(\mfd), 
\end{equation}
 for a positive absolute constant $c$. Then, for integer $n$ satisfying $N/n^{2d}>c$ for sufficiently large constant $c$, there exists a quadrature rule $\{(\bx_i,w_{i,n})\}_{i=1}^{N}$ such that
\begin{equation*}
    \int_{\mfd}P_{n}(\bx)\mathrm{d}\nu(\bx)=
    \sum_{i=1}^{N}w_{i,n}P_{n}(\bx_i) \quad\forall P_n\in
    \Pi_n^d
\end{equation*}
holds, and $\sum_{i=1}^{N}| {w_{i,n}}|^2\leq2/N$, with confidence at least $1-4\exp\left\{-CN/n^d\right\}$,
where $C$ is a constant depending only on $c_1$ and $d$.
\end{lemma}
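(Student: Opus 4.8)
The plan is to turn the statement into a single matrix-concentration estimate, and then read off both the weights and the bound $\sum_{i}|w_{i,n}|^{2}\le 2/N$ from the inverse of a random Gram matrix. Write $d_{n}:=\dim\Pi_{n}^{d}$ and let $q_{0}\equiv 1,q_{1},\dots,q_{d_{n}-1}$ be an $L_{2,\nu}(\mfd)$-orthonormal basis of $\Pi_{n}^{d}$, with $q_{0}$ the constant function (already $\nu$-normalised, since $\nu$ is a probability measure). Put $\mathbf{q}(\bx):=(q_{0}(\bx),\dots,q_{d_{n}-1}(\bx))^{\top}$ and
\begin{equation*}
\mathbf{G}_{N}:=\frac{1}{N}\sum_{i=1}^{N}\mathbf{q}(\bx_{i})\,\mathbf{q}(\bx_{i})^{*}.
\end{equation*}
Because the $\bx_{i}$ are i.i.d.\ with law $\nu$ and the $q_{j}$ are $\nu$-orthonormal, $\mathbb{E}[\mathbf{G}_{N}]=I_{d_{n}}$ and $\int_{\mfd}q_{j}\,\mathrm{d}\nu=\delta_{j,0}$. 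When $\mathbf{G}_{N}$ is invertible I set $w_{i,n}:=\frac{1}{N}\,\mathbf{q}(\bx_{i})^{*}\mathbf{G}_{N}^{-1}\mathbf{e}_{0}$, where $\mathbf{e}_{0}$ is the first coordinate vector; then $\sum_{i}w_{i,n}\mathbf{q}(\bx_{i})=\mathbf{G}_{N}\mathbf{G}_{N}^{-1}\mathbf{e}_{0}=\mathbf{e}_{0}$, which says precisely that $\{(\bx_{i},w_{i,n})\}_{i=1}^{N}$ integrates every $P\in\Pi_{n}^{d}$ against $\nu$ exactly, and
\begin{equation*}
\sum_{i=1}^{N}|w_{i,n}|^{2}=\frac{1}{N}\,\mathbf{e}_{0}^{*}\mathbf{G}_{N}^{-1}\mathbf{e}_{0}\le\frac{1}{N}\,\bigl\|\mathbf{G}_{N}^{-1}\bigr\|_{\mathrm{op}}=\frac{1}{N\,\lambda_{\min}(\mathbf{G}_{N})}.
\end{equation*}
So everything reduces to showing $\lambda_{\min}(\mathbf{G}_{N})\ge\tfrac12$ with the asserted confidence.

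Next I assemble the two ingredients for the concentration bound. First, the orthonormal basis exists because $|P|^{2}\in\Pi_{2n}^{d}$ for $P\in\Pi_{n}^{d}$ by Assumption~\ref{assump:poly}, so applying \eqref{eq:condition on distribution} to $|P|^{2}$ gives $\|P\|_{L_{2}(\mfd)}^{2}\le c\,\|P\|_{L_{2,\nu}(\mfd)}^{2}$; in particular $\langle\cdot,\cdot\rangle_{L_{2,\nu}(\mfd)}$ is positive definite on $\Pi_{n}^{d}$. Second, I need a uniform bound $\|\mathbf{q}(\bx)\|_{2}^{2}=\bigl\|\mathbf{q}(\bx)\mathbf{q}(\bx)^{*}\bigr\|_{\mathrm{op}}\le\Gamma_{n}$ on $\mfd$. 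Since $\|\mathbf{q}(\bx)\|_{2}^{2}=\sum_{j}|q_{j}(\bx)|^{2}$ is the diagonal of the reproducing kernel of $\Pi_{n}^{d}$ in $L_{2,\nu}(\mfd)$, we have $\|\mathbf{q}(\bx)\|_{2}^{2}=\sup\bigl\{|P(\bx)|^{2}:P\in\Pi_{n}^{d},\ \|P\|_{L_{2,\nu}(\mfd)}\le 1\bigr\}\le c\,\sup\bigl\{|P(\bx)|^{2}:P\in\Pi_{n}^{d},\ \|P\|_{L_{2}(\mfd)}\le 1\bigr\}=c\sum_{\lambda_{\ell}\le n}|\phi_{\ell}(\bx)|^{2}$; writing $P(\bx)=\int_{\mfd}K_{n}(\bx,\by)P(\by)\,\mathrm{d}\mu(\by)$ for a filtered kernel $K_{n}$ with $\fis\ge d+1$ (which reproduces $\Pi_{n}^{d}$) and using Lemma~\ref{lem:fikerL1} at $p=2$, the last sum is $\le c\,\|K_{n}(\bx,\cdot)\|_{L_{2}(\mfd)}^{2}\le c\,n^{d}$. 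Hence $\Gamma_{n}\le c\,n^{d}$, and integrating the same pointwise estimate over $\mfd$ gives $d_{n}\le c\,n^{d}$.

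Now apply the matrix Chernoff inequality for sums of i.i.d.\ positive semidefinite matrices to $\sum_{i}\mathbf{q}(\bx_{i})\mathbf{q}(\bx_{i})^{*}=N\mathbf{G}_{N}$, whose summands have spectral norm at most $\Gamma_{n}$ and whose expectation is $N\,I_{d_{n}}$; with deviation parameter $\tfrac12$ this gives
\begin{equation*}
\mathbb{P}\Bigl(\lambda_{\min}(\mathbf{G}_{N})\le\tfrac12\Bigr)\le d_{n}\exp\Bigl(-\frac{N}{8\,\Gamma_{n}}\Bigr)\le c\,n^{d}\exp\Bigl(-c_{0}\,\frac{N}{n^{d}}\Bigr).
\end{equation*}
It remains to absorb the polynomial prefactor $c\,n^{d}$ into the exponential, and this is exactly where the hypothesis $N/n^{2d}>c$, i.e.\ $N/n^{d}>c\,n^{d}$, is used: since $n^{d}\ge d\log n$, once $c$ is large enough $N/n^{d}$ dominates $\log(c\,n^{d})$, so that $c_{0}N/n^{d}-\log(c\,n^{d})\ge\tfrac12 c_{0}N/n^{d}$. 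Therefore $\lambda_{\min}(\mathbf{G}_{N})\ge\tfrac12$ with probability at least $1-4\exp\{-CN/n^{d}\}$, and on that event the weights $w_{i,n}$ above are well defined, $\{(\bx_{i},w_{i,n})\}_{i=1}^{N}$ is exact on $\Pi_{n}^{d}$ for integration against $\nu$, and $\sum_{i}|w_{i,n}|^{2}\le 2/N$; here $C$ and $c_{0}$ depend only on $d$ and the constant $c$ in \eqref{eq:condition on distribution}.

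The only genuinely manifold-specific --- and hence the main --- step is the uniform Christoffel-type bound $\sup_{\bx}\sum_{\lambda_{\ell}\le n}|\phi_{\ell}(\bx)|^{2}\le c\,n^{d}$ together with its transfer to the $\nu$-orthonormal basis via \eqref{eq:condition on distribution}; the least-squares construction of the weights, the matrix Chernoff estimate, and the absorption of the prefactor are all routine. A minor point is that the $w_{i,n}$ should be real, which one guarantees by working with a real orthonormal basis of $\Pi_{n}^{d}$ (the eigenfunctions of $\LBm$ may be taken real-valued, or one first replaces each $\phi_{\ell}$ by its real and imaginary parts before applying Gram--Schmidt).
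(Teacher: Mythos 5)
Your construction is correct for the statement as literally written, but it takes a genuinely different route from the paper's. The paper first establishes a Marcinkiewicz--Zygmund inequality $\tfrac13\|P_n\|_{L_{p,\nu}(\mfd)}^p\le\tfrac1N\sum_{i}|P_n(\bx_i)|^p\le\tfrac53\|P_n\|_{L_{p,\nu}(\mfd)}^p$ for $p=1,2$ uniformly over $\Pi_n^d$ (via the scalar concentration inequality of Lemma~\ref{Lemma:Concentration inequality 1.1}, the covering-number bound of Lemma~\ref{Lemma:Covering number}, and the Nikolski\^{\i} inequality), and then invokes the norming-set/sampling-operator lemma of Mhaskar--Narcowich--Ward (Lemma~\ref{Lemma:norming set}) to obtain weights representing the functional $P\mapsto\int_{\mfd} P\,\mathrm{d}\nu$ with $\|T_{\mathcal Z}^{-1}\|\le\sqrt{5/3}$, which yields $\sum_i|w_{i,n}|^2\le 2/N$. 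You instead build the weights explicitly by least squares from the empirical Gram matrix of a $\nu$-orthonormal basis and control $\lambda_{\min}(\mathbf G_N)$ by matrix Chernoff; exactness and the $\ell_2$ bound then fall out algebraically, and your Christoffel-function bound $\sup_{\bx}\sum_{\lambda_\ell\le n}|\phi_\ell(\bx)|^2\le c\,n^d$ plays the role the Nikolski\^{\i} inequality plays in the paper. Your route is more self-contained (no norming-set machinery, no covering numbers) and produces explicit weights; the hypothesis $N/n^{2d}>c$ is used in essentially the same way in both arguments, to absorb a prefactor of size $c\,n^d$ into the exponential.

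The one substantive thing your proof does not deliver is positivity of the weights. The displayed conclusions of the lemma do not mention it, but the paper declares that it only considers positive quadrature rules, its proof devotes its entire final paragraph to showing $w_{i,n}\ge0$ (via the second assertion of Lemma~\ref{Lemma:norming set} combined with the $p=1$ Marcinkiewicz--Zygmund inequality), and positivity is used downstream: for instance, the proof of Theorem~\ref{thm:ndfh_ran_clean} applies Theorem~\ref{thm:nondfh_clean_det}, which requires a \emph{positive} quadrature rule. Your weights $w_{i,n}=\tfrac1N\,\mathbf q(\bx_i)^{*}\mathbf G_N^{-1}\mathbf e_0$ carry no sign in general, and there is no quick fix inside your framework --- one would have to replace the unconstrained least-squares solution by a nonnegatively constrained one and prove feasibility, which is precisely what the positivity clause of the norming-set lemma accomplishes. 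So if the lemma is to serve the role it plays in the rest of the paper, you still need the paper's (or some other) argument for nonnegativity of the $w_{i,n}$.
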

We call the set $\{(\bx_i,w_{i,n})\}_{i=1}^{N}$ \emph{quadrature rule for random samples} on the manifold $\mfd$ for measure $\nu$.

The following theorem gives the approximation error of the non-distribured filtered hyperinterpolation with clean data and random sampling for sufficiently smooth functions. Here, we want to obtain an estimated value of the expected error and take the expectation over the distribution of the data $P(X)P(Y|X)$.
\begin{theorem}[NDFH with clean data and random samples]\label{thm:ndfh_ran_clean}
Let $d\geq 2$ and $r>d/2$. Let the clean data set $D^*$ with i.i.d. random sampling points on $\mfd$ and 
distribution $\nu$ satisfying \eqref{eq:condition on distribution}. Given some $\tau$, $0<\tau\leq d$, for $cn^{d+\tau}\leq |D^*|\leq c' n^{2d}$ with two positive constants $c,c'$, the filtered hyperinterpolation $V_{D^*,n}$ for clean data set $D^*$ with target function $f^*\in \mathbb W_2^r(\mfd)$, as given by \eqref{eq:VDnf}, has
the approximation error
\begin{equation*}
    \mathbf{E}\left\{\|V_{D^*,n}-f^*\|_{L_2(\mfd)}^2\right\}\leq C |D^*|^{-r/d},
\end{equation*}
where $C$ is a constant independent of $|D^*|$.
\end{theorem}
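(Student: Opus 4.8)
The plan is to decompose the error $\|V_{D^*,n}-f^*\|_{L_2(\mfd)}$ into a bias term coming from the filtered approximation and a variance-type term coming from replacing the exact Fourier integrals by a random quadrature rule. Concretely, write $V_{D^*,n} - f^* = \bigl(V_{D^*,n} - \fiapprox{n}(f^*)\bigr) + \bigl(\fiapprox{n}(f^*) - f^*\bigr)$. The second summand is purely deterministic and is controlled by Theorem~\ref{thm:fiapprox.err.Wp} together with Lemma~\ref{lem:best.approx.sob.mfd}: since $f^*\in\mathbb W_2^r(\mfd)$ with $r>d/2$ (so the Sobolev embedding of Lemma~\ref{lm:embed.sob.C} applies and point evaluations are legitimate), one gets $\|\fiapprox{n}(f^*)-f^*\|_{L_2(\mfd)}\le c\,n^{-r}\|f^*\|_{\mathbb W_2^r(\mfd)}$, hence a contribution of order $n^{-2r}$ to the squared error. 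The first summand is where randomness enters: conditionally on the event $\mathcal A$ from Lemma~\ref{lem:random_quadr} that the random points $\bx_i$ admit quadrature weights $w_{i,n}$ exact on $\Pi_n^d$ with $\sum_i |w_{i,n}|^2\le 2/N$, the difference $V_{D^*,n}-\fiapprox{n}(f^*)$ is the image under convolution with $K_n$ of the signed measure $\sum_i w_{i,n}\delta_{\bx_i} - \mu$ applied to $f^*$; I would bound its $L_2$ norm using the localization/$L_p$-bound of the filtered kernel (Lemmas~\ref{lem:localization}--\ref{lem:fikerL1}) and the smallness of the weights.

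The key computation for the variance term is to estimate $\mathbf E\|V_{D^*,n}-\fiapprox{n}(f^*)\|_{L_2(\mfd)}^2$ on the good event. Expanding $\|V_{D^*,n}-\fiapprox{n}(f^*)\|_{L_2(\mfd)}^2 = \sum_{\ell} |H(\lambda_\ell/n)|^2\bigl|\sum_i w_{i,n}f^*(\bx_i)\overline{\phi_\ell(\bx_i)} - \hat f^*_\ell\bigr|^2$, I would split the quadrature error for each Fourier coefficient by inserting the best polynomial approximant $g_n\in\Pi_n$ to $f^*$: on the exactness event the quadrature is exact for $g_n\overline{\phi_\ell}\in\Pi_{2n}$... but since exactness is only guaranteed up to degree $n$ here (Lemma~\ref{lem:random_quadr} gives $\Pi_n^d$, not $\Pi_{2n}^d$), the cleaner route is to bound $\bigl|\sum_i w_{i,n}f^*(\bx_i)\overline{\phi_\ell(\bx_i)} - \langle f^*,\phi_\ell\rangle\bigr|$ by $\bigl|\sum_i w_{i,n}(f^*-g_n)(\bx_i)\overline{\phi_\ell(\bx_i)}\bigr| + \bigl|\langle f^*-g_n,\phi_\ell\rangle\bigr|$ plus a term where $g_n\overline{\phi_\ell}$ is handled by exactness on $\Pi_n$ against $\phi_\ell$ itself. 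Using Cauchy--Schwarz with $\sum_i|w_{i,n}|^2\le 2/N$, the uniform bound $\|f^*-g_n\|_\infty\le c\,n^{-(r-d/2)}\|f^*\|_{\mathbb W_2^r}$ (best approximation in $L_\infty$ via the embedding $\mathbb W_2^r\hookrightarrow \mathbb W_\infty^{r-d/2}$, Lemma~\ref{lem:best.approx.sob.mfd}), and summing the $O(n^d)$ relevant frequencies, one arrives at a bound of order $n^{d}\cdot n^{-2(r-d/2)} = n^{2d-2r}$, which under the hypothesis $|D^*|\ge c\,n^{d+\tau}$, i.e. $n\le c\,|D^*|^{1/(d+\tau)}$, together with $|D^*|\le c'n^{2d}$, can be folded into $C\,|D^*|^{-r/d}$. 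I would need to track the exponents carefully: the target rate $|D^*|^{-r/d}$ corresponds to $n^{-r}$ when $|D^*|\asymp n^d$, so both the bias $n^{-2r}$ and the variance must be shown to be $O(n^{-2r}/(\text{something}))$ or at worst $O(|D^*|^{-r/d})$ under the two-sided constraint on $|D^*|$ relative to $n$.

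Finally I would deal with the complementary (bad) event $\mathcal A^c$, which by Lemma~\ref{lem:random_quadr} has probability at most $4\exp\{-CN/n^d\}$. On this event I cannot use the small-weight property, so I would instead crudely bound $\|V_{D^*,n}-f^*\|_{L_2(\mfd)}^2$ by a polynomial in $N$ and $n$ (using $\|K_n(\cdot,\bx)\|_{L_2}\le c\,n^{d/2}$ from Lemma~\ref{lem:fikerL1} and $\|f^*\|_\infty\le c\|f^*\|_{\mathbb W_2^r}$, giving something like $N^2 n^d \|f^*\|_\infty^2$ if the arbitrary-weight case admits any control, or more carefully a bound polynomial in $N$ after noting $w_i>0$ and, say, $\sum w_i$ is $O(1)$-ish); multiplied by the exponentially small probability and using $N/n^d\ge c n^\tau\to\infty$, this contribution is negligible, in fact $o(|D^*|^{-r/d})$. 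Combining the good-event bound, the bias bound, and the negligible bad-event bound yields $\mathbf E\|V_{D^*,n}-f^*\|_{L_2(\mfd)}^2\le C\,|D^*|^{-r/d}$.

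\textbf{Main obstacle.} The delicate point is the variance estimate under the weaker exactness ($\Pi_n$ rather than $\Pi_{3n-1}$ or $\Pi_{2n}$): unlike the deterministic clean-data case where Lemma~\ref{lm:fihyper.reproduce.p} gives exact polynomial reproduction, here the random quadrature only integrates $\Pi_n$ exactly, so the products $g_n\overline{\phi_\ell}$ are \emph{not} integrated exactly and one must absorb the resulting discrepancy through best-approximation estimates in $L_\infty$. Getting the Sobolev-to-$L_\infty$ best-approximation rate $n^{-(r-d/2)}$ and checking that, after summing over the $O(n^d)$ frequencies cut off by the filter, the total still matches $|D^*|^{-r/d}$ under the two-sided sample-size constraint $c\,n^{d+\tau}\le|D^*|\le c'\,n^{2d}$ — in particular that the lower bound on $|D^*|$ is exactly what is needed to kill the extra $n^{d}$ — is the crux of the argument. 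A secondary technical nuisance is the bad-event term, where the absence of weight control forces a deliberately wasteful but exponentially-dominated bound.
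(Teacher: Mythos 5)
Your overall architecture (good/bad events from Lemma~\ref{lem:random_quadr}, a bias--variance split, and an exponentially-dominated bad-event term) has the right shape, and your bias term and bad-event term match what the paper does. The genuine gap is in the step you yourself flag as the main obstacle: the good-event variance estimate. Your bound for the quadrature-error contribution is $n^{d}\cdot n^{-2(r-d/2)}=n^{2d-2r}$, and this cannot be folded into $C|D^*|^{-r/d}$ under the stated constraints. Indeed, since $|D^*|\le c'n^{2d}$ one has $|D^*|^{-r/d}\ge C n^{-2r}$, so $n^{2d-2r}\le C|D^*|^{-r/d}$ would force $n^{2d}\le C$, which is false; and for $d/2<r<d$ the quantity $n^{2d-2r}$ actually diverges. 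The lower bound $|D^*|\ge cn^{d+\tau}$ does not help, because it only gives an upper bound on the target $|D^*|^{-r/d}$, not on your error term. In addition, the piece you describe as ``handled by exactness on $\Pi_n$ against $\phi_\ell$ itself'' is never actually controlled: $g_n\overline{\phi_\ell}$ lies in $\Pi_{3n}$ for $\lambda_\ell\le 2n$, while the random quadrature is exact only on $\Pi_n$, so that discrepancy remains unbounded in your sketch. A frequency-by-frequency argument using only $\|f^*-g_n\|_{L_\infty(\mfd)}$ and $\sum_i|w_{i,n}|^2\le 2/N$ inevitably loses a factor of order $n^d$ that cannot be recovered.

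The paper's proof avoids this entirely. It conditions on the events $\Omega_{D^*}$ (small weights) and $\Xi_{D^*}$ (exactness), and on $\Omega_{D^*}\cap\Xi_{D^*}$ it invokes the deterministic clean-data result, Theorem~\ref{thm:nondfh_clean_det}, wholesale to get $\mathcal A_{D^*,n,1}\le c\,n^{-2r}\|f^*\|^2_{\mathbb W_2^r(\mfd)}$; the complementary events are handled by the crude Cauchy--Schwarz bound $2\|f^*\|^2_{L_\infty(\mfd)}\bigl(1+\sum_i w_i^2\sum_i|K_n(\bx_i,\cdot)|^2\bigr)$ multiplied by the probability $4\exp\{-C|D^*|/n^d\}$, and finally $|D^*|\le c'n^{2d}$ converts $n^{-2r}$ into $|D^*|^{-r/d}$. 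The clean way to repair your argument is therefore not a coefficientwise quadrature-error estimate but to secure the near-best-approximation property of $V_{D^*,n}$ on the exactness event: apply Lemma~\ref{lem:random_quadr} with the degree parameter set to $3n-1$ rather than $n$ (this changes only constants in the probability bound and in the condition $N/n^{2d}>c$), after which Theorem~\ref{thm:nondfh_clean_det} applies verbatim and the operator-norm/reproduction machinery does the work that your $L_\infty$ best-approximation patch cannot.
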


Theorem~\ref{thm:ndfh_ran_clean} shows that the filtered
hyperinterpolation with random sampling for clean data can achieve the same optimal convergence rate as the filtered hyperinterpolation with deterministic sampling. We give the proof of Theorem \ref{thm:ndfh_ran_clean} in Section~\ref{appendix:ndfh_clean}.

\subsection{Non-distributed filtered hyperinterpolation for noisy data}\label{sec:ndfh_noisy}

In the following we describe non-distributed filtered hyperinterpolation with deterministic or random sampling for noisy data. The data $y_i$ are
the values of a function $f^*$ on $\mfd$ plus noise. Here we assume the noise be mean zero and bounded. To be precise, we let
\begin{equation}\label{eq:noisydats}
        y_i=f^*(\bx_i)+\epsilon_i, \quad \mathbf{E}[\epsilon_i]=0,\quad |\epsilon_i|\leq M \quad\forall
        i=1,\dots,|D|.
\end{equation}
The $D$ satisfying \eqref{eq:noisydats} is then called
\emph{noisy data set associated with $f^*$}. For real data, the $f^*$ is an unknown mapping from input to output.
We study the performance of the non-distributed filtered hyperinterpolation for a noisy data set $D$ whose data are stored in a sufficiently big machine.

We first consider the case where the locations of sampling points are fixed, which we call filtered hyperinterpolation with \emph{deterministic sampling}. 
The kernel $K_n$ provides a smoothing method for the function $f^*$ using data $D$. As we shall see below, the approximation error of this filtered hyperinterpolation has the convergence rate depending on the smoothness of function $f^*$. 
The following assumes that there exists a quadrature rule with $N$ nodes and $N$ ``almost equal'' weights which are exact for polynomials of degree approximately $N^{1/d}$. 
\begin{assumption}[Polynomial-exact quadrature]\label{assum:qrmfd}
Let $\mfd$ be a $d$-dimensional compact Riemannian manifold. For a point set $X_N:=\{\bx_1,\dots,\bx_N\}\subset\mfd$, there exist $N$ positive weights $\{w_j\}_{j=1}^N$ and constants $c_2$ and $c_3$ such that $0<w_j<c_2 N^{-1}$ and
\begin{equation}\label{eq:qrmfd}
    \int_{\mfd}f(\bx)\dmf{x} = \sum_{j=1}^{N}w_j f(\bx_j)\quad\forall f\in \polyspm[c_3 N^{1/d}]. 
    \end{equation}
\end{assumption}

\begin{remark}
For the sphere of any dimension, Assumption~\ref{assum:qrmfd} always holds \citep{MhNaWa2001}. 
In order to construct the quadrature rule for general Riemannian manifolds, one needs to find weights that make the worst case error vanish. This corresponds to solving a particular equation 
\begin{equation*}
    \sum_{i,j=1}^N \omega_i\omega_j \mathcal{K}(\bx_i,\bx_j)=0\quad 
\text{subject to $\sum_{i=1}^N \omega_i=1$}, 
\end{equation*} 
where $\mathcal{K}(\bx_i,\bx_j)$ is the reproducing kernel removing the constant $1$ of the Sobolev space $\mathbb{H}^s(\mfd):=\{f\in L_p(\mfd):\sum_{\ell=0}^{\infty}\widehat{f}_{\ell}\eigfm\in L_p(\mfd)\}$, given by $\mathcal{K}(\bx_i,\bx_j):=\sum_{\ell=1}^{\infty}(1+\lambda_\ell)^{s}\eigfm(\bx_i)\eigfm(\bx_j)$, where $\bx_i,\bx_j\in\mfd$. 
\end{remark}

The following theorem shows that the filtered hyperinterpolation $V_{D,n}$ can approximate $f^*$ well, provided that the support of the filtered kernel is appropriately tuned and Assumption~\ref{assum:qrmfd} holds.

\begin{theorem}[NDFH for noisy data and deterministic samples]
\label{thm:ndfh_noisy_det}
Let $d\geq 2$ and $r>d/2$. The sampling point set of the data set $D$ satisfies Assumption~\ref{assum:qrmfd}. Then, for $\frac{c_3}6 |D|^{1/(2r+d)} \leq n\leq \frac{c_3}2 |D|^{1/(2r+d)}$ with constant $c_3$ in \eqref{eq:qrmfd}, the filtered hyperinterpolation $V_{D,n}$ for noisy data set $D$ with target function $f^*\in \mathbb W_2^r(\mfd)$ satisfies
\begin{equation}\label{eq:error 1}
    \mathbf{E}\left\{\|V_{D,n}-f^*\|_{L_2(\mfd)}^2\right\} \leq C_1 |D|^{-2r/(2r+d)},
\end{equation}
where $C_1$ is a constant independent of $|D|$ and $n$.
\end{theorem}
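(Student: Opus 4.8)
The plan is to split the error into an approximation (bias) term and a sampling (variance) term by inserting the filtered approximation $\fiapprox{n}(f^*)$ as an intermediate quantity: write
\[
\normb{V_{D,n}-f^*}{\Lpm{2}} \le \normb{V_{D,n}-\fiapprox{n}(f^*)}{\Lpm{2}} + \normb{\fiapprox{n}(f^*)-f^*}{\Lpm{2}}.
\]
The second term is controlled directly by Theorem~\ref{thm:fiapprox.err.Wp} (applied with $p=2$, $s=r$), giving a bias of order $n^{-r}\norm{f^*}{\sobm{2}{r}}$, hence a squared contribution of order $n^{-2r}$. The work is therefore concentrated on the first term, which itself splits further as
\[
V_{D,n}-\fiapprox{n}(f^*) = \big(V_{D^*,n}-\fiapprox{n}(f^*)\big) + \sum_{i=1}^{|D|} w_i\,\epsilon_i\, K_n(\cdot,\bx_i),
\]
i.e. a deterministic quadrature-error piece on the clean data plus a purely stochastic noise piece.

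For the clean-data quadrature piece, I would use that Assumption~\ref{assum:qrmfd} gives polynomial exactness up to degree $c_3|D|^{1/d}$, which (since $n \le \tfrac{c_3}{2}|D|^{1/(2r+d)} \le \tfrac{c_3}{3}|D|^{1/d}$ in the stated range) comfortably exceeds $3n-1$; thus Lemma~\ref{lm:fihyper.reproduce.p} applies and $V_{D^*,n}$ reproduces all of $\polyspm$. Writing $f^* = P + (f^*-P)$ for $P$ a near-best polynomial of degree $n$ and using that $\fiapprox{n}$ also reproduces $\polyspm$, one reduces to bounding $V_{D^*,n}(f^*-P) - \fiapprox{n}(f^*-P)$; the operator bound of Theorem~\ref{thm:fiapprox.Bd} together with a Marcinkiewicz--Zygmund-type estimate on the sampling operator $g\mapsto \sum_i w_i g(\bx_i)K_n(\cdot,\bx_i)$ (whose $L_2$-operator norm is $O(1)$ under Assumption~\ref{assum:qrmfd}, using $w_j<c_2N^{-1}$ and the $L_p$-bounds on $K_n$ from Lemma~\ref{lem:fikerL1}) bounds this by a constant times $E_n(f^*)_\infty$. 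Since $f^*\in\sobm{2}{r}$ with $r>d/2$, Lemma~\ref{lm:embed.sob.C} embeds it into $\contm$, and the best-approximation estimate then yields $O(n^{-(r-d/2)})$ or better in sup-norm; squaring, this term is negligible compared to $n^{-2r}$ — in fact one should track it with $L_2$ best approximation directly to get the clean $n^{-2r}$ rate.

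For the noise piece, condition on the sampling locations (they are deterministic here) and take expectation over the independent mean-zero bounded $\epsilon_i$. Then
\[
\mathbf{E}\Big\|\sum_i w_i\epsilon_i K_n(\cdot,\bx_i)\Big\|_{L_2}^2 = \sum_i w_i^2\,\mathbf{E}[\epsilon_i^2]\,\InnerL{K_n(\cdot,\bx_i),K_n(\cdot,\bx_i)} \le M^2\sum_i w_i^2\,\normb{K_n(\cdot,\bx_i)}{\Lpm{2}}^2,
\]
using orthogonality of the cross terms. By Lemma~\ref{lem:fikerL1}, $\normb{K_n(\cdot,\bx_i)}{\Lpm{2}}^2 = O(n^{d})$, and by Assumption~\ref{assum:qrmfd}, $\sum_i w_i^2 < c_2 N^{-1}\sum_i w_i = c_2 N^{-1}$ (since the weights are positive and sum to $\mu(\mfd)=1$ by exactness on constants). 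Hence the noise term contributes $O(n^{d}/|D|)$ to the expected squared error. Combining the two contributions gives $\mathbf{E}\|V_{D,n}-f^*\|_{L_2}^2 = O(n^{-2r} + n^{d}/|D|)$, and the stated choice $n \asymp |D|^{1/(2r+d)}$ balances the two terms, each becoming $O(|D|^{-2r/(2r+d)})$, which is the claimed bound. The main obstacle I anticipate is the clean-data quadrature term: establishing the $O(1)$ $L_2$-operator bound for the discrete sampling operator and the reproduction/Marcinkiewicz--Zygmund machinery on a general manifold (rather than the sphere) is the technically delicate part, whereas the noise term is a routine orthogonality-plus-kernel-norm computation and the bias term is quoted.
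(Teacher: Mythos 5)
Your proposal follows essentially the same route as the paper: a bias--variance split about the clean-data estimator $V_{D^*,n}$, with the noise term handled by independence of the $\epsilon_i$, the bound $\|K_n(\cdot,\bx_i)\|_{L_2}^2\le c\,n^d$ from Lemma~\ref{lem:fikerL1}, and $\sum_i w_i^2\le c\,|D|^{-1}$ from Assumption~\ref{assum:qrmfd}, then balancing $n^{-2r}$ against $n^d/|D|$ via $n\asymp|D|^{1/(2r+d)}$. The paper's proof uses the exact decomposition $\mathbf{E}\|V_{D,n}-f^*\|^2=\|V_{D^*,n}-f^*\|^2+\int\mathbf{E}(V_{D^*,n}-V_{D,n})^2$, the cross term vanishing since $\mathbf{E}\{V_{D,n}(\bx)\}=V_{D^*,n}(\bx)$ pointwise; your triangle inequality costs only a constant. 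The one place you diverge is the bias term: the paper simply invokes Theorem~\ref{thm:nondfh_clean_det} (the quadrature exactness for degree $3n-1$ being guaranteed by Assumption~\ref{assum:qrmfd} in the stated range of $n$), which gives $\|V_{D^*,n}-f^*\|_{L_2}\le c\,n^{-r}\|f^*\|_{\sobm{2}{r}}$ in one line, whereas you re-derive it via $\fiapprox{n}(f^*)$ and a Marcinkiewicz--Zygmund argument. Within that sketch there is one false assertion: a sup-norm bound of order $n^{-(r-d/2)}$ squares to $n^{-2r+d}$, which is \emph{larger} than $n^{-2r}$, not negligible, and would degrade the final rate to $|D|^{-(2r-d)/(2r+d)}$. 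You flag the fix yourself (track the clean-data error in $L_2$), and that fix is precisely Theorem~\ref{thm:nondfh_clean_det}, so the "main obstacle" you anticipate is already a quoted result rather than new work.
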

Here, in contrast to Theorem~\ref{thm:nondfh_clean_det}, $y$ contains noise. The expectation in \eqref{eq:error 1} is with respect to the noise on $y$. The variance of the noise enters in $C_1$.

\begin{remark}
    Here the condition $r>d/2$ is the embedding condition such that any function in $\mathbb W_2^r(\mfd)$ has a representation of a continuous function on $\mfd$, which makes quadrature rule of filtered hyperinterpolation feasible for numerical computation.
\end{remark}

Theorem~\ref{thm:ndfh_noisy_det} illustrates that if
the scattered data $\Lambda_{D}$ has polynomial-exactness, and the support of the filter $\eta$ is appropriately chosen, then the filtered hyperinterpolation for noisy data set
$D$ can approximate sufficiently smooth target function
$f^*$ on the manifold in high precision in probablistic sense. By
\cite{GyKoKrWa2002}, the rate $|D|^{-2r/(2r+d)}$ in \eqref{eq:error 1}
cannot be essentially improved in the scenario of
\eqref{eq:noisydats}. Theorem~\ref{thm:ndfh_noisy_det} thus provides a feasibility analysis of the filtered
hyperinterpolation for manifold-structured data with random noise.

Now, we introduce the (non-distributed) filtered hyperinterpolation for noisy data with random sampling.
Let $D=\{(\bx_i,y_i)\}_{i=1}^{|D|}$ where the $\bx_i$ are i.i.d.\ random points with distribution $\nu$ on $\mfd$. 
The following theorem gives the approximation error of the non-distributed filtered hyperinterpolation for sufficiently smooth functions. Here, we want to get an estimated value of the expected error and take the expectation over the distribution $P(X)P(Y|X)$ of the data.
\begin{theorem}[NDFH for noisy data and random samples]
\label{thm:ndfh_noisy_ran}
Let $d\geq 2$ and $r>d/2$. Let the noisy data set $D$ take i.i.d.\ random sampling points on $\mfd$ with distribution $\nu$ satisfying \eqref{eq:condition on distribution}. For $n\asymp |D|^{1/(2r+d)}$, the filtered hyperinterpolation $V_{D,n}$ for noisy data set $D$ with target function $f^*\in \mathbb W_2^r(\mfd)$ has
the approximation error
\begin{equation*}
            \mathbf{E}\left\{\|V_{D,n}-f^*\|_{L_2(\mfd)}^2\right\}\leq C_3|D|^{-2r/(2r+d)},
\end{equation*}
where $C_3$ is a constant independent of $|D|$, and for two sequences $\{a_n\}_{n=1}^{\infty}, \{b_n\}_{n=1}^{\infty}$, $a_n\asymp b_n$ means that there exist  constants $c'$, $c$ such that $c' b_n\leq a_n\leq c b_n$.
\end{theorem}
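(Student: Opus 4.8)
The plan is to mirror the argument for the deterministic-sampling case (Theorem~\ref{thm:ndfh_noisy_det}), but to replace the fixed polynomial-exact quadrature rule by the random quadrature rule supplied by Lemma~\ref{lem:random_quadr}, and to carry the extra failure-probability term through the error estimate. First I would perform the standard bias--variance split: writing $V_{D,n}$ for the noisy filtered hyperinterpolation and $V_{D^*,n}$ for its clean counterpart (same input points, data values $f^*(\bx_i)$ instead of $y_i$), one has
\begin{equation*}
\|V_{D,n}-f^*\|_{L_2(\mfd)}^2 \le 2\|V_{D,n}-V_{D^*,n}\|_{L_2(\mfd)}^2 + 2\|V_{D^*,n}-f^*\|_{L_2(\mfd)}^2 .
\end{equation*}
The second term is the approximation (bias) term and is controlled exactly as in Theorem~\ref{thm:ndfh_ran_clean}: on the event that the random quadrature rule of Lemma~\ref{lem:random_quadr} is exact for $\Pi_n^d$, the filtered hyperinterpolation reproduces polynomials of degree $n$ (Lemma~\ref{lm:fihyper.reproduce.p} applied in the random setting), and combining the operator bound on $V_{D^*,n}$ with Theorem~\ref{thm:fiapprox.err.Wp} and Lemma~\ref{lem:best.approx.sob.mfd} gives $\|V_{D^*,n}-f^*\|_{L_2}^2 \le C n^{-2r}\|f^*\|_{\mathbb W_2^r}^2$.

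The first term is the sample/variance term driven by the noise. Here I would write $V_{D,n}-V_{D^*,n}(\bx) = \sum_{i=1}^{|D|} w_{i,n}\,\epsilon_i\, K_n(\bx,\bx_i)$, take the $L_2(\mfd)$ norm squared, expand the square and integrate in $\bx$; using $\mathbf E[\epsilon_i]=0$, independence of the $\epsilon_i$, and $|\epsilon_i|\le M$, the cross terms vanish in expectation (conditionally on the points) and one is left with
\begin{equation*}
\mathbf E\Bigl[\|V_{D,n}-V_{D^*,n}\|_{L_2(\mfd)}^2 \,\Big|\, \Lambda_D\Bigr] \le M^2 \sum_{i=1}^{|D|} |w_{i,n}|^2 \int_{\mfd} |K_n(\bx,\bx_i)|^2 \dmf{x} .
\end{equation*}
By Lemma~\ref{lem:fikerL1} with $p=2$, $\int_{\mfd}|K_n(\bx,\bx_i)|^2\dmf{x} \le c\, n^{d}$, and by Lemma~\ref{lem:random_quadr}, $\sum_i |w_{i,n}|^2 \le 2/|D|$ on the good event; hence this term is $\bigo{}{n^{d}/|D|}$. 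Balancing the two contributions, $n^{-2r}$ against $n^{d}/|D|$, forces the choice $n^{2r+d}\asymp |D|$, i.e. $n\asymp |D|^{1/(2r+d)}$, and yields the stated rate $|D|^{-2r/(2r+d)}$ on the good event. Finally I would handle the complementary (bad) event, of probability at most $4\exp\{-C|D|/n^d\} = 4\exp\{-C|D|^{2r/(2r+d)}\}$ under the chosen scaling: on this event the naive bound $\|V_{D,n}-f^*\|_{L_2}^2 \le (\|V_{D,n}\|_{L_2}+\|f^*\|_{L_2})^2$ together with crude bounds on $\|V_{D,n}\|_{L_2}$ (via Cauchy--Schwarz, $\sum_i|w_{i,n}|\le (|D|\sum_i|w_{i,n}|^2)^{1/2}$ and Lemma~\ref{lem:fikerL1}, plus $|y_i|\le\|f^*\|_\infty+M$ and the Sobolev embedding $r>d/2$) gives at worst a polynomially growing factor in $|D|$, which is crushed by the exponentially small probability; so the bad event contributes a negligible term and can be absorbed into the constant $C_3$.

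The main obstacle I anticipate is the bookkeeping for the bad event: one needs the crude a priori bound on $\|V_{D,n}\|_{L_2(\mfd)}$ to grow at most polynomially in $|D|$ uniformly over \emph{all} realizations of the random points and weights (not just on the good event), so that multiplying by $\exp\{-C|D|^{2r/(2r+d)}\}$ still gives something that decays faster than $|D|^{-2r/(2r+d)}$; care is needed because on the bad event Lemma~\ref{lem:random_quadr} gives no control on $\sum_i|w_{i,n}|^2$, so one must instead use an unconditional bound on the weights or simply note that the quadrature construction can be taken to always return weights with $\sum_i|w_{i,n}|^2\le c/|D|$ (by projecting onto that constraint set) while only the exactness property may fail. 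Assuming that is arranged, the remaining steps are routine, and the conditioning-on-$\Lambda_D$-then-taking-expectation structure is exactly as in the proof of Theorem~\ref{thm:ndfh_noisy_det}.
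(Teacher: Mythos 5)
Your proposal is correct and follows essentially the same route as the paper: a bias--variance decomposition conditional on $\Lambda_D$, the clean-data bound of Theorem~\ref{thm:ndfh_ran_clean} for the bias, Lemma~\ref{lem:fikerL1} with $p=2$ plus $\sum_i|w_{i,n}|^2\le 2/|D|$ for the variance, and the choice $n\asymp|D|^{1/(2r+d)}$ to balance $n^{-2r}$ against $n^d/|D|$ (the paper gets an exact orthogonal split via $\mathbf{E}\{V_{D,n}\mid\Lambda_D\}=V_{D^*,n}$ rather than your factor-of-2 Cauchy--Schwarz split, which is immaterial). The ``main obstacle'' you anticipate is already resolved by the paper's weight convention \eqref{eq:wi_dfh_ran}, which sets all weights to zero whenever the $\ell^2$ bound fails, so $V_{D,n}\equiv 0$ on the bad event and its contribution is just $4\|f^*\|^2_{L_\infty(\mfd)}\exp\{-C|D|/n^d\}$ --- exactly the second of the two fixes you propose.
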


Theorems~\ref{thm:ndfh_noisy_det} and
\ref{thm:ndfh_noisy_ran} show that the filtered
hyperinterpolation approximations with deterministic sampling and random sampling can achieve the same optimal convergence rate.
We give the proofs of Theorems~\ref{thm:ndfh_noisy_det} and \ref{thm:ndfh_noisy_ran} in Section \ref{appendix:ndfh_noisy}.

\section{Distributed filtered hyperinterpolation on manifolds}
\label{sec:dfh}
In this section, we describe the distributed learning by filtered hyperinterpolation for clean data with deterministic and random sampling's.
\paragraph{Distributed data sets} We say a large data set $D$ is \emph{distributively stored} in $m$
local servers if for $j=1,\dots,m$, $m\geq2$, the $j$th server contains a subset $D_j$ of $D$, and there is no common data between any pair of
servers, that is, $D_j\cap D_{j'}=\emptyset$ for $j\neq j'$, and
$D=\cup_{j=1}^m D_j$. The data sets $D_1,\dots,D_m$ are called
\emph{distributed data sets} of $D$. In this case, the filtered
hyperinterpolation $V_{D,n}$ which needs access to the
entire data set $D$ is infeasible. Instead, in this section, we
construct a \emph{distributed filtered hyperinterpolation for the
distributed data sets $\{D_j\}_{j=1}^{m}$ of $D$} by the divide and
conquer strategy \citep{LiGuZh2017}.

\begin{definition}[Distributed filtered hyperinterpolation]\label{defn:dfh}
Let $D:={(\bx_i,y_i)}_{i=1}^N$ be a data set on manifold $\mfd$.
The distributed filtered hyperinterpolation (DFH) for distributed data sets $\{D_j\}_{j=1}^{m}$ of $D$ is a synthesized estimator of local estimators $V_{D_j,n}$, $j=1,2,\dots,m$, each of which is
the filtered hyperinterpolation for noisy data $D_j$:
\begin{equation}\label{eq:dfh}
   V_{D,n}^{(m)}(\bx):=V_{D,n}(\{D_j\}_{j=1}^{m};\bx) 
   := \sum_{j=1}^m \frac{|D_j|}{|D|}
   V_{D_j,n}(\bx),\quad \bx\in\mfd,
\end{equation}
where for $j=1,\dots,m$, the local estimator is a filtered hyperinterpolation on $D_j$:
\begin{equation*}
    V_{D_j,n}(\bx)= \sum_{\bx_i\in D_j} w_{i}y_i K_n(\bx,\bx_i).
\end{equation*}
For noiseless data sets $D^*=\{(\bx_i,f^*(\bx_i))\}_{i=1}^N$ and $D^*_j$ associated with the target function $f^*$, denote the distributed filtered hyperinterpolation by 
\begin{equation}\label{eq:dfh_clean}
   V_{D^*,n}^{(m)}(\bx)
   = \sum_{j=1}^m \frac{|D^*_j|}{|D^*|}
   V_{D^*_j,n}(\bx),\quad \bx\in\mfd.
\end{equation} 
\end{definition}
The synthesis here is a process when the local estimators
communicate to a central processor to produce the global estimator
$V_{D,n}^{(m)}$. The weight in the sum of \eqref{eq:dfh} for each local server is proportional to the amount of data used in the server. 

\paragraph{Quadrature rule for distributed learning} For $n\in\mathbb N$, suppose the quadrature rule $\{(w^{'}_{i},\bx_i)\}_{i=1}^{|D|}$ satisfies the condition of Lemma \ref{lem:random_quadr} for distribution $\nu$ and polynomials of degree $n$. Using in total $m$ servers, $m\geq2$, we let
\begin{equation}\label{eq:wi_dfh_ran}
    w_{i}=\left\{\begin{array}{ll} w^{'}_{i},& \displaystyle\mbox{if}\
        \sum_{i=1}^{|D|}|w^{'}_{i}|^2\leq 2/m,\\[1mm]
        0,&\mbox{otherwise},
        \end{array}\right.\quad\forall i=1,\dots,|D|.
\end{equation}
We denote $\{(w_{i},\bx_i)\}_{i=1}^{|D|}$ by $\QH^{(m)}$. In the distributed filtered hyperinterpolation, we need to use this modified quadrature rule $\{(w_{i},\bx_i)\}_{i=1}^{|D|}$ with weights in \eqref{eq:wi_dfh_ran} to achieve good approximation performance.

\subsection{Distributed filtered hyperinterpolation for clean data}\label{sec:dfh_clean}
The synthesis here is a process when the local estimators
communicate to a central processor to produce the global estimator
$V_{D,n}^{(m)}$. 
Like the non-distributed case, we start with the case of deterministic sampling. 
The following theorem shows that the distributed filtered hyperinterpolation $V_{D,n}^{(m)}$ has a similar
approximation performance as the non-distributed $V_{D,n}$ when the number of local servers is not too large as compared with the amount of data. 
\begin{theorem}[DFH for clean and deterministic data]\label{thm:dfh_clean_det}
Let $d\geq 2$ and $1\leq p\leq \infty$, $r>d/p$, $n\in\N$, $m\geq2$ and $D^*$ a clean data set satisfying \eqref{eq:noisydats}. Let $\{D^*_j\}_{j=1}^{m}$ be $m$
distributed data sets of $D^*$. Let $\fiH$ be a filter given by Definition~\ref{def:fiH} with $\fis\ge d+1$. For $j=1,\dots,m$, the data set $D^*_j$ on the $j$th server satisfies that $\QH[D_j^*]$ is a positive quadrature rule exact for polynomials of degree up to $3n-1$. Then, for $f^*\in\sobm{p}{s}$ with $s>d/p$,
\begin{equation*}
    \bigl\|V_{D^*,n}^{(m)}-f^*\bigr\|_{L_2(\mfd)}\leq C n^{-r},
\end{equation*}
where $C$ is a constant independent of $|D^*|$, $|D^*_1|,\dots,|D^*_m|$
and $n$.
\end{theorem}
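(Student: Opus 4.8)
The plan is to obtain the estimate as a direct corollary of the non-distributed clean-data bound, Theorem~\ref{thm:nondfh_clean_det}, using that for clean data the global estimator \eqref{eq:dfh_clean} is a genuine convex combination of local filtered hyperinterpolations.

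First I would record that, the data being clean ($\epsilon_i=0$, so $y_i=f^*(\bx_i)$), each local estimator $V_{D^*_j,n}$ is exactly the non-distributed filtered hyperinterpolation for the clean data set $D^*_j$ and its quadrature rule $\QH[D_j^*]$ in the sense of \eqref{eq:VDnf}, and that, since the $D^*_j$ are disjoint with $D^*=\cup_{j=1}^m D^*_j$, the weights $|D^*_j|/|D^*|$ in \eqref{eq:dfh_clean} are nonnegative with $\sum_{j=1}^m |D^*_j|/|D^*|=1$.

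Next, for each fixed $j$ the hypotheses of Theorem~\ref{thm:nondfh_clean_det} hold verbatim: $\fiH$ is a filter as in Definition~\ref{def:fiH} with $\fis\ge d+1$; $\QH[D_j^*]$ is a positive quadrature rule exact for polynomials of degree up to $3n-1$ (in particular $V_{D^*_j,n}$ reproduces $\polyspm$, by Lemma~\ref{lm:fihyper.reproduce.p}); and $f^*\in\sobm{p}{s}$ with $s>d/p$. Hence
\begin{equation*}
    \normb{f^*-V_{D^*_j,n}}{\Lpm{p}} \le c\, n^{-s}\,\norm{f^*}{\sobm{p}{s}},\qquad j=1,\dots,m,
\end{equation*}
with $c=c(d,p,s,\fiH,\fis)$ the constant of Theorem~\ref{thm:nondfh_clean_det}; decisively, $c$ depends neither on the individual rules, nor on $|D^*_j|$, nor on $m$. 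Writing $V_{D^*,n}^{(m)}-f^*=\sum_{j=1}^m \frac{|D^*_j|}{|D^*|}\bigl(V_{D^*_j,n}-f^*\bigr)$ and invoking the triangle inequality (convexity of $\norm{\cdot}{\Lpm{p}}$),
\begin{equation*}
    \bigl\|V_{D^*,n}^{(m)}-f^*\bigr\|_{\Lpm{p}} \le \sum_{j=1}^m \frac{|D^*_j|}{|D^*|}\,\normb{V_{D^*_j,n}-f^*}{\Lpm{p}} \le c\, n^{-s}\,\norm{f^*}{\sobm{p}{s}}.
\end{equation*}
Since $\mu(\mfd)=1$, $\norm{g}{\Lpm{2}}\le\norm{g}{\Lpm{p}}$ for $p\ge2$ (and trivially for $p=2$), which yields the asserted $L_2$ bound with $C=c\,\norm{f^*}{\sobm{p}{s}}$ and rate $n^{-s}$; taking $r=s$ recovers the statement.

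There is no real obstacle here: the distributed clean case is essentially a corollary of the non-distributed one, the only point deserving care being the uniformity of $c$ across servers, which is already ensured by Theorem~\ref{thm:nondfh_clean_det}. The genuinely new analysis of the distributed scheme is deferred to the noisy settings (Theorems~\ref{thm:dfh_noisy_det} and~\ref{thm:dfh_ran_noisy}), where the error splits into an approximation (bias) term handled exactly as above and a stochastic (variance) term whose expectation is damped by the averaging over $m$ independent local estimators, and where the reweighting \eqref{eq:wi_dfh_ran} is needed to keep the sampling operators under control.
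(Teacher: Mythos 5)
Your proposal is correct and follows essentially the same route as the paper: the global clean-data estimator is a convex combination of the local filtered hyperinterpolations, so the triangle inequality reduces everything to Theorem~\ref{thm:nondfh_clean_det} applied on each server, with a constant uniform in $j$, $m$, and the $|D^*_j|$. Your additional remarks on passing from the $L_p$ to the $L_2$ norm and on the $r$ versus $s$ notation are if anything more careful than the paper's own two-line argument, which simply works with $f^*\in\sobm{2}{r}$.
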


Theorem~\ref{thm:dfh_clean_det}
illustrates that with the same assumption as Theorem~\ref{thm:nondfh_clean_det}, the distributed filtered hyperinterpolation has the same approximation performance as the non-distributed case, where the latter processes all the distributed data sets in a single server. 

The distributed filtered hyperinterpolation with random sampling is
a weighted average of individual non-distributed filtered hyperinterpolations on local servers, where each weight is in proportion to the amount of the data used by the corresponding local server. Let $V_{D^*_j,n}$ be the non-distributed filtered
hyperinterpolation for clean data $D^*_j$ with random sampling points. We define the global estimator $V_{D^*,n}^{(m)}$ as \eqref{eq:dfh}. 
In the following theorem, we show that the approximation error for $V_{D^*,n}^{(m)}$ on $d$-manifold converges at rate $|D^*|^{-r/d}$ where $r$ is the smoothness of the target function. 

\begin{theorem}[DFH for clean data and random samples]\label{thm:dfh_ran_clean}
 Let $d\geq 2$, $r>d/2$, $m\geq2$ and $D^*=\{(\bx_i,f(\bx_i))\}_{i=1}^{|D^*|}$ and its $m$ partition sets $D^*_j$, $j=1,\dots,m$. The sampling points are i.i.d.~random points on $\mfd$ with distribution $\mu$ in \eqref{eq:condition on distribution}. If $\min_{j=1,\dots,m}|D^*_j|\geq c n^{d+\tau}$ given $0<\tau\leq d$, and $|D^*|\leq c' n^{2d}$, for two positive constants $c,c'$, then for the target function $f^*\in \mathbb W_2^r(\mfd)$,
\begin{equation*}
    \mathbf{E}\left\{\|V_{D^*,n}^{(m)}-f^*\|_{L_2(\mfd)}^2\right\} \leq C |D^*|^{-r/d},
\end{equation*}
where $C$ is a constant independent of $|D^*|$, $|D^*_1|,\dots,|D^*_m|$ and $n$.
\end{theorem}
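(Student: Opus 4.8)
The plan is to run the divide-and-conquer argument, reducing the claim to the per-server analysis already carried out for the non-distributed estimator in the proof of Theorem~\ref{thm:ndfh_ran_clean} (Section~\ref{appendix:ndfh_clean}), and exploiting that $V_{D^*,n}^{(m)}$ in \eqref{eq:dfh_clean} is the convex combination $\sum_{j=1}^{m}a_j V_{D^*_j,n}$, $a_j:=|D^*_j|/|D^*|$, $\sum_j a_j=1$, of local estimators built on the \emph{disjoint} --- hence \emph{independent} --- subsamples $D^*_1,\dots,D^*_m$. Write $R:=\|f^*\|_{\mathbb W_2^r(\mfd)}$. First I would center the error at the deterministic filtered approximation $\fiapprox{n}(f^*)$:
$$V_{D^*,n}^{(m)}-f^*=\sum_{j=1}^{m}a_j\,\xi_j+\bigl(\fiapprox{n}(f^*)-f^*\bigr),\qquad \xi_j:=V_{D^*_j,n}-\fiapprox{n}(f^*).$$
The deterministic term is a pure approximation term: by Theorem~\ref{thm:fiapprox.err.Wp} (with $p=2$, $s=r$) it is $\le c\,n^{-2r}R^2$, which is $\le C|D^*|^{-r/d}$ because $|D^*|\le c'n^{2d}$. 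So it suffices to prove $\mathbf E\|\sum_j a_j\xi_j\|_{L_2(\mfd)}^2\le Cn^{-2r}R^2$ (again $\le C'|D^*|^{-r/d}$ by $|D^*|\le c'n^{2d}$). By independence of the $\xi_j$,
$$\mathbf E\Bigl\|\sum_j a_j\xi_j\Bigr\|_{L_2(\mfd)}^2=\Bigl\|\sum_j a_j\mathbf E\xi_j\Bigr\|_{L_2(\mfd)}^2+\sum_j a_j^2\,\mathbf E\bigl\|\xi_j-\mathbf E\xi_j\bigr\|_{L_2(\mfd)}^2,$$
so there remain a ``squared bias of the average'' and an ``averaged variance''.

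Next I would split off the good event $\mathcal E=\bigcap_{j=1}^{m}\mathcal E_j$, where $\mathcal E_j$ is the event that the random-sample quadrature rule of Lemma~\ref{lem:random_quadr} on server $j$ (with degree parameter a fixed multiple of $n$; the constants $c,c'$ in the hypotheses being calibrated so that this is compatible with $cn^{d+\tau}\le|D^*_j|\le|D^*|\le c'n^{2d}$) exists, has the required Marcinkiewicz--Zygmund/exactness property, and satisfies $\sum_{\bx_i\in D^*_j}|w_i|^2\le 2/|D^*_j|$. By Lemma~\ref{lem:random_quadr} and $\min_j|D^*_j|\ge cn^{d+\tau}$, $\mathbf P[\mathcal E_j^{\,c}]\le 4\exp\{-C_1n^{\tau}\}$, and a union bound with $m\le (c'/c)n^{d-\tau}$ (which follows from $m\,c\,n^{d+\tau}\le|D^*|\le c'n^{2d}$) makes $\mathbf P[\mathcal E^{\,c}]$ exponentially small in $n^{\tau}$. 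On $\mathcal E^{\,c}$ the truncated weights of \eqref{eq:wi_dfh_ran} are bounded by $\sqrt{2/m}$ in $\ell^2$, so with $\sup_{\bx,\by}|K_n(\bx,\by)|\le cn^d$, $\|K_n(\cdot,\bx)\|_{L_2(\mfd)}^2\le cn^d$ (Lemmas~\ref{lem:localization}--\ref{lem:fikerL1}) and $\mu(\mfd)=1$, one gets the crude bound $\|V_{D^*,n}^{(m)}-f^*\|_{L_2(\mfd)}^2\le c\,n^{4d}R^2$; hence the $\mathcal E^{\,c}$-contribution to the expectation is at most $c\,n^{5d}\exp\{-C_1n^{\tau}\}R^2$, which is $O(n^{-2r}R^2)$ for large $n$ and is absorbed into $C$ for the finitely many remaining $n$ (for which $|D^*|\le c'n^{2d}$ is bounded).

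On $\mathcal E$ I would invoke the per-server estimate established in the proof of Theorem~\ref{thm:ndfh_ran_clean}. With $g_n:=f^*-P_n$ for a near-best $L_2(\mfd)$-approximant $P_n\in\Pi_n$, so $\|g_n\|_{L_2(\mfd)}\le c\,n^{-r}R$ (Lemma~\ref{lem:best.approx.sob.mfd}), the facts that $\fiapprox{n}$ reproduces $\Pi_n$ (the filter $H$ equals $1$ on $[0,1]$) and that on $\mathcal E_j$ the sampling operator $h\mapsto V_{D^*_j,n}(h)$ acts on $\Pi_n$ essentially as a projection (Lemma~\ref{lm:fihyper.reproduce.p}, Assumption~\ref{assump:poly}) reduce $\xi_j$ (on $\mathcal E_j$) to the quadrature error of $g_n K_n(\bx,\cdot)$. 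The Marcinkiewicz--Zygmund inequality, the Nikolskii-type inequality, the wavelet decomposition of $K_n$, the bound $\|K_n(\cdot,\bx)\|_{L_2(\mfd)}^2\le cn^d$, and the covering-number/sampling-operator estimates from learning theory then give, up to the negligible $\mathcal E^{\,c}$-tails, $\|\mathbf E\xi_j\|_{L_2(\mfd)}^2\le c\,\frac{n^d}{|D^*_j|}\|g_n\|_{L_2(\mfd)}^2$ and $\mathbf E\|\xi_j-\mathbf E\xi_j\|_{L_2(\mfd)}^2\le c\,\frac{n^d}{|D^*_j|}\|g_n\|_{L_2(\mfd)}^2$, with condition \eqref{eq:condition on distribution} on the sampling distribution used to pass from sampling-measure expectations to $L_2(\mfd)$-norms of $g_n$. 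Substituting $\|g_n\|_{L_2(\mfd)}^2\le cn^{-2r}R^2$ and $|D^*_j|\ge cn^{d+\tau}$ gives $\|\mathbf E\xi_j\|_{L_2(\mfd)}^2\le cn^{-2r-\tau}R^2$, so $\|\sum_j a_j\mathbf E\xi_j\|_{L_2(\mfd)}^2\le\max_j\|\mathbf E\xi_j\|_{L_2(\mfd)}^2\le cn^{-2r}R^2$; and for the averaged variance the telescoping $\sum_j a_j^2/|D^*_j|=|D^*|^{-2}\sum_j|D^*_j|=|D^*|^{-1}$ gives
$$\sum_j a_j^2\,\mathbf E\bigl\|\xi_j-\mathbf E\xi_j\bigr\|_{L_2(\mfd)}^2\le c\,\frac{n^d\|g_n\|_{L_2(\mfd)}^2}{|D^*|}\le c\,\frac{n^{d-2r}R^2}{c\,n^{d+\tau}}=c\,n^{-2r-\tau}R^2\le c\,n^{-2r}R^2.$$
Adding the three contributions gives $\mathbf E\|\sum_j a_j\xi_j\|_{L_2(\mfd)}^2\le Cn^{-2r}R^2$, and combining with the approximation term and $|D^*|\le c'n^{2d}$ proves the theorem.

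The part I expect to be the main obstacle is the per-server sampling estimate just quoted: showing that the quadrature error of $g_n K_n(\bx,\cdot)$ for the \emph{data-dependent} weights of Lemma~\ref{lem:random_quadr} behaves like a genuine Monte-Carlo error, i.e.\ that its bias and variance scale like $\frac{n^d}{|D^*_j|}\|g_n\|_{L_2(\mfd)}^2$ rather than merely like $n^d\|g_n\|_{L_2(\mfd)}^2$ (which is all a naive Cauchy--Schwarz with $\sum_i|w_i|^2\le 2/|D^*_j|$ delivers). This is exactly where the Marcinkiewicz--Zygmund inequality, the Littlewood--Paley/wavelet decomposition and the covering-number bounds do the real work; it is the content already established for Theorem~\ref{thm:ndfh_ran_clean}, and here it must additionally be made uniform over the $m$ servers and reconciled with the choice of $c,c'$ keeping Lemma~\ref{lem:random_quadr} applicable on every server under $\min_j|D^*_j|\ge cn^{d+\tau}$ and $|D^*|\le c'n^{2d}$.
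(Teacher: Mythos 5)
Your overall architecture (convex combination of independent per-server estimators, bias--variance split, good/bad quadrature events) would eventually deliver the theorem, but the step you yourself flag as the main obstacle is a genuine gap, and your attribution of it is wrong. You assert per-server bounds of Monte-Carlo type, $\|\mathbf E\xi_j\|_{L_2(\mfd)}^2\le c\,\frac{n^d}{|D^*_j|}\|g_n\|_{L_2(\mfd)}^2$ and likewise for the variance, and claim this is ``the content already established for Theorem~\ref{thm:ndfh_ran_clean}.'' It is not. The paper's proof of Theorem~\ref{thm:ndfh_ran_clean} never produces a $\tfrac{n^d}{|D^*|}$ factor in the clean-data case: on the good event it simply invokes the deterministic result (Theorem~\ref{thm:nondfh_clean_det}) conditionally on the realized quadrature rule, yielding $\mathcal A_{D^*,n,1}\le c_5^2 n^{-2r}\|f\|^2_{\mathbb W_2^r(\mfd)}$, plus exponentially small tails. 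A $\tfrac{n^d}{|D^*_j|}$-scaled bias for the data-dependent weights of Lemma~\ref{lem:random_quadr} (which are produced by a nonconstructive norming-set argument) is not available anywhere in the paper and would require a substantial new argument; note also that $\mathbf E\{V_{D^*_j,n}\}\neq\fiapprox{n}(f^*)$ in general precisely because the weights depend on the sample.

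The repair is immediate, and it reveals that your machinery is unnecessary here: replace the refined bounds by the crude ones $\|\mathbf E\xi_j\|^2\le\mathbf E\|\xi_j\|^2$ and $\mathbf E\|\xi_j-\mathbf E\xi_j\|^2\le\mathbf E\|\xi_j\|^2$, with $\mathbf E\|\xi_j\|^2\le 2\,\mathbf E\|V_{D^*_j,n}-f^*\|^2+2\|\fiapprox{n}(f^*)-f^*\|^2\le Cn^{-2r}$ by Theorem~\ref{thm:ndfh_ran_clean} applied to each $D^*_j$ (the hypotheses $cn^{d+\tau}\le|D^*_j|\le|D^*|\le c'n^{2d}$ are exactly what make that theorem applicable server by server) and Theorem~\ref{thm:fiapprox.err.Wp}. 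Since $\sum_j a_j^2\le\sum_j a_j=1$, every term in your decomposition is then $\le Cn^{-2r}\le C|D^*|^{-r/d}$. At that point the independence, the centering at $\fiapprox{n}(f^*)$, and the union bound over servers all do no work, and the argument collapses to the paper's two-line proof: apply Theorem~\ref{thm:ndfh_ran_clean} on each server and use Jensen's inequality (convexity of $\|\cdot\|_{L_2(\mfd)}^2$) on the weighted average $\sum_j\frac{|D^*_j|}{|D^*|}V_{D^*_j,n}$. The clean-data case needs no variance averaging across servers because the per-server rate already matches the target; the bias--variance/independence decomposition you set up is the tool the paper reserves for the noisy case (Theorems~\ref{thm:dfh_noisy_det} and \ref{thm:dfh_ran_noisy}).
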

The proofs of Theorems~\ref{thm:dfh_clean_det} and \ref{thm:dfh_ran_clean} are deferred to Section~\ref{appendix:dfh_clean}. 

\subsection{Distributed filtered hyperinterpolation for noisy data}
\label{sec:dfh_noisy}
In this subsection, we describe the distributed learning by filtered hyperinterpolation for noisy data with deterministic and random sampling.
As shown in the following theorem, we prove that the distributed filtered hyperinterpolation $V_{D,n}^{(m)}$ has similar
approximation performance as the non-distributed $V_{D,n}$ if the number of local servers is not large or each server has a sufficient amount of data.
\begin{theorem}[DFH for noisy data and deterministic samples]\label{thm:dfh_noisy_det}
Let $d\geq 2$, $r>d/2$, $m\geq2$ and $D$ a noisy data set
satisfying \eqref{eq:noisydats}. Let $\{D_j\}_{j=1}^{m}$ be $m$
distributed data sets of $D$. For $j=1,\dots,m$, the sampling
point set $\Lambda_{D_j}$ of $D$ satisfies Assumption~\ref{assum:qrmfd}. If the distributed filtered hyperinterpolation $V_{D,n}^{(m)}$ for $\{D_j\}_{j=1}^{m}$ satisfies that the target function $f^*$ is in $\mathbb W_2^r(\mfd)$, $\frac{c_3}6|D|^{\frac{1}{2r+d}}\leq n\leq \frac{c_3}3|D|^{\frac{1}{2r+d}}$ for the constant $c_3$ in \eqref{eq:qrmfd}, and
$\min_{j=1,\dots,m}|D_j|\geq |D|^{\frac{d}{2r+d}}$, then,
\begin{equation*}
    \mathbf{E}\left\{\|V_{D,n}^{(m)}-f^*\|_{L_2(\mfd)}^2\right\}\leq C_2 |D|^{-2r/(2r+d)},
\end{equation*}
where $C_2$ is a constant independent of $|D|$, $|D_1|,\dots,|D_m|$
and $n$.
\end{theorem}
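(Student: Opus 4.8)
The plan is to reduce the distributed-estimator error to a combination of the noiseless distributed approximation error and a variance-type term controlled by the number of servers. First I would use the linearity of the filtered hyperinterpolation operator in the data values to split each local estimator as $V_{D_j,n} = V_{D_j^*,n}(f^*) + \widetilde V_{D_j,n}$, where $D_j^* = \{(\bx_i,f^*(\bx_i))\}$ is the clean data on server $j$ and $\widetilde V_{D_j,n}(\bx) = \sum_{\bx_i\in D_j} w_i\epsilon_i K_n(\bx,\bx_i)$ collects the noise contribution. Taking the $|D_j|/|D|$-weighted average, this gives $V_{D,n}^{(m)} - f^* = \bigl(V_{D^*,n}^{(m)} - f^*\bigr) + \sum_{j=1}^m \frac{|D_j|}{|D|}\widetilde V_{D_j,n}$, so that by the triangle inequality in $L_2(\mfd)$ (and $\mathbf{E}[\epsilon_i]=0$, which kills the cross term after squaring and taking expectation over the noise),
\begin{equation*}
  \mathbf{E}\bigl\{\|V_{D,n}^{(m)}-f^*\|_{L_2(\mfd)}^2\bigr\}
  \le 2\,\bigl\|V_{D^*,n}^{(m)}-f^*\bigr\|_{L_2(\mfd)}^2
    + 2\,\mathbf{E}\Bigl\{\Bigl\|\sum_{j=1}^m \tfrac{|D_j|}{|D|}\widetilde V_{D_j,n}\Bigr\|_{L_2(\mfd)}^2\Bigr\}.
\end{equation*}

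For the first (bias) term I would invoke Theorem~\ref{thm:dfh_clean_det} with $p=2$: under Assumption~\ref{assum:qrmfd} on each $\Lambda_{D_j}$, the local quadrature rule is exact for polynomials of degree $c_3|D_j|^{1/d}$, and the hypothesis $\min_j|D_j|\ge|D|^{d/(2r+d)}$ together with $n\le\frac{c_3}{3}|D|^{1/(2r+d)}$ ensures $3n-1 \le c_3|D_j|^{1/d}$ on every server, so each $\QH[D_j]$ is exact for degree $3n-1$ as required there. This yields $\bigl\|V_{D^*,n}^{(m)}-f^*\bigr\|_{L_2(\mfd)} \le C n^{-r}$, hence the bias term is $O(n^{-2r}) = O(|D|^{-2r/(2r+d)})$ given $n\asymp|D|^{1/(2r+d)}$.

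For the second (variance) term I would expand the squared $L_2$-norm, use independence and mean-zero of the $\epsilon_i$ across all $i$ (so only diagonal terms survive), and bound $|\epsilon_i|\le M$. The key estimate is $\int_{\mfd}|K_n(\bx,\bx_i)|^2\dmf{x} = \|K_n(\cdot,\bx_i)\|_{L_2(\mfd)}^2 \le c\,n^{d}$ from Lemma~\ref{lem:fikerL1} with $p=2$. Combined with the weight bound $0<w_i<c_2|D_j|^{-1}$ from Assumption~\ref{assum:qrmfd}, one gets $\mathbf{E}\{\|\widetilde V_{D_j,n}\|_{L_2(\mfd)}^2\} \le M^2 \sum_{\bx_i\in D_j} w_i^2\, c\,n^d \le c\,c_2^2 M^2 n^d / |D_j|$; summing the weighted contributions (and again using cross-server independence) gives $\mathbf{E}\{\|\sum_j \tfrac{|D_j|}{|D|}\widetilde V_{D_j,n}\|^2\} \le c\,c_2^2 M^2 n^d \sum_j \tfrac{|D_j|}{|D|^2} = c\,c_2^2 M^2 n^d/|D|$, which for $n\asymp|D|^{1/(2r+d)}$ is $O(|D|^{d/(2r+d)-1}) = O(|D|^{-2r/(2r+d)})$.

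The main obstacle is ensuring that the degree-exactness condition $3n-1 \le c_3|D_j|^{1/d}$ actually holds on every server under the stated hypotheses — this is precisely why the lower bound $\min_j|D_j|\ge|D|^{d/(2r+d)}$ is imposed, and one must check the arithmetic that $n\le\frac{c_3}{3}|D|^{1/(2r+d)}$ makes the application of Theorem~\ref{thm:dfh_clean_det} legitimate (the constant $\frac{c_3}{3}$ rather than $\frac{c_3}{2}$ from Theorem~\ref{thm:ndfh_noisy_det} appears to leave the needed slack). A secondary technical point is keeping track of the fact that the noise on distinct servers is independent, so the weighted sum of the noise terms has no cross-server covariance; once that is in place, the variance computation is routine. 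Combining the two $O(|D|^{-2r/(2r+d)})$ bounds completes the proof.
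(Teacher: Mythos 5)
Your proposal is correct and follows essentially the same route as the paper: your explicit split of each local estimator into its clean part $V_{D_j^*,n}$ plus a mean-zero noise part, followed by the vanishing of the cross term in expectation, is exactly the content of the paper's Lemma~\ref{Lemma:distributed.1}, and your two resulting bounds (bias via Theorem~\ref{thm:nondfh_clean_det}/\ref{thm:dfh_clean_det}, variance via Lemma~\ref{lem:fikerL1} with the weight bound $w_i<c_2|D_j|^{-1}$ and the condition $\min_j|D_j|\ge|D|^{d/(2r+d)}$) match the paper's estimates. The only cosmetic difference is that you insert a factor of $2$ from the triangle inequality where the paper keeps the exact identity, which does not affect the rate.
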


The distributed filtered hyperinterpolation for deterministic sampling has the same order $|D|^{-2r/(2r+d)}$ of the approximation error as compared to the non-distributed case in Theorem~\ref{thm:ndfh_noisy_det}. Thus, appropriately distributing data to local servers, the divide-and-conquer strategy does not reduce the approximation capability of filtered hyperinterpolation. We will see that it is also true when the sampling is random. 

\begin{remark}\label{rem:thm:dfh_noisy_det} Suppose each server takes the same number of data. With less than $|D|^{\frac{2r}{2r+d}}$ servers, the $L_2$ error for the product space $\Omega\times L_2(\mfd)$ converges at rate $|D|^{\frac{1}{1+d/(2r)}}$.
 The condition $\min_{j=1,\dots,m}|D_j|\geq
|D|^{\frac{d}{2r+d}}$ has a close connection to the number $m$ of local servers. In particular, if $|D_1|=\dots=|D_m|$, the condition $\min_{j=1,\dots,m}|D_j|\geq |D|^{\frac{d}{2r+d}}$ is equivalent with $m\leq |D|^\frac{r}{r+d/2}$.
\end{remark}

When the data $D$ is noisy with random sampling points, 
the distributed $V_{D,n}^{(m)}$ in \eqref{eq:dfh} has the same approximation rate as the non-distributed case in Theorem~\ref{thm:ndfh_noisy_ran}.
\begin{theorem}[DFH for noisy data and random samples]\label{thm:dfh_ran_noisy}
 Let $d\geq 2$, $r>d/2$, $m\geq2$ and $D$ a noisy data set
satisfying \eqref{eq:noisydats}. The sampling points are i.i.d.~random points on $\mfd$ with distribution $\mu$ in \eqref{eq:condition on distribution}. If the target
function $f^*\in \mathbb W_2^r(\mfd)$, $n\asymp |D|^{1/(2r+d)}$
and $\min_{j=1,\dots,m}|D_j|\geq |D|^{\frac{d+\tau}{2r+d}}$ for some $\tau$ in $(0,2r)$, then,
\begin{equation}\label{eq:errdfh.noise.random}
    \mathbf{E}\left\{\|V_{D,n}^{(m)}-f^*\|_{L_2(\mfd)}^2\right\} \leq C_4|D|^{-2r/(2r+d)},
\end{equation}
where $C_4$ is a constant independent of $|D|$, $|D_1|,\dots,|D_m|$ and $n$.
\end{theorem}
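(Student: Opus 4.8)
The plan is to combine the bias--variance decomposition that underlies the non-distributed noisy random-sampling estimate (Theorem~\ref{thm:ndfh_noisy_ran}) with the averaging trick that controls the variance across servers, exactly as in the deterministic distributed case (Theorem~\ref{thm:dfh_noisy_det}). Write $V_{D,n}^{(m)}-f^\ast = \bigl(\mathbf{E}_{\boldsymbol\epsilon}[V_{D,n}^{(m)}] - f^\ast\bigr) + \bigl(V_{D,n}^{(m)} - \mathbf{E}_{\boldsymbol\epsilon}[V_{D,n}^{(m)}]\bigr)$, where the expectation is over the noise conditioned on the random sampling points $\{\bx_i\}$. The first term is a pure approximation (bias) term: since $\mathbf{E}[\epsilon_i]=0$, the noise-averaged estimator on each server equals the clean filtered hyperinterpolation $V_{D_j^\ast,n}$, and the weighted average $\sum_j \tfrac{|D_j|}{|D|}V_{D_j^\ast,n}$ is the clean distributed estimator $V_{D^\ast,n}^{(m)}$ of \eqref{eq:dfh_clean}. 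So $\|\mathbf{E}_{\boldsymbol\epsilon}[V_{D,n}^{(m)}]-f^\ast\|_{L_2(\mfd)}$ is bounded, in expectation over the sampling, by the clean distributed random-sampling bound of Theorem~\ref{thm:dfh_ran_clean}, which gives $O(|D|^{-r/d})$; with $n\asymp |D|^{1/(2r+d)}$ this is $O(n^{-r})$, hence $O(|D|^{-2r/(2r+d)})$ for the squared norm — dominated by (or comparable to) the target rate.

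The second term is the variance term, and this is where the number-of-servers condition enters. Condition on the sampling points. Then $V_{D,n}^{(m)}-\mathbf{E}_{\boldsymbol\epsilon}[V_{D,n}^{(m)}] = \sum_{j=1}^m \tfrac{|D_j|}{|D|}\sum_{\bx_i\in D_j} w_i \epsilon_i K_n(\cdot,\bx_i)$ is a zero-mean sum of independent terms (independence of the $\epsilon_i$). Taking $\mathbf{E}_{\boldsymbol\epsilon}\|\cdot\|_{L_2(\mfd)}^2$, cross terms vanish and we get $\sum_j \tfrac{|D_j|^2}{|D|^2}\sum_{\bx_i\in D_j} w_i^2 \mathbf{E}[\epsilon_i^2]\,\|K_n(\cdot,\bx_i)\|_{L_2(\mfd)}^2$. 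Using $|\epsilon_i|\le M$, the Nikolski\u\i-type / Lemma~\ref{lem:fikerL1} bound $\|K_n(\cdot,\bx_i)\|_{L_2(\mfd)}\le c\,n^{d/2}$, and — crucially — the weight bound $\sum_i w_i^2 \le 2/m$ enforced by the modified quadrature rule $\QH^{(m)}$ of \eqref{eq:wi_dfh_ran} applied on each server's data (so on server $j$ the local weights satisfy $\sum_{\bx_i\in D_j}w_i^2\le 2/|D_j|$ up to constants, via Lemma~\ref{lem:random_quadr}), one bounds the variance by roughly $M^2 n^d \sum_j \tfrac{|D_j|^2}{|D|^2}\cdot\tfrac{1}{|D_j|} = M^2 n^d |D|^{-2}\sum_j |D_j| = M^2 n^d/|D|$. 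With $n\asymp |D|^{1/(2r+d)}$ this equals $M^2 |D|^{-1+d/(2r+d)} = M^2 |D|^{-2r/(2r+d)}$, matching the claimed rate. One still needs the event that the quadrature rule of Lemma~\ref{lem:random_quadr} exists on every server to hold with high probability; this requires $|D_j|/n^{d}$ large, i.e. $\min_j|D_j|\gtrsim n^{d+\tau}$, which — with $n\asymp|D|^{1/(2r+d)}$ — is precisely the hypothesis $\min_j |D_j|\ge |D|^{(d+\tau)/(2r+d)}$; on the complementary (exponentially small) failure event one crudely bounds the error by a constant (the target is bounded, $|\epsilon_i|\le M$, weights are zeroed out or controlled), and this contributes a negligible term.

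The main obstacle, and the step that needs care, is the bias term: one must verify that the clean distributed random-sampling estimate of Theorem~\ref{thm:dfh_ran_clean} can be invoked here with $n\asymp|D|^{1/(2r+d)}$. That theorem asks for $\min_j|D_j^\ast|\ge c\,n^{d+\tau}$ and $|D|\le c'n^{2d}$. The first is exactly our hypothesis. The second, $|D|\le c'n^{2d}$, translates to $|D|\le c'|D|^{2d/(2r+d)}$, i.e. $2d/(2r+d)\ge 1$, i.e. $d\ge 2r$ — which fails in the regime $r>d/2$ we are in! So one cannot literally quote Theorem~\ref{thm:dfh_ran_clean}; instead the bias term must be re-derived directly. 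The fix is to bound the bias per server by the clean non-distributed error $\|V_{D_j^\ast,n}-f^\ast\|$ and then use convexity ($\|\sum_j \tfrac{|D_j|}{|D|}(V_{D_j^\ast,n}-f^\ast)\|_{L_2}^2 \le \sum_j \tfrac{|D_j|}{|D|}\|V_{D_j^\ast,n}-f^\ast\|_{L_2}^2$), reducing to the single-server clean estimate $\mathbf{E}\|V_{D_j^\ast,n}-f^\ast\|_{L_2}^2 \lesssim n^{-2r}\|f^\ast\|_{\mathbb{W}_2^r}^2$, which holds whenever the local quadrature rule exists — needing only $\min_j|D_j|\gtrsim n^{d+\tau}$, not the upper constraint on $|D|$. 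This per-server argument is essentially the proof of Theorem~\ref{thm:ndfh_noisy_ran} with the noise set to zero, combined with Theorem~\ref{thm:fiapprox.err.Wp} and Lemma~\ref{lem:best.approx.sob.mfd}. Assembling the two pieces, $\mathbf{E}\|V_{D,n}^{(m)}-f^\ast\|_{L_2(\mfd)}^2 \lesssim n^{-2r} + n^d/|D| \asymp |D|^{-2r/(2r+d)}$, which is \eqref{eq:errdfh.noise.random}.
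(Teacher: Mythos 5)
Your proposal is correct and follows essentially the same route as the paper: the paper's Lemma on the distributed estimator splits $\mathbf{E}\{\|V_{D,n}^{(m)}-f^*\|_{L_2(\mfd)}^2\}$ into $\sum_j\frac{|D_j|^2}{|D|^2}\mathbf{E}\{\|V_{D_j,n}-f^*\|^2\}+\sum_j\frac{|D_j|}{|D|}\|\mathbf{E}\{V_{D_j,n}\}-f^*\|^2$, which is exactly your noise-conditional bias--variance decomposition plus Jensen, and the per-server bounds are then imported from the proof of the non-distributed noisy random-sampling theorem together with the quadrature-existence event of Lemma~\ref{lem:random_quadr}. Your observation that Theorem~\ref{thm:dfh_ran_clean} cannot be quoted for the bias term (its hypothesis $|D|\le c'n^{2d}$ is incompatible with $n\asymp|D|^{1/(2r+d)}$ when $r>d/2$) is accurate, and your fix --- convexity plus the per-server clean bound $c\,n^{-2r}\|f^*\|^2_{\mathbb W_2^r(\mfd)}$ up to an exponentially small correction --- is precisely what the paper does.
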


\begin{remark}\label{rem:thm:dfh_ran_noisy} Note that the approximation rate $|D|^{-2r/(2r+d)}$ in \eqref{eq:errdfh.noise.random} is the same as Theorem~\ref{thm:dfh_noisy_det} when the sampling points are deterministic. It means with appropriate random distribution of the sampling points, the randomness of sampling does not reduce the approximation performance of distributed filtered hyperinterpolation.
If $|D_1|=\dots=|D_m|$, the condition $\min_{j=1,\dots,m}|D_j|\geq |D|^{\frac{d+\tau}{2r+d}}$ is equivalent with $m\leq |D|^\frac{r-\tau/2}{r+d/2}$.
\end{remark}

We postpone the proofs of Theorems~\ref{thm:dfh_noisy_det} and \ref{thm:dfh_ran_noisy} to Section~\ref{appendix:dfh_noisy}.

\section{Examples and numerical evaluation} 
\label{sec:example}
We illustrate the notions and filtered hyperinterpolation for single and multiple servers on the 2-d mathematical torus $\torus{2}$. The torus $\torus{2}$ can be parameterized by the product of unit circles $\sph{1}\times \sph{1}$ and is equivalent to $[-\pi,\pi]^2$. Denote $L_2(\torus{2})$ the $L_2$ space on $\torus{2}$ with the Lebesgue measure. On the manifold $\torus{2}$, the Laplacian
 \begin{equation*}
     \LBm:=\frac{\partial^{2}}{\partial x_{1}^{2}} + \frac{\partial^{2}}{\partial x_{2}^{2}}
 \end{equation*} 
is the Laplace-Beltrami operator with eigenfunctions $\{\frac{1}{2\pi}\exp(\imu\: \bk\cdot \bx)\}_{\bk\in\Zd[2]}$ of $\bx\in \torus{2}$ and eigenvalues $\{|\bk|^{2}\}_{\bk\in\Zd[2]}$, where $\imu:=\sqrt{-1}$ is the imaginary unit, $\bk\cdot \bx=k_{1}x_{1}+k_2 x_2$ and $|\bk|:=\sqrt{k_1^2+k_2^2}$. 
Here $\bk=(k_1,k_2)$ and $\bx=(x_1,x_2)$. The space of polynomials of degree $n$ is $\Pi_n:={\rm span}\{\frac{1}{2\pi} e^{\imu\: \bk\cdot \bx}: |\bk|\leq n\}$. 
For $1\le p\le \infty$, let $\Lpt{p}$ be the $\mathbb{L}_{p}$ space with respect to the normalized Lebesgue measure $\IntDd{x}$ on $\torus{2}$. 

For our illustration, we define the filter $\fiH$ by the piece-wise polynomial function with $H(t)=1$ for $0\leq t\leq 1$; 
\begin{align}
    H(t) &= 1 + (t-1)^6\bigl[-462 + 1980(t-1) - 3465(t-1)^2 + 3080(t-1)^3 \notag\\
     &\qquad - 1386(t-1)^4 + 252(t-1)^5\bigr] 
    \label{eq:filter}
\end{align}
for $t\in (1,2)$; and $H(t)=0$ for $t\geq 2$. 
Then $\fiH$ is in $C^5(\Rplus)$ and satisfies Definition~\ref{def:fiH}. Figure~\ref{fig:fiH_C5_wend} shows the plot of this filter. 
This particular filter has been used in previous works for the sphere, see \cite{SlWo2012,WaLeSlWo2017,Wang2016,WaSlWo2018}. 
We observe that the filter, which is constant $1$ over $[0,1]$, enables the filtered approximation and filtered hyinterpolation of degree $n$ (as given below) to reproduce polynomials with degree up to $n$ on $\torus{2}$. 
The finite support $[0,2]$ of $H$ makes the filtered hyperinterpolation a polynomial of degree up to $2n-1$. 
The middle polynomial over $[1,2]$ which is sufficiently smooth at the two ends modifies the Fourier coefficients from degree $n+1$ to $2n-1$ and makes the resulting filtered hyperinterpolation a near best approximator, as shown by Theorem~\ref{thm:nondfh_clean_det}.
\begin{figure*}
\centering
\begin{tabular}{cc}
{\tiny \sffamily Filter} & {\tiny \sffamily Wendland-Wu function}\\
\includegraphics[height=.3\textwidth]{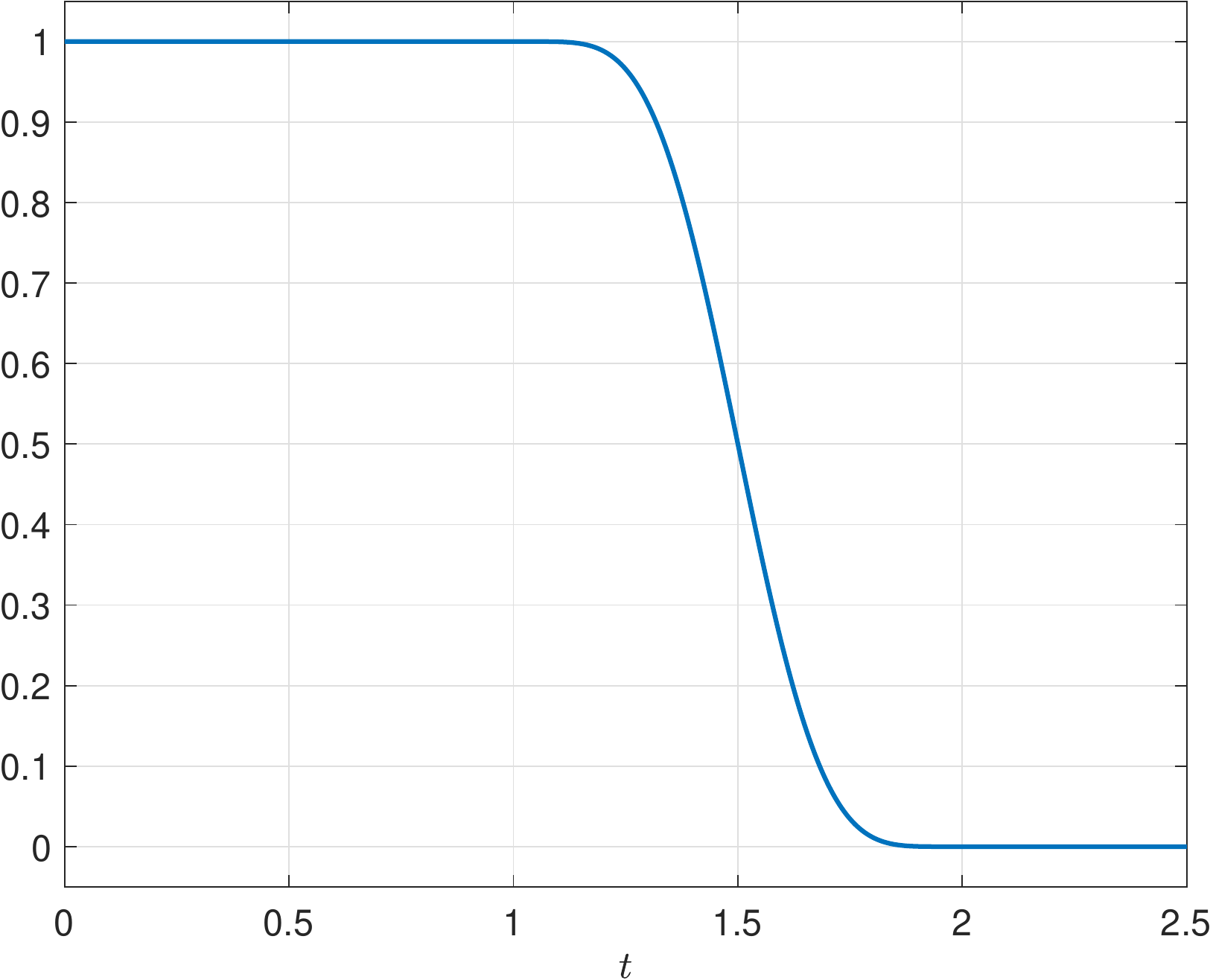}& \hspace{2mm}
\includegraphics[clip=true,trim={0 -8mm 0 8mm}, height=0.3\textwidth]{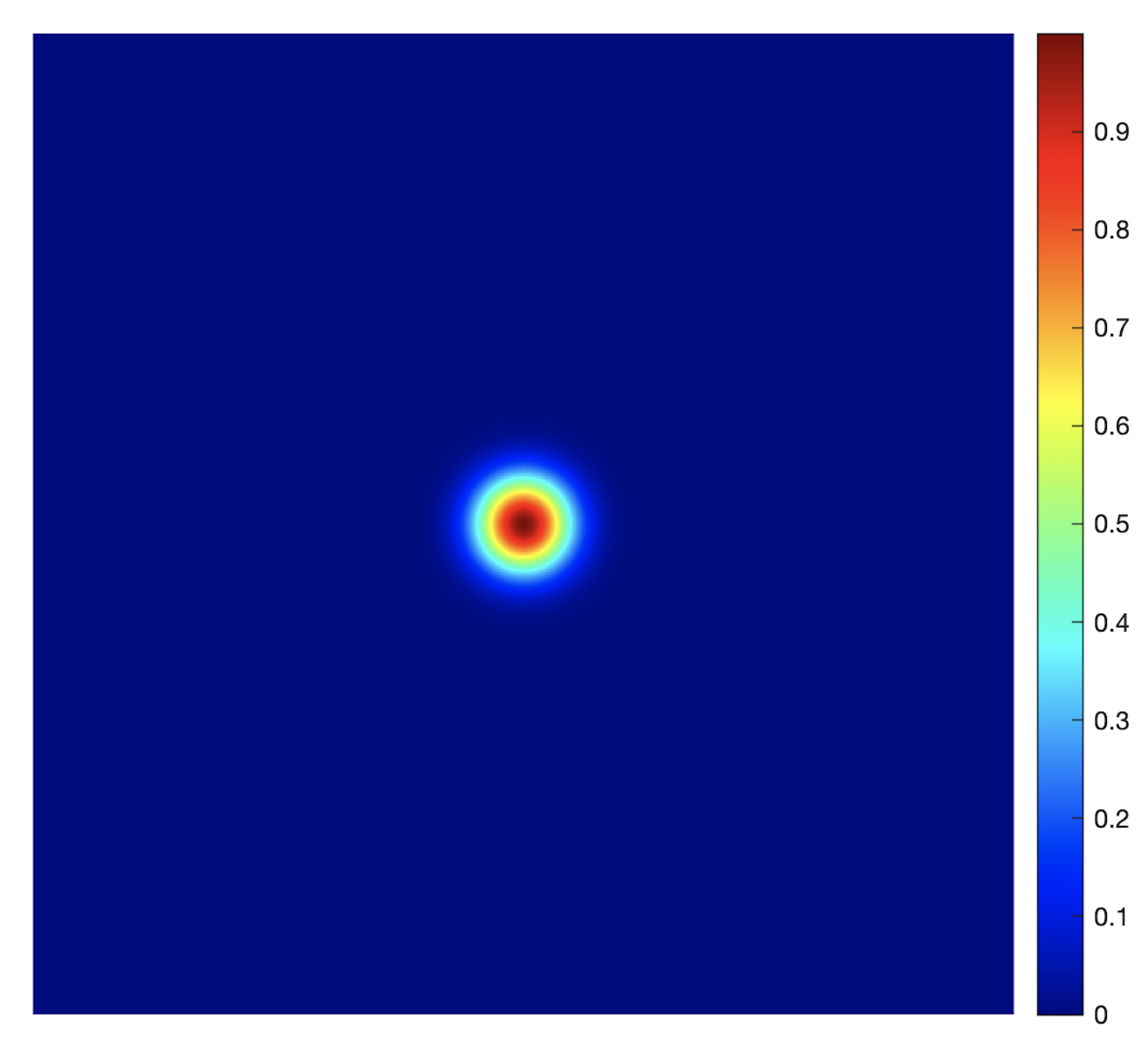}
\end{tabular}
\caption{Left: The filter $H$ in $C^5(\Rplus)$ given in \eqref{eq:filter}. Right: Wendland-Wu function on the torus, heatmap.}\label{fig:fiH_C5_wend}
\end{figure*}
With the filter \eqref{eq:filter}, the filtered kernel on $\torus{2}$ with filter $H$ is, for $n\in\N$ and $\bx,\by\in \torus{2}$,
\begin{equation*}
   \vdh{n,\fiH}(\bx,\by):= \frac{1}{(2\pi)^2}\sum_{\bk\in\Zd[2]}\fiH\left(\frac{|\bk|}{n}\right)e^{\imu\bk\cdot(\bx-\by)}.
\end{equation*}
As the support of filter $H$ is $[0,2]$, the summation over $\bk$ is constrained to $|\bk|\leq 2n-1$. 
The filtered approximation for $f\in L_2(\torus{2})$ is then 
\begin{equation*}
   V_{n,\fiH}(f;\bx):= \int_{\torus{2}} K_{n,\fiH}(\bx,\by)f(\by)\IntDd{y}.
\end{equation*}
As corresponds to Definition~\ref{defn:nondfh_det}, this is the ideal approximation of degree $n$, which is hard to compute as it would require integrating the unknown target function.

To construct a non-distributed filtered hyperinterpolation on $\torus{2}$, we 
consider $N=9n_0^2$ points $\bx_{j,l}=(2j\pi/(3n_0),2l\pi/(3n_0))$. 
For these we can use the quadrature rule $\QH=\{(w_{j,l},\bx_{j,l}): j,l=0,1,\dots,3n_0-1\}$ with 
$N=9n_0^2$ equal weights $w_{j,l}\equiv (2\pi)^2/N$. 
The quadrature rule $\QH$ is exact for polynomials of degree $n$. 
To satisfy the conditions of Theorems~\ref{thm:ndfh_noisy_det} and \ref{thm:dfh_noisy_det}, we can let hyperparameter $n_0=n$. 
In general, the quadrature weights need to be constructed depending on the location of the input points $\bx_{j,l}$. 
We have been able to show the existence of such weights for randomly sampled inputs (see Lemma~\ref{lem:random_quadr}); however, the explicit construction in such cases is yet to be explored. 

Consider a noisy data set $D=\{(\bx_{j,l},y_{j,l}): j,l=0,1,\dots,n_0-1\}$ with $y_{j,l} = f^*(\bx_{j,l}) + \epsilon_{j,l}$, $j,l=0,1,\dots,n_0 -1$, and $f^*\in C(\mfd)$. 
Here $f^*$ is the ideal (noiseless) target function.
The non-distributed filtered hyperinterpolation of degree $n$ with filter $H$ and quadrature rule $\QH$ for data $D$ is given by
\begin{equation}\label{eq:det_ndfhT2}
    V_{D,n}(\bx) = \frac{1}{N}\sum_{j,l=0,1,\dots,3n_0-1} y_{j,l} \sum_{|\bk|\leq 2n-1}H\left(\frac{|\bk|}{n}\right) e^{\imu (\bx-\bx_{j,l})\cdot \bk}, \quad \bx\in \torus{2}.
\end{equation}
This is our construction to obtain an approximation. It corresponds to Definition~\ref{defn:nondfh_det}. The summation index $\bk$ runs over a ball of radius $2n-1$ due to the compact support of the filter $H$.
Thus, $V_{D,n}(\bx)$ is fully discrete and computable. 
By Theorem~\ref{thm:ndfh_noisy_det}, the approximation error of $V_{D,n}$ for $f^*\in\mathbb{W}^r_2(\torus{2})$, $r>1$, has convergence rate at least of order $|D|^{-r/(r+1)}$ as $n_0$ (controlling the number of data points) and $n$ (controlling the degree of the approximation) increase.
In particular, if $f^*$ is a basis element in $\{\frac{1}{2\pi}e^{\imu \bx\cdot \bk}\}$,  then $r=\infty$, since a polynomial is infinitely smooth. 

In practice, we use the real part as the approximation and discard the complex part. Note that the amount of data, $9n^2$, determines the degree of the polynomials. In this example, the diffusion polynomials of degree $n$ are given by sums of eigenfunctions with momentum vector $\bk=(k_1,k_2)$ in the same grid defining the data.

\begin{figure}
\centering
    \scalebox{1}{ 
    \begin{tikzpicture}
    \node at (0,0) {\includegraphics[clip=true,trim=2cm 0.5cm 3.95cm 0.5cm,scale=.55]{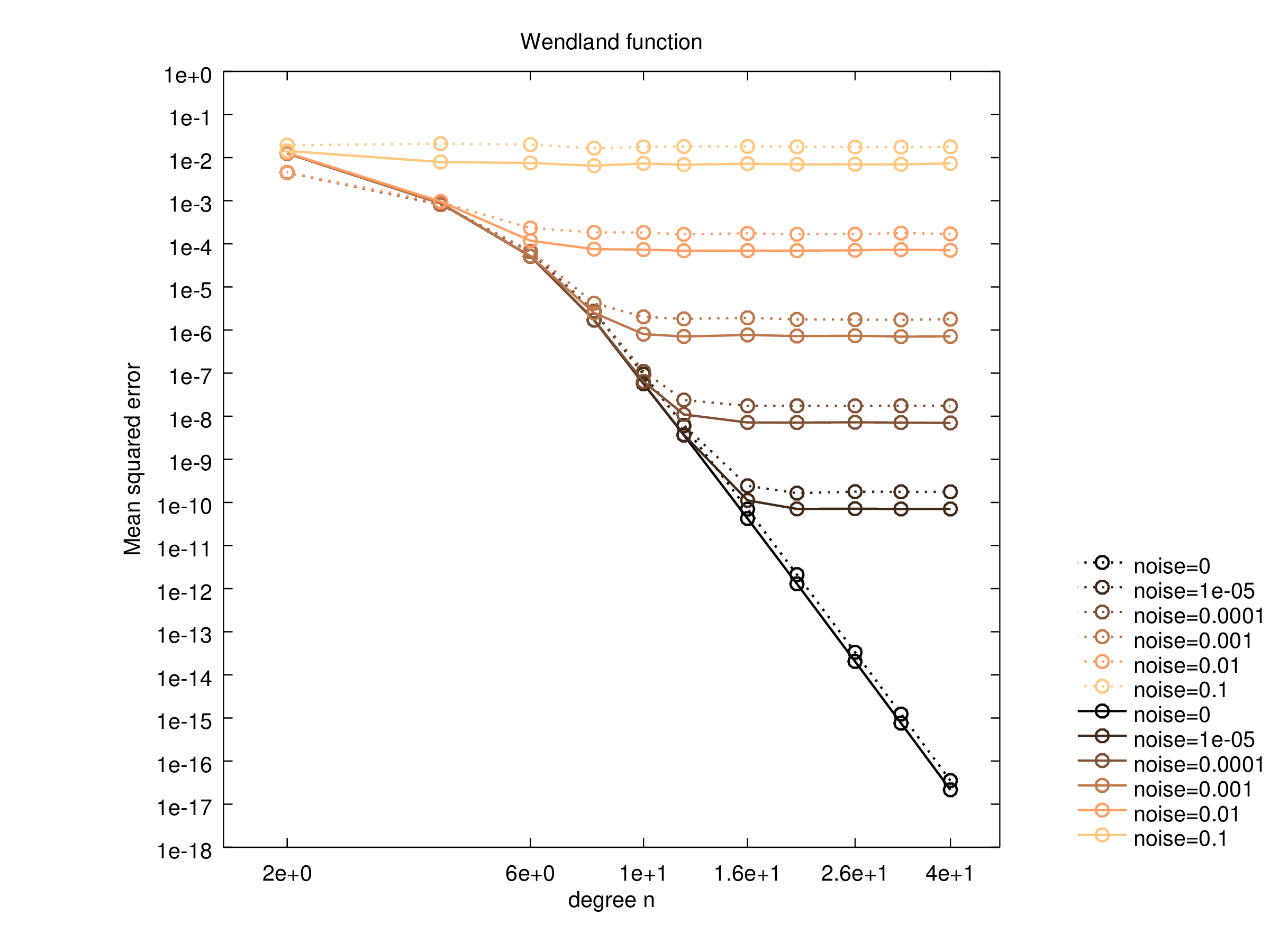}};
    \node (T1) at (-1.5,-1.5) {\tiny \sffamily train};
    \node (T2) [right of = T1, node distance = .75cm] {\tiny \sffamily general};
    \node (L1) [below of = T1, node distance = .8cm] {\includegraphics[clip=true,trim=17cm 4.06cm 2.2cm 8.84cm,scale=.5]{Err-target0}};
    \node (L2) [right of = L1, node distance =.75cm] {\includegraphics[clip=true,trim=17.1cm 1.65cm 2.25cm 11.2cm,scale=.5]{Err-target0}};
\node (L3) [right of = L2, node distance =1.125cm] {\includegraphics[clip=true,trim=18.1cm 1.6cm 0cm 11.25cm,scale=.5]{Err-target0}};

\node (I) at (6.5,4) {}; 
\node (T) [below of = I, node distance = .2cm] {\tiny \sffamily \begin{tabular}{cc}
data (noise=0.01) & trained function\end{tabular}}; 
\foreach \x [evaluate=\x as \jx using {int(9*\x*\x)}] in {2, 4, 6, 8, 10, 12, 16}
{ 
\node (I) [below of = I, node distance = 1cm] {\includegraphics[clip=true,trim=7cm 5.5cm 1cm 5.25cm, scale=.25]{target0-n\x-noise5}}; 
\node (J) [right of = I, node distance = 1.5cm] {\tiny \sffamily \x }; 
\node (K) [left of = I, node distance = 1.5cm] {\tiny \sffamily \jx }; 
}
\end{tikzpicture}
}
    \caption{$L_2$ squared errors of non-distributed learning by filtered hyperinterpolation for data from the Wendland-Wu function \eqref{eq:wendland-wu} on the torus $\torus{2}$, for different levels of noise in the training data, as the degree $n$ for the approximation (and the sample size $N=(3n)^2$) increases. 
    In each case, the function is learned using the noisy training data. Dotted lines show the error on the training data (training error), and solid lines show the population error relative to the ideal function (generalization error). The right part shows a few examples of the noisy training data and the corresponding learned functions, alongside with the number of data points and the degree. 
    The result is very stable, over repetitions.}
    \label{fig:non-distributed-Wendland}
\end{figure}

\begin{figure}
\centering
    \scalebox{1}{ 
    \begin{tikzpicture}
    \node at (0,0) {\includegraphics[clip=true,trim=1.5cm 0.5cm 3.95cm 0.5cm,scale=.54]{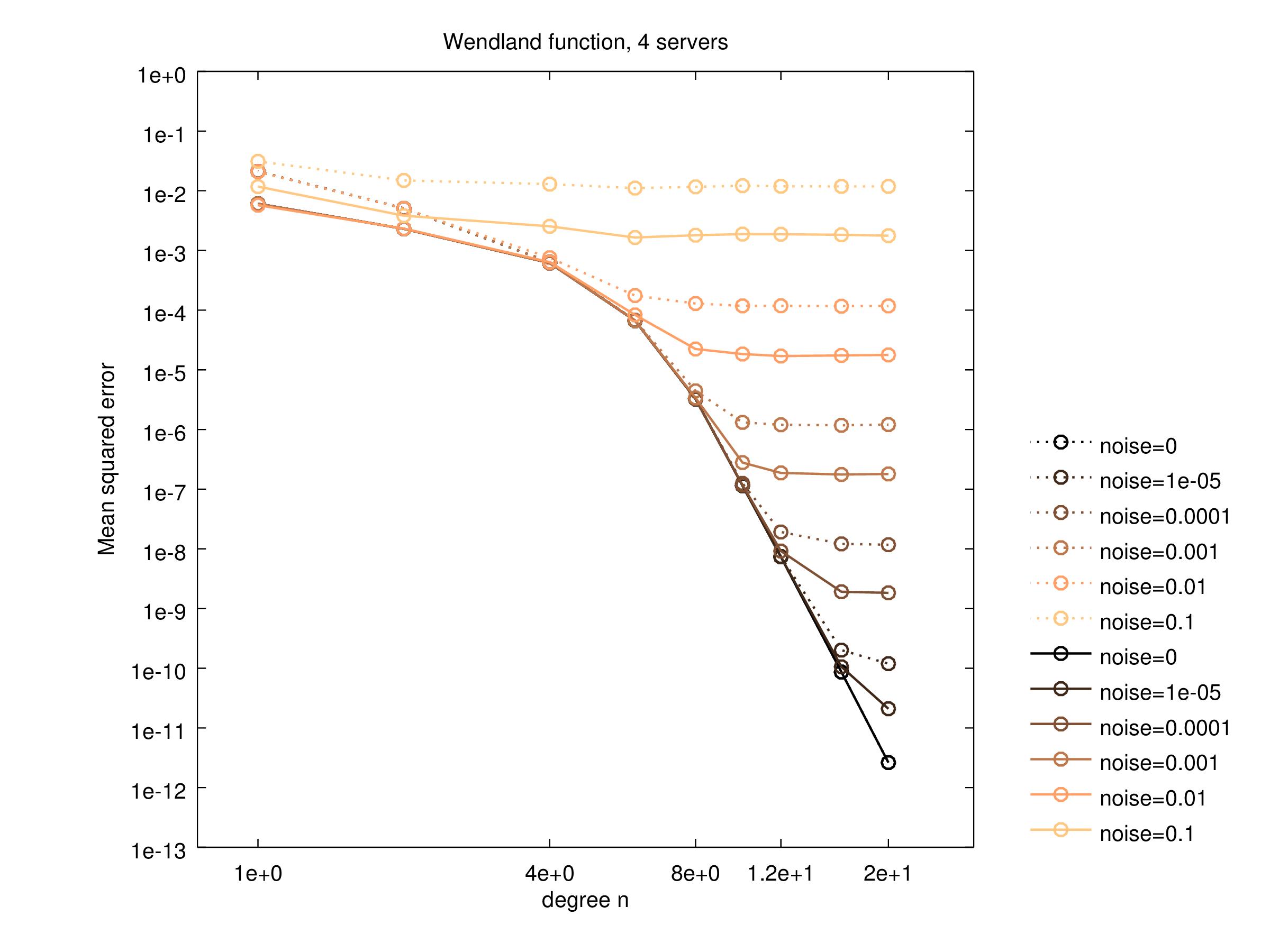}}; 
    \node (T1) at (-1.5,-1.5) {\tiny \sffamily train};
    \node (T2) [right of = T1, node distance = .75cm] {\tiny \sffamily general};
    \node (L1) [below of = T1, node distance = .8cm] {\includegraphics[clip=true,trim=17cm 4.06cm 2.2cm 8.84cm,scale=.5]{Err-target0}}; 
    \node (L2) [right of = L1, node distance =.75cm] {\includegraphics[clip=true,trim=17.1cm 1.65cm 2.25cm 11.2cm,scale=.5]{Err-target0}}; 
\node (L3) [right of = L2, node distance =1.125cm] {\includegraphics[clip=true,trim=18.1cm 1.6cm 0cm 11.25cm,scale=.5]{Err-target0}}; 

\node (I) at (6.4,4) {}; 
\node (T) [below of = I, node distance = .2cm] {\tiny \sffamily \begin{tabular}{cc}
data (noise=0.01) & trained function\end{tabular}}; 
\foreach \x [evaluate=\x as \jx using {int(9*\x*\x*4)}] in {1,2, 4, 6, 8, 10, 12}
{ 
\node (I) [below of = I, node distance = 1cm] {\includegraphics[clip=true,trim=7cm 5.5cm 1cm 5.25cm, scale=.25]{target0-n\x-noise5-servers4}
}; 
\node (J) [right of = I, node distance = 1.5cm] {\tiny \sffamily \x }; 
\node (K) [left of = I, node distance = 1.5cm] {\tiny \sffamily \jx }; 
}
\end{tikzpicture}
}
    \caption{$L_2$ squared error of distributed learning by filtered hyperinterpolation with $m=4$ servers for data from the Wendland-Wu function \eqref{eq:wendland-wu} on the torus $\torus{2}$, for various levels of noise, as the degree $n$ for the approximation (and the total sample size $N=(3n)^2\cdot m$) increases. 
    Dotted lines show the error on the training data, and solid lines show the population error relative to the ideal function. 
    The right part shows examples of noisy training data and the corresponding learned functions. The training data is split into 4 interleaved pieces for processing, and the final trained function is the average of the functions obtained in the local servers.
    }
    \label{fig:distributed-Wendland}
\end{figure}

The corresponding distributed filtered hyperinterpolation with $m$ servers is 
\begin{equation*}
    V_{D,n}^{(m)}(\bx) = \sum_{j=1}^{m} \frac{|D_j|}{|D|} V_{D_j,n}(\bx),\quad \bx\in \torus{2},
\end{equation*}
where $D_j$ is the data set on the $j$th local server, for $j=1,\dots,m$. 
This corresponds to Definition~\ref{defn:dfh}. 
By Theorem~\ref{thm:dfh_noisy_det}, the approximation error of the distributed strategy $V_{D,n}^{(m)}$ with $m$ servers for $f^*\in \mathbb{W}_2^r(\torus{2})$, $r>1$, is at least of order $|D|^{-r/(r+1)}$ provided the number of servers used satisfies $m\leq |D|^{\frac{r}{r+d/2}}$. 

When the points of data set $D$ are randomly distributed and satisfy the condition of Lemma~\ref{lem:random_quadr}, the non-distributed filtered hyperinterpolation remains the same as \eqref{eq:det_ndfhT2} with the points $\bx_{j,l}$ replaced by the set of random points $\Lambda_{D}$. But the local estimator $V_{D_j,n}$ in the distributed filtered hyperinterpolation uses the modified weights
\begin{equation*}
    w^*_{j,l}:=\left\{
    \begin{array}{ll} (2\pi)^2/N,& \displaystyle\mbox{if}\
        \sum_{0\leq k,l\leq n-1}|w_{j,l}|^2\leq 2/m,\\[4mm]
        0,&\mbox{otherwise}, 
        \end{array}\right. 
\end{equation*}
in the place of \eqref{eq:det_ndfhT2}.

For our illustration, we use the Wendland-Wu function on the torus as the target function:
\begin{equation}\label{eq:wendland-wu}
    f(\bx) = \phi(\bx-\bx_c),\quad \bx\in \torus{2}, 
\end{equation}
where $\phi(u)$ is the one-dimensional Wendland-Wu function
\begin{equation*}
  \phi(u) := 
  (1-u)_{+}^{8}(32u^3 + 25u^2 + 8u + 1),
\end{equation*}
and $\bx_c=(0,0)$ is the center, see \cite{Wendland1995piecewise,Wu1995compactly}. 
We show in the right part of Figure~\ref{fig:fiH_C5_wend} the Wendland-Wu function in \eqref{eq:wendland-wu} which is in $C^{6}(\torus{2})$. 
We generate noisy data set by adding Gaussian white noise at a particular noise level to the values from the Wendland-Wu function. 

Figure~\ref{fig:non-distributed-Wendland} shows the $L_2$ squared errors of both training and generalization for the approximation by non-distributed filtered hyperinterpolation on noisy data from the Wendland-Wu function \eqref{eq:wendland-wu} on $\torus{2}$, with six levels of noise from 0 to 0.1. 
The degree $n$ for the approximation is up to 40, and the sample size is $N=(3n)^2$. 
The right part shows a few examples of the noisy training data, all at a noise level of $0.01$, and the corresponding learned functions. 
For noiseless case, the training and generalization errors both converge to zero rapidly at a rate of approximately $\|V_{D,n}-f^*\|_{L_2(\mathcal{M})}\sim N^{-4}$. This is consistent with the theoretical upper bound $N^{-3}$ given in Theorem~\ref{thm:nondfh_clean_det} where $s\geq6$ and $d=2$. The slightly higher rate $N^{-4}$ is due to that the $\phi(\bx)$ may have a higher smoothness. 
For noisy data, the convergence of the error stops at a particular degree. 
The convergence rate is higher when the noise level is smaller. 
The mean squared error on the training data converges to a value close to the square of the noise level, which indicates that the trained function is filtering out the noise. 
For both noisy and noiseless cases, the generalization error is slightly lower than the training error. 
Also, the result has consistent stability over repetitions in all cases.

Figure~\ref{fig:distributed-Wendland} shows the $L_2$ squared errors for distributed filtered hyperinterpolation. We also generate the data from Wendland-Wu function. For this experiment, we partition the data set equally into $m=4$ servers. 
The $i$th server computes a filtered hyperinterpolation on the data $D_i$ which are defined on interleaved grids of the form ${\rm mod}(\bx_{j,l}+ \mathbf{s}_i,2\pi)$, where $\mathbf{s}_i$ is a shift number between $(0,2\pi)$ and $\mathbf{s}_i$ are distinct for different subsets $D_i$. 
The quadrature rule $\QH[D_i]$ utilizes equal weights as the non-distributed case.
The distributed filtered hyperinterpolation combines the results from all servers, which has similar approximation behaviour as the non-distributed case. If using noisy data in training, the approximation error has saturation after a particular degree; while with noiseless data, the error decays to zero all through the degree. We observe here that the generalization error has a more significant gap with training error as compared to the non-distributed case, which may be partly due to the distributed strategy (on multiple servers) are adopted. These experiments show consistent results as the theory in previous sections.

\section{Discussion}
\label{sec:comparison}
\paragraph{Rates of convergence}
In Table~\ref{tab:comparison}, we compare the theoretical convergence rates of the non-distributed and distributed filtered hyperinterpolation in noiseless and noisy cases, as obtained in the previous sections. It shows that the filtered hyperinterpolation for clean data can achieve an optimal convergence rate $N^{-r/d}$ in both non-distributed and distributed cases and both deterministic and random sampling cases. 
For noisy data, the non-distributed filtered hyperinterpolation has a slightly lower approximation rate at $N^{-r/(r+d/2)}$, $r>d/2$, which in the limiting case $r\to d/2$ becomes the optimal rate $N^{-r/d}$. 
The distributed strategy preserves the convergence rate $N^{-r/(r+d/2)}$ of the non-distributed filtered hyperinterpolation for noisy data, provided that the number of data $N$ increases sufficiently fast with the number of servers, but the condition of the number of servers in the deterministic sampling case is weaker than the random sampling case.

\begin{table}[t]
\begin{minipage}{\textwidth}
\caption{Behavior of the error upper bound in the four settings that we considered in the paper, 
depending on the number of data points $N=|D|$, the smoothness $r$ of the target function, the manifold dimension $d$, and the number of servers $m$. In all cases, the noise in the data lowers the rate order of the approximation error. In noisy cases, the constant contains two terms: one is a constant times the squared Sobolev norm of the target function; the other is a constant times the squared noise upper bound. 
}
\label{tab:comparison}
\begin{center}
\begin{tabular}{P{2.5cm}|P{2.5cm}p{2.5cm}|P{2.5cm}P{2.5cm}}
\toprule
 \multirow{2}{*}{Type} & \multicolumn{2}{c}{clean} & \multicolumn{2}{c}{noisy}\\ \cline{2-3} \cline{4-5}
 & deterministic & random & deterministic & random\\ 
\midrule
  Non-distributed 
  & $N^{-r/d}$ \newline Th.~\ref{thm:nondfh_clean_det} 
  & $N^{-r/d}$ \newline Th.~\ref{thm:ndfh_ran_clean}
  & $N^{-r/(r+d/2)}$ \newline Th.~\ref{thm:ndfh_noisy_det} & $N^{-r/(r+d/2)}$ \newline Th.~\ref{thm:ndfh_noisy_ran} \\
\midrule
  Distributed ($m$ servers) 
    & $N^{-r/d}$ \newline ($m\leq N$) \newline Th.~\ref{thm:dfh_clean_det}  
    & $N^{-r/d}$ \newline ($m\leq \frac{N}{n^{d+\tau}}$) \newline Th.~\ref{thm:dfh_ran_clean}
    & $N^{-r/(r+d/2)}$ \newline ($m\leq N^{\frac{r}{r+d/2}}$) \newline Th.~\ref{thm:dfh_noisy_det}
  & $N^{-r/(r+d/2)}$ \newline 
    ($m\leq N^{\frac{r-\tau/2}{r+d/2}}$)
    \newline Th.~\ref{thm:dfh_ran_noisy}
    \\
\bottomrule
\end{tabular}
\end{center}
\end{minipage}
\end{table}

\paragraph{Implementation and complexity}
We already illustrated the computation of the method in Section~\ref{sec:example}. A summary of the implementation is shown in Algorithm~\ref{algorithm}\footnote{The condition $m\leq \sqrt{N}$ in Algorithm~\ref{algorithm} is a consequence of Remarks~\ref{rem:thm:dfh_noisy_det} and \ref{rem:thm:dfh_ran_noisy}.}. 
In deterministic sampling, we start with some given input data and a suitable quadrature rule for those input values. In the random sampling case, we begin with the data, which in theory is only assumed sampled from some distribution, and then construct a suitable quadrature rule. 
There are various details to consider for the implementation. 
First, we need to choose a filter $H$, which should be sufficiently smooth depending on the dimension of the manifold $\mathcal{M}$. The support of the filter will constrain the degree of the polynomials in the approximation. 
Second, we need a quadrature rule. Once a quadrature rule has been determined for the input data on the manifold, it can be applied to any output data. 
For important families of manifolds and configurations of points, quadrature rules are available from the literature. For instance, on the torus, cubes \citep{trefethen2013approximation,driscoll2014chebfun}; sphere: Gaussian, spherical design \citep{HeSlWo2010,BoRaVi2013,DeGoSe1977,Womersley2018}; graph: its nodes. 
The practical computation of quadrature rules for general types of data (or random input data) is an interesting problem in its own right, which has yet to be developed in more detail. 
Once a quadrature rule is available, the time complexity for Algorithm~\ref{algorithm} is $\bigo{}{\max_{j=1,\dots,m} |D|^{\frac{d}{2r+d}} |D_j|}$. 
If $|D_j|$ are all equal, the time complexity becomes $\bigo{}{|D|^{\frac{d}{2r+d}+1}/m}$.

\begin{algorithm}[t]
    \KwIn{For a given $N$, a number $m \leq \sqrt{N}$ of servers; 
    a filter $H\colon \Rplus\to\mathbb{R}$; 
    a choice of the polynomial degree $n$ for all servers;
    If deterministic samples: for each $j=1,\ldots,m$, a quadrature rule $Q_j=\{(w_i^{(j)},\bx_i^{(j)})\}_{i=1}^{N/m}$ on a $d$-manifold $\mathcal{M}$ satisfying Assumption~\eqref{assum:qrmfd}; 
    data sets $D_j =\{(\bx_i^{(j)},y_i^{(j)})\}_{i=1}^{N/m}$, $j=1,\ldots,m$, of noisy samples $y_i^{(j)}$ of the outputs of a function $f^*$ at the input points $\bx_i^{(j)}$.
    }
  \KwOut{Approximation of function $f^*$ on $\mathcal{M}$ by distributed filtered hyperinterpolation of degree $n$} 
    \Begin{
    Identify the eigenvalues $\{\eigvm\}_{\ell\in\N}$ and eigenfunctions $\{\eigfm\in\Lpm{2}\: |\: \LBm\eigfm = -\eigvm^{2}\: \eigfm,\; \ell\in\N\}$ of the Laplace-Beltrami $\LBm$ on $\mathcal{M}$.  \\
    If random samples: Identify a suitable quadrature rule $Q_j=\{(w_i,\bx_i)\}_{i=1}^{|D_j|}$ for the data set $D_j$ on $\mathcal{M}$, satisfying \eqref{eq:wi_dfh_ran}.
    }
    \For{each server $j=1,\ldots,m$ in parallel}
    {Consider the filtered kernel functions $K_{n,H}(\cdot,\bx_i) = \sum_{\ell} H(\frac{\lambda_\ell}{n})\phi_\ell(\cdot)\overline{\phi_\ell(\bx_i)}$, $\bx_i\in D_j$, with filter $H$ and degree specification $n$ as shown in Def.~\ref{def:fil.fil.ker}. \\
    Compute the approximation $V_{D_j,n}(\cdot) = \sum_{i=1}^{|D_j|}w_i y_i K_{n,H}(\cdot,\bx_i)$ using the output data $y_i\in D_j$ and the quadrature rule $w_i\in Q_j$ as shown in Def.~\ref{defn:nondfh_det}.\\
    }
    \KwRet{The average $V_{D,n}^{(m)}(\cdot)=\sum_j\frac{|D_j|}{|D|}V_{D_j,n}(\cdot)$ as shown in Def.~\ref{defn:dfh}}.        
    \caption{Distributed filtered hyperinterpolation}
    \label{algorithm} 
\end{algorithm}

\paragraph{Final remarks}
We have provided the first complete theoretical foundation for distributed learning on manifolds by filtered hyperinterpolation. 
One appealing aspect of filtered hyperinterpolation is that it comes with strong theoretical guarantees on the error, which apply to the population error or generalization error. 
Obtaining accurate bounds of this kind with neural networks is an active topic of research (which needs to incorporate not only the theoretical capacity of the neural network but also implicit regularization effects from the parameter initialization and optimization procedures). 
In filtered hyperinterpolation, once the data and the corresponding approximation degree are given, the approximating function is computed in closed form, meaning that we do not require parameter optimization. 
Also, filtered hyperinterpolation is a method that allows us to tune the model complexity directly in terms of the amount of available data in a principled way. 
As we observe in numerical experiments, the population error often is better than the training error. 
An interpretation is that this method imposes priors in terms of the polynomial degree and thus it is able to filter out noise. 
The method incorporates the geometry of the input space through the basis functions which are utilized to construct the approximations. Here, the basis functions are eigenfunctions of the Laplace-Beltrami operator on the manifold. 
It also contributes to the interpretability of the approximations, which live in polynomial spaces for which we have a good intuition. 
On the downside, to obtain the approximating function, the method relies on numerical integration techniques, in particular, quadrature rules, which is non-trivial in general to obtain. 
For general Riemannian manifolds, we can use the eigenvalues and eigenvectors of the discrete version of the Laplacian to approximate the Laplace-Beltrami operator, where the sampling points can estimate the discrete Laplacian, see, e.g.\ \cite{Sunada2008,crane2013digital,dunson2019diffusion}.

\medskip
\paragraph{Acknowledgements}
Guido Mont\'{u}far and Yu Guang Wang acknowledge the support of funding from the European Research Council (ERC) under the European Union's Horizon 2020 research and innovation programme (grant agreement n\textsuperscript{o} 757983).
Yu Guang Wang also acknowledges support from the Australian Research Council under Discovery Project DP180100506.  
This material is based upon work supported by the National Science Foundation under Grant No.~DMS-1439786 while the authors were in residence at the Institute for Computational and Experimental Research in Mathematics in Providence, RI, during the Collaborate@ICERM on ``Geometry of Data and Networks''. 
Part of this research was performed while the authors were at the Institute for Pure and Applied Mathematics (IPAM), which is supported by the National Science Foundation (Grant No.~DMS-1440415).

\bibliographystyle{abbrvbib}
\bibliography{dlfh}   

\begin{thebibliography}{47}
\providecommand{\natexlab}[1]{#1}
\providecommand{\url}[1]{\texttt{#1}}
\expandafter\ifx\csname urlstyle\endcsname\relax
  \providecommand{\doi}[1]{doi: #1}\else
  \providecommand{\doi}{doi: \begingroup \urlstyle{rm}\Url}\fi

\bibitem[Aubin(1998)]{Aubin1998}
T.~Aubin.
\newblock \emph{Some nonlinear problems in {R}iemannian geometry}.
\newblock Springer Monogr. Math. Springer-Verlag, Berlin, 1998.
\newblock ISBN 3-540-60752-8.
\newblock \doi{10.1007/978-3-662-13006-3}.
\newblock URL \url{http://dx.doi.org/10.1007/978-3-662-13006-3}.

\bibitem[Bondarenko et~al.(2013)Bondarenko, Radchenko, and
  Viazovska]{BoRaVi2013}
A.~Bondarenko, D.~Radchenko, and M.~Viazovska.
\newblock Optimal asymptotic bounds for spherical designs.
\newblock \emph{Ann. of Math. (2)}, 178\penalty0 (2):\penalty0 443--452, 2013.
\newblock ISSN 0003-486X.
\newblock \doi{10.4007/annals.2013.178.2.2}.
\newblock URL \url{https://doi.org/10.4007/annals.2013.178.2.2}.

\bibitem[Breger et~al.(2017{\natexlab{a}})Breger, Ehler, and
  Gr{\"a}f]{breger2017quasi}
A.~Breger, M.~Ehler, and M.~Gr{\"a}f.
\newblock Quasi monte carlo integration and kernel-based function approximation
  on grassmannians.
\newblock In \emph{Frames and Other Bases in Abstract and Function Spaces},
  pages 333--353. Springer, 2017{\natexlab{a}}.

\bibitem[Breger et~al.(2017{\natexlab{b}})Breger, Ehler, Gr{\"a}f, and
  Peter]{Breger2017}
A.~Breger, M.~Ehler, M.~Gr{\"a}f, and T.~Peter.
\newblock \emph{Cubatures on Grassmannians: Moments, Dimension Reduction, and
  Related Topics}, pages 235--259.
\newblock Springer International Publishing, Cham, 2017{\natexlab{b}}.
\newblock ISBN 978-3-319-69802-1.
\newblock \doi{10.1007/978-3-319-69802-1_8}.
\newblock URL \url{https://doi.org/10.1007/978-3-319-69802-1_8}.

\bibitem[Cools(2003)]{Cools2003}
R.~Cools.
\newblock An encyclopaedia of cubature formulas.
\newblock \emph{J. Complexity}, 19\penalty0 (3):\penalty0 445--453, 2003.
\newblock ISSN 0885-064X.
\newblock \doi{10.1016/S0885-064X(03)00011-6}.
\newblock URL \url{https://doi.org/10.1016/S0885-064X(03)00011-6}.
\newblock Numerical integration and its complexity (Oberwolfach, 2001).

\bibitem[Crane et~al.(2013)Crane, De~Goes, Desbrun, and
  Schr{\"o}der]{crane2013digital}
K.~Crane, F.~De~Goes, M.~Desbrun, and P.~Schr{\"o}der.
\newblock Digital geometry processing with discrete exterior calculus.
\newblock In \emph{ACM SIGGRAPH 2013 Courses}, pages 1--126. 2013.

\bibitem[Cucker and Smale(2002)]{CuSm2002}
F.~Cucker and S.~Smale.
\newblock On the mathematical foundations of learning.
\newblock \emph{Bull. Amer. Math. Soc. (N.S.)}, 39\penalty0 (1):\penalty0
  1--49, 2002.
\newblock ISSN 0273-0979.
\newblock \doi{10.1090/S0273-0979-01-00923-5}.
\newblock URL \url{https://doi.org/10.1090/S0273-0979-01-00923-5}.

\bibitem[Dai(2006)]{Dai2006}
F.~Dai.
\newblock On generalized hyperinterpolation on the sphere.
\newblock \emph{Proc. Amer. Math. Soc.}, 134\penalty0 (10):\penalty0
  2931--2941, 2006.
\newblock ISSN 0002-9939.
\newblock \doi{10.1090/S0002-9939-06-08421-8}.
\newblock URL
  \url{http://dx.doi.org.ezproxy.cityu.edu.hk/10.1090/S0002-9939-06-08421-8}.

\bibitem[Dai and Xu(2013)]{dai2013approximation}
F.~Dai and Y.~Xu.
\newblock \emph{Approximation theory and harmonic analysis on spheres and
  balls}, volume~23.
\newblock Springer, 2013.

\bibitem[de~La~Vall\'{e}e~Poussin(1919)]{vallee1919lecons}
C.~de~La~Vall\'{e}e~Poussin.
\newblock \emph{Le\c{c}ons sur l'approximation des Fonctions d'une Variable
  Réelle}.
\newblock Gauthiers-Villars, Paris, 1919.
\newblock 2nd edn. Chelsea Publ. Co., New York 1970.

\bibitem[Delsarte et~al.(1977)Delsarte, Goethals, and Seidel]{DeGoSe1977}
P.~Delsarte, J.~M. Goethals, and J.~J. Seidel.
\newblock Spherical codes and designs.
\newblock \emph{Geometriae Dedicata}, 6\penalty0 (3):\penalty0 363--388, 1977.
\newblock ISSN 0046-5755.
\newblock \doi{10.1007/bf03187604}.
\newblock URL \url{https://doi.org/10.1007/bf03187604}.

\bibitem[Driscoll et~al.(2014)Driscoll, Hale, and
  Trefethen]{driscoll2014chebfun}
T.~A. Driscoll, N.~Hale, and L.~N. Trefethen.
\newblock \emph{Chebfun Guide}.
\newblock Pafnuty Publications, Oxford, 2014.

\bibitem[Dunson et~al.(2019)Dunson, Wu, and Wu]{dunson2019diffusion}
D.~B. Dunson, H.-T. Wu, and N.~Wu.
\newblock Diffusion based gaussian process regression via heat kernel
  reconstruction.
\newblock \emph{arXiv preprint arXiv:1912.05680}, 2019.

\bibitem[Filbir and Mhaskar(2011)]{FiMh2011}
F.~Filbir and H.~N. Mhaskar.
\newblock Marcinkiewicz-{Z}ygmund measures on manifolds.
\newblock \emph{J. Complexity}, 27\penalty0 (6):\penalty0 568--596, 2011.
\newblock ISSN 0885-064X.
\newblock \doi{10.1016/j.jco.2011.03.002}.
\newblock URL
  \url{http://dx.doi.org.ezproxy.cityu.edu.hk/10.1016/j.jco.2011.03.002}.

\bibitem[Guo et~al.(2017)Guo, Lin, and Zhou]{GuLiZh2017}
Z.-C. Guo, S.-B. Lin, and D.-X. Zhou.
\newblock Learning theory of distributed spectral algorithms.
\newblock \emph{Inverse Probl.}, 33\penalty0 (7):\penalty0 074009, 2017.
\newblock URL \url{http://stacks.iop.org/0266-5611/33/i=7/a=074009}.

\bibitem[Gy\"orfi et~al.(2002)Gy\"orfi, Kohler, Krzy\.zak, and
  Walk]{GyKoKrWa2002}
L.~Gy\"orfi, M.~Kohler, A.~Krzy\.zak, and H.~Walk.
\newblock \emph{A distribution-free theory of nonparametric regression}.
\newblock Springer Series in Statistics. Springer-Verlag, New York, 2002.
\newblock ISBN 0-387-95441-4.
\newblock \doi{10.1007/b97848}.

\bibitem[Hesse and Sloan(2006)]{HeSl2006}
K.~Hesse and I.~H. Sloan.
\newblock Cubature over the sphere {$S^2$} in {S}obolev spaces of arbitrary
  order.
\newblock \emph{J. Approx. Theory}, 141\penalty0 (2):\penalty0 118--133, 2006.
\newblock ISSN 0021-9045.
\newblock \doi{10.1016/j.jat.2006.01.004}.
\newblock URL \url{https://doi.org/10.1016/j.jat.2006.01.004}.

\bibitem[Hesse et~al.(2010)Hesse, Sloan, and Womersley]{HeSlWo2010}
K.~Hesse, I.~H. Sloan, and R.~S. Womersley.
\newblock Numerical integration on the sphere.
\newblock \emph{Handbook of Geomathematics}, pages 1185--1219, 2010.

\bibitem[Le~Gia and Mhaskar(2008)]{LeMh2008}
Q.~T. Le~Gia and H.~N. Mhaskar.
\newblock Localized linear polynomial operators and quadrature formulas on the
  sphere.
\newblock \emph{SIAM J. Numer. Anal.}, 47\penalty0 (1):\penalty0 440--466,
  2008.
\newblock ISSN 0036-1429.
\newblock \doi{10.1137/060678555}.

\bibitem[Lin and Zhou(2018)]{LiZh2018}
S.-B. Lin and D.-X. Zhou.
\newblock Distributed kernel-based gradient descent algorithms.
\newblock \emph{Constr. Approx.}, 47\penalty0 (2):\penalty0 249--276, Apr 2018.
\newblock ISSN 1432-0940.
\newblock \doi{10.1007/s00365-017-9379-1}.
\newblock URL \url{https://doi.org/10.1007/s00365-017-9379-1}.

\bibitem[Lin et~al.(2017)Lin, Guo, and Zhou]{LiGuZh2017}
S.-B. Lin, X.~Guo, and D.-X. Zhou.
\newblock Distributed learning with regularized least squares.
\newblock \emph{J. Mach. Learn. Res.}, 18:\penalty0 Paper No. 92, 31, 2017.
\newblock ISSN 1532-4435.

\bibitem[Maggioni and Mhaskar(2008)]{MaMh2008}
M.~Maggioni and H.~N. Mhaskar.
\newblock Diffusion polynomial frames on metric measure spaces.
\newblock \emph{Appl. Comput. Harmon. Anal.}, 24\penalty0 (3):\penalty0
  329--353, 2008.
\newblock ISSN 1063-5203.
\newblock \doi{10.1016/j.acha.2007.07.001}.
\newblock URL \url{http://dx.doi.org/10.1016/j.acha.2007.07.001}.

\bibitem[Mhaskar et~al.(1999)Mhaskar, Narcowich, and Ward]{MhNaWa1999}
H.~N. Mhaskar, F.~J. Narcowich, and J.~D. Ward.
\newblock Approximation properties of zonal function networks using scattered
  data on the sphere.
\newblock \emph{Adv. Comput. Math.}, 11\penalty0 (2-3):\penalty0 121--137,
  1999.
\newblock ISSN 1019-7168.
\newblock \doi{10.1023/A:1018967708053}.

\bibitem[Mhaskar et~al.(2001)Mhaskar, Narcowich, and Ward]{MhNaWa2001}
H.~N. Mhaskar, F.~J. Narcowich, and J.~D. Ward.
\newblock Spherical {M}arcinkiewicz-{Z}ygmund inequalities and positive
  quadrature.
\newblock \emph{Math. Comp.}, 70\penalty0 (235):\penalty0 1113--1130, 2001.
\newblock ISSN 0025-5718.
\newblock \doi{10.1090/S0025-5718-00-01240-0}.

\bibitem[Narcowich et~al.(2006)Narcowich, Petrushev, and Ward]{NaPeWa2006}
F.~J. Narcowich, P.~Petrushev, and J.~D. Ward.
\newblock Localized tight frames on spheres.
\newblock \emph{SIAM J. Math. Anal.}, 38\penalty0 (2):\penalty0 574--594, 2006.
\newblock ISSN 0036-1410.
\newblock \doi{10.1137/040614359}.

\bibitem[Rustamov(1993)]{rustamov1994on}
K.~P. Rustamov.
\newblock On the approximation of functions on a sphere.
\newblock \emph{Izv. Ross. Akad. Nauk Ser. Mat.}, 57\penalty0 (5):\penalty0
  127--148, 1993.
\newblock ISSN 1607-0046.
\newblock \doi{10.1070/IM1994v043n02ABEH001566}.
\newblock URL \url{https://doi.org/10.1070/IM1994v043n02ABEH001566}.

\bibitem[Sloan(1995)]{Sloan1995}
I.~H. Sloan.
\newblock Polynomial interpolation and hyperinterpolation over general regions.
\newblock \emph{J. Approx. Theory}, 83\penalty0 (2):\penalty0 238--254, 1995.
\newblock ISSN 0021-9045.
\newblock \doi{10.1006/jath.1995.1119}.

\bibitem[Sloan(2011)]{sloan2011polynomial}
I.~H. Sloan.
\newblock {Polynomial approximation on spheres-generalizing de La
  Vall{\'e}e-Poussin}.
\newblock \emph{Comput. Methods Appl. Math.}, 11\penalty0 (4):\penalty0
  540--552, 2011.

\bibitem[Sloan and Womersley(2012)]{SlWo2012}
I.~H. Sloan and R.~S. Womersley.
\newblock Filtered hyperinterpolation: a constructive polynomial approximation
  on the sphere.
\newblock \emph{GEM Int. J. Geomath.}, 3\penalty0 (1):\penalty0 95--117, 2012.
\newblock ISSN 1869-2672.
\newblock \doi{10.1007/s13137-011-0029-7}.

\bibitem[Stein(1957)]{stein1957interpolation}
E.~M. Stein.
\newblock {Interpolation in polynomial classes and Markoff's inequality}.
\newblock \emph{Duke Math. J.}, 24\penalty0 (3):\penalty0 467--476, 09 1957.
\newblock \doi{10.1215/S0012-7094-57-02453-5}.
\newblock URL \url{https://doi.org/10.1215/S0012-7094-57-02453-5}.

\bibitem[Sunada(2008)]{Sunada2008}
T.~Sunada.
\newblock Discrete geometric analysis.
\newblock \emph{Proceedings of Symposia in Pure Mathematics}, 77:\penalty0
  51--86, 01 2008.
\newblock \doi{10.1090/pspum/077/2459864}.

\bibitem[Trefethen(2013)]{trefethen2013approximation}
L.~N. Trefethen.
\newblock \emph{Approximation theory and approximation practice}, volume 128.
\newblock SIAM, 2013.

\bibitem[Wang and Sloan(2017)]{WaSl2017}
H.~Wang and I.~H. Sloan.
\newblock On filtered polynomial approximation on the sphere.
\newblock \emph{J. Fourier Anal. Appl.}, 23\penalty0 (4):\penalty0 863--876,
  2017.
\newblock ISSN 1069-5869.
\newblock \doi{10.1007/s00041-016-9493-7}.
\newblock URL \url{https://doi.org/10.1007/s00041-016-9493-7}.

\bibitem[Wang and Wang(2016)]{WaWa2016}
H.~Wang and K.~Wang.
\newblock Optimal recovery of {B}esov classes of generalized smoothness and
  {S}obolev classes on the sphere.
\newblock \emph{J. Complexity}, 32\penalty0 (1):\penalty0 40--52, 2016.
\newblock ISSN 0885-064X.
\newblock \doi{10.1016/j.jco.2015.07.003}.
\newblock URL \url{https://doi.org/10.1016/j.jco.2015.07.003}.

\bibitem[Wang(2016)]{Wang2016}
Y.~Wang.
\newblock Filtered polynomial approximation on the sphere.
\newblock \emph{Bull. Aust. Math. Soc.}, 93\penalty0 (1):\penalty0 162--163,
  2016.

\bibitem[Wang and Zhu(2018)]{wang2018analysis}
Y.~G. Wang and H.~Zhu.
\newblock Analysis of framelet transforms on a simplex.
\newblock In \emph{Contemporary Computational Mathematics -- A Celebration of
  the 80th Birthday of Ian Sloan}, pages 1175--1189. Springer, 2018.

\bibitem[Wang and Zhuang(2019)]{wang2019tight}
Y.~G. Wang and X.~Zhuang.
\newblock Tight framelets on graphs for multiscale data analysis.
\newblock In \emph{Wavelets and Sparsity XVIII}, volume 11138, page 111380B.
  International Society for Optics and Photonics, 2019.

\bibitem[Wang and Zhuang(2020)]{wang2020tight}
Y.~G. Wang and X.~Zhuang.
\newblock Tight framelets and fast framelet filter bank transforms on
  manifolds.
\newblock \emph{Appl. Comput. Harmon. Anal.}, 48\penalty0 (1):\penalty0 64--95,
  2020.

\bibitem[Wang et~al.(2017)Wang, Le~Gia, Sloan, and Womersley]{WaLeSlWo2017}
Y.~G. Wang, Q.~T. Le~Gia, I.~H. Sloan, and R.~S. Womersley.
\newblock Fully discrete needlet approximation on the sphere.
\newblock \emph{Appl. Comput. Harmon. Anal.}, 43\penalty0 (2):\penalty0
  292--316, 2017.
\newblock ISSN 1063-5203.
\newblock \doi{10.1016/j.acha.2016.01.003}.

\bibitem[Wang et~al.(2018)Wang, Sloan, and Womersley]{WaSlWo2018}
Y.~G. Wang, I.~H. Sloan, and R.~S. Womersley.
\newblock Riemann localisation on the sphere.
\newblock \emph{J. Fourier Anal. Appl.}, 24\penalty0 (1):\penalty0 141--183,
  2018.

\bibitem[Wendland(1995)]{Wendland1995piecewise}
H.~Wendland.
\newblock Piecewise polynomial, positive definite and compactly supported
  radial functions of minimal degree.
\newblock \emph{Adv. Comput. Math.}, 4\penalty0 (4):\penalty0 389--396, 1995.
\newblock ISSN 1019-7168.
\newblock \doi{10.1007/BF02123482}.
\newblock URL \url{https://doi.org/10.1007/BF02123482}.

\bibitem[Womersley(2018)]{Womersley2018}
R.~S. Womersley.
\newblock Efficient spherical designs with good geometric properties.
\newblock In \emph{Contemporary Computational Mathematics -- A Celebration of
  the 80th Birthday of {I}an {S}loan. {V}ol. 1, 2}, pages 1243--1285. Springer,
  Cham, 2018.

\bibitem[Wu and Zhou(2005)]{WuZh2005}
Q.~Wu and D.-X. Zhou.
\newblock S{VM} soft margin classifiers: linear programming versus quadratic
  programming.
\newblock \emph{Neural Comput.}, 17\penalty0 (5):\penalty0 1160--1187, 2005.
\newblock ISSN 0899-7667.
\newblock \doi{10.1162/0899766053491896}.

\bibitem[Wu(1995)]{Wu1995compactly}
Z.~M. Wu.
\newblock Compactly supported positive definite radial functions.
\newblock \emph{Adv. Comput. Math.}, 4\penalty0 (3):\penalty0 283--292, 1995.
\newblock ISSN 1019-7168.
\newblock \doi{10.1007/BF03177517}.
\newblock URL \url{https://doi.org/10.1007/BF03177517}.

\bibitem[Xu(2010)]{xu2010fourier}
Y.~Xu.
\newblock Fourier series and approximation on hexagonal and triangular domains.
\newblock \emph{Constr. Approx.}, 31\penalty0 (1):\penalty0 115, 2010.

\bibitem[Zhou(2002)]{Zhou2002}
D.-X. Zhou.
\newblock The covering number in learning theory.
\newblock \emph{J. Complexity}, 18\penalty0 (3):\penalty0 739--767, 2002.
\newblock ISSN 0885-064X.
\newblock \doi{10.1006/jcom.2002.0635}.
\newblock URL \url{https://doi.org/10.1006/jcom.2002.0635}.

\bibitem[Zhou and Jetter(2006)]{ZhJe2006}
D.-X. Zhou and K.~Jetter.
\newblock Approximation with polynomial kernels and {SVM} classifiers.
\newblock \emph{Adv. Comput. Math.}, 25\penalty0 (1-3):\penalty0 323--344,
  2006.
\newblock ISSN 1019-7168.
\newblock \doi{10.1007/s10444-004-7206-2}.

\end{thebibliography}

\appendix
\newpage
\section{Proofs}\label{sec:proofs}
The appendices contain the proofs of the theorems in Sections \ref{sec:ndfh_clean}, \ref{sec:ndfh_noisy}, \ref{sec:dfh_clean} and \ref{sec:dfh_noisy} in turn.

\subsection{Proofs for Section~\ref{sec:ndfh_clean}}\label{appendix:ndfh_clean}
\begin{proof}[Lemma~\ref{lm:fihyper.reproduce.p}] Let $P\in\polyspm$ and $\bx\in\mfd$. By $\supp \fiH\subset[0,2]$ and Assumption~\ref{assump:poly}, $\vdh{n,\fiH}(\bx,\cdot)P(\cdot)$, for each $i=1,\dots, N$, is a polynomial of degree $3n-1$. 
Since $\fiH(t)=1$ for $t\in[0,1]$, and since $P$ and $\eigfm$, $\eigvm\ge n+1$, are orthogonal, then for $\bx\in\mfd$,
\begin{align}\label{eq:fiapprox.reproduce.poly}
    \fiapprox{n,\fiH}(P;\bx)
    &= \int_{\mfd} \sum_{\eigvm\le 2n}\fiH\Bigl(\frac{\eigvm}{n}\Bigr)\:\eigfm(\bx)\conj{\eigfm}(\PT{z}) P(\PT{z}) \dmf{z}\nonumber\\
    &= \int_{\mfd} \sum_{\eigvm\le n}\eigfm(\bx)\conj{\eigfm}(\PT{z}) P(\PT{z}) \dmf{z}
    = P(\bx).
\end{align}
The exactness of $\QH$ for degree $3n-1$ with \eqref{eq:fiapprox.reproduce.poly} then gives
\begin{align*}
    \fihyper{D,n}(P;\bx) &= \sum_{i=1}^{N} \wH\: \vdh{n,\fiH}(\bx,\pH{i}) P(\pH{i})\\
    &= \int_{\mfd}\vdh{n,\fiH}(\bx,\by)P(\by) \dmf{y} = \fiapprox{n,\fiH}(P;\bx) = P(\bx),
\end{align*}
thus completing the proof.
\end{proof}

\begin{proof}[Theorem~\ref{thm:fiapprox.err.Lp}] Let $P\in\polyspm$. 
By the linearity of $\fiapprox{n,\fiH}$ and Lemma~\ref{lm:fihyper.reproduce.p},
\begin{align*}
    \normb{f-\fiapprox{n,\fiH}(f)}{\Lpm{p}} &\le \norm{f-P}{\Lpm{p}} + \normb{\fiapprox{n,\fiH}(f-P)}{\Lpm{p}}\\
    &\le \left(1+\norm{\fiapprox{n,\fiH}}{\ptop}\right) \norm{f-P}{\Lpm{p}},
\end{align*}
which, as $P$ is an arbitrary polynomial in $\polyspm$, together with Theorem~\ref{thm:fiapprox.Bd} gives
\begin{equation*}
    \normb{f-\fiapprox{n,\fiH}(f)}{\Lpm{p}} \le c_{d,\fiH,\fis}\: \bestapprox{n}{f},
\end{equation*}
thus completing the proof.
\end{proof}

We go to prove Theorem~\ref{thm:nondfh_clean_det}, for which we need some lemmas as given below. 
The following theorem shows a Marcinkiewicz-Zygmund inequality for a quadrature rule on $\mfd$.
\begin{lemma}\label{lem:MZineq.mfd} Let $\QH=\{(\wH,\pH{i})\}_{i=1}^{N}$ be a positive quadrature rule on $\mfd$ satisfying for some $1\le p_{0}<\infty$, $c_{0}>0$ and $n\ge0$,
\begin{equation}\label{eq:MZineq.p0}
    \sum_{i=1}^{N} \wH | P(\pH{i})|^{p_{0}} \le c_{0} \int_{\mfd}|P(\by)|^{p_{0}}\dmf{y}, \quad P\in\polyspm.
\end{equation}
Then, for all $1\le p_{1}<\infty$ and $\ell>n$,
\begin{equation}\label{eq:MZineq.p1}
    \sum_{i=1}^{N} \wH | P(\pH{i})|^{p_{1}} \le c_{1} \left(\frac{\ell}{n}\right)^{d}\int_{\mfd}|P(\by)|^{p_{1}}\dmf{y},\quad P\in\polyspm[\ell],
\end{equation}
where $c_{1}$ depends only on $d$, $p_{0}$ and $c_{0}$.
\end{lemma}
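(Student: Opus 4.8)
}
The plan is to represent a degree-$\ell$ polynomial through a localized reproducing kernel and to transfer the quadrature information from the resolution scale $1/n$ down to the scale $1/\ell$, paying the factor $(\ell/n)^{d}$ for the mismatch. First I would fix, once and for all, an auxiliary filter in $\CkRp$ with $\fis\ge d+2$ in the sense of Definition~\ref{def:fiH}, and let $K_{\ell}$ and $V_{\ell}$ be the associated filtered kernel and filtered approximation of degree $\ell$. Since the filter equals $1$ on $[0,1]$, $V_{\ell}$ reproduces $\polyspm[\ell]$, so by Definition~\ref{defn:fiapp} every $P\in\polyspm[\ell]$ satisfies $P(\pH{i})=\int_{\mfd}K_{\ell}(\pH{i},z)\,P(z)\dmf{z}$ for each $i$. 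By Lemma~\ref{lem:fikerL1} with $p=1$ (and conjugate symmetry of $K_{\ell}$) one has $\normb{K_{\ell}(\pH{i},\cdot)}{\Lpm{1}}\le c$, and by Lemma~\ref{lem:localization} the kernel is localized, $|K_{\ell}(\bx,\by)|\le c\,\ell^{d}(1+\ell\,\dist{\bx,\by})^{-\fis}$, where the constants depend only on $d$ (the auxiliary filter being a fixed choice).

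Next I would apply Jensen's inequality to $P(\pH{i})=\int K_{\ell}(\pH{i},z)P(z)\dmf{z}$ with respect to the normalization of the measure $|K_{\ell}(\pH{i},z)|\dmf{z}$, whose total mass is at most $c$; this yields $|P(\pH{i})|^{p_{1}}\le c^{\,p_{1}-1}\int_{\mfd}|K_{\ell}(\pH{i},z)|\,|P(z)|^{p_{1}}\dmf{z}$. Multiplying by $\wH$, summing over $i$ and applying Tonelli's theorem reduces \eqref{eq:MZineq.p1} to the uniform pointwise estimate
\[
 \Psi(z):=\sum_{i=1}^{N}\wH\,\bigl|K_{\ell}(\pH{i},z)\bigr|\;\le\; c\,(\ell/n)^{d},\qquad z\in\mfd ,
\]
because then $\sum_{i}\wH\,|P(\pH{i})|^{p_{1}}\le c^{\,p_{1}-1}\int_{\mfd}|P|^{p_{1}}\,\Psi\le c_{1}\,(\ell/n)^{d}\,\norm{P}{\Lpm{p_{1}}}^{p_{1}}$.

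To bound $\Psi(z)$ I would first extract from the hypothesis \eqref{eq:MZineq.p0} a weight-distribution estimate at the coarse scale $1/n$: applying \eqref{eq:MZineq.p0} to the constant polynomial $1\in\polyspm$ gives $\sum_{i}\wH\le c_{0}$, and applying it to a degree-$n$ polynomial that is bounded below on $\ball{z,1/n}$ and has $\Lpm{p_{0}}$-norm of order $n^{-d/p_{0}}$ (such a localized polynomial exists by the kernel localization, as is standard in this setting) gives $\sum_{\pH{i}\in\ball{z,1/n}}\wH\le c\,n^{-d}$; a covering argument based on Assumption~\ref{assump:ball.vol} then upgrades this to $\sum_{\pH{i}\in\ball{z,\rho}}\wH\le c\,\max(\rho,1/n)^{d}$ for all $\rho>0$. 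I would then decompose the index set into the annuli $I_{j}:=\{\,i:\ j/n\le\dist{\pH{i},z}<(j+1)/n\,\}$, $j\ge0$. On $I_{0}$ one has $|K_{\ell}(\pH{i},z)|\le c\,\ell^{d}$ and $\sum_{i\in I_{0}}\wH\le c\,n^{-d}$, contributing $c\,(\ell/n)^{d}$; for $j\ge1$ the localization gives $|K_{\ell}(\pH{i},z)|\le c\,\ell^{\,d-\fis}n^{\fis}j^{-\fis}$ while $\sum_{i\in I_{j}}\wH\le c\,(j/n)^{d}$, so the tail is at most $c\,(n/\ell)^{\fis-d}\sum_{j\ge1}j^{\,d-\fis}\le c$, using $\fis\ge d+2$ and $\ell>n$. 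Adding the two contributions gives $\Psi(z)\le c\,(\ell/n)^{d}$, and hence \eqref{eq:MZineq.p1} with $c_{1}$ depending only on $d$, $p_{0}$ and $c_{0}$.

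The crux is precisely the passage between the two scales in the last paragraph: the reproducing kernel $K_{\ell}$ concentrates at resolution $1/\ell$, whereas \eqref{eq:MZineq.p0} only controls the weights down to resolution $1/n$, and matching them is what forces the loss $(\ell/n)^{d}$. The two points needing care are (i) choosing the auxiliary filter smooth enough, $\fis\ge d+2$, so that the annular tail $\sum_{j\ge1}j^{d-\fis}$ converges (with $\fis=d+1$ one would instead pick up a logarithmic factor), and (ii) the coarse-scale weight estimate, whose proof requires a localized degree-$n$ polynomial that is bounded below on a $(1/n)$-ball — a routine but nontrivial construction obtained from a kernel with a nonnegative filter near the diagonal together with a Bernstein-type inequality.
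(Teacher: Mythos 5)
Your proposal is correct and follows essentially the same route as the paper: represent $P\in\polyspm[\ell]$ through the reproducing filtered kernel of degree $\ell$, apply H\"older/Jensen using the uniform $\Lpm{1}$ bound on the kernel, swap sum and integral, and reduce everything to the weight-distribution estimate at scale $1/n$ combined with an annular decomposition exploiting the kernel's localization (the paper packages the weight estimate as Lemma~\ref{lm:R.property.MFD}, cited from Filbir--Mhaskar, and the annular sum as Lemma~\ref{lm:QH.loc.est}). The only cosmetic difference is that the paper bounds the weights over annuli rather than full balls, which lets it get away with the localization exponent $\fis= d+1$ where your ball-based bookkeeping requires $\fis\ge d+2$ to avoid a logarithm.
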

\begin{remark} \cite{Dai2006} proved Lemma~\ref{lem:MZineq.mfd} when $\mfd$ is the unit sphere $\mfd$.
\end{remark}

The proof of Lemma~\ref{lem:MZineq.mfd} relies on the following lemma of \cite{FiMh2011}, which shows that the sum of the weights, the corresponding nodes of which lie in the region $\ball{\bx_{0},\beta,\beta+\alpha}$, is bounded by a constant multiple of the measure of this region.
\begin{lemma}\label{lm:R.property.MFD} Let $d\ge1$ and let $\mfd$ be a $d$-dimensional compact Riemannian manifold. Let $\QH:=\{(\wH,\pH{i})\}_{i=1}^{N}$ be a positive quadrature rule on $\mfd$ satisfying \eqref{eq:MZineq.p0} for some $1\le p_{0}<\infty$, $c_{0}>0$ and $n\in\Nz$. Then for $\beta\ge0$, $\alpha\ge1/n$ and $\bx_{0}\in\mfd$,
\begin{equation*}
    \sum_{\bx_{i}\in\ball{\bx_{0},\beta,\beta+\alpha}}\hspace{-2mm} \wH\le c\: \memf(\ball{\bx_{0},\beta,\beta+\alpha}),
\end{equation*}
where the constant $c$ depends only on $d$.
\end{lemma}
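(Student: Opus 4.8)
The plan is to extract the estimate from the single inequality \eqref{eq:MZineq.p0} by testing it against a polynomial $P\in\polyspm$ that behaves like a smoothed indicator of the annulus $R:=\ball{\bx_{0},\beta,\beta+\alpha}$: we want $P\ge1$ on $R$ and $\norm{P}{\Lpm{p_{0}}}^{p_{0}}\le C\,\memf(R)$. Once such a $P$ is available, \eqref{eq:MZineq.p0} gives
\begin{equation*}
  \sum_{\bx_{i}\in R}\wH\;\le\;\sum_{i=1}^{N}\wH\,|P(\bx_{i})|^{p_{0}}\;\le\;c_{0}\int_{\mfd}|P(\by)|^{p_{0}}\dmf{y}\;\le\;c_{0}\,C\,\memf(R),
\end{equation*}
which is the assertion. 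The whole proof therefore reduces to constructing $P$.

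Fix, once and for all, an auxiliary filter $\widetilde{H}\in C^{d+2}(\Rplus)$ supported in $[0,1]$ with $\widetilde{H}(0)=1$ (for instance $\widetilde{H}(t)=(1-t)_{+}^{d+3}$), and let $\widetilde{K}_{L}:=\vdh{L,\widetilde{H}}$ be its filtered kernel. Then $\widetilde{K}_{L}$ reproduces constants, $\int_{\mfd}\widetilde{K}_{L}(\bx,\by)\dmf{y}=\widetilde{H}(0)=1$; by Lemma~\ref{lem:localization}, $|\widetilde{K}_{L}(\bx,\by)|\le c\,L^{d}(1+L\dist{\bx,\by})^{-(d+2)}$; and by Lemma~\ref{lem:fikerL1} with $p=1$, $\sup_{\by}\norm{\widetilde{K}_{L}(\cdot,\by)}{\Lpm{1}}\le c$. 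Summing the localization bound over the shells $k/L\le\dist{\bx,\by}<(k+1)/L$ and using Assumption~\ref{assump:ball.vol} ($\memf(\ball{\bx,t})=ct^{d}$), the ``tail mass'' $\sup_{\bx}\int_{\dist{\bx,\by}\ge T/L}|\widetilde{K}_{L}(\bx,\by)|\dmf{y}$ is bounded by a constant times $(1+T)^{-1}$, uniformly in $L$; fix $T\ge1$, depending only on $d$, so that this tail mass is at most $\tfrac12$. Now set $L:=\lceil 1/\alpha\rceil$, which satisfies $1\le L\le n$ precisely because $\alpha\ge1/n$, put $\widetilde{R}:=\ball{\bx_{0},(\beta-T/L)_{+},\beta+\alpha+T/L}$ (the annulus $R$ fattened inward and outward by $T/L$), and define
\begin{equation*}
  P(\bx):=2\,(\widetilde{K}_{L}\conv\chi_{\widetilde{R}})(\bx)=2\int_{\widetilde{R}}\widetilde{K}_{L}(\bx,\by)\dmf{y}.
\end{equation*}

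Three points remain. \emph{(i) $P\in\polyspm$:} the $\ell$-th Fourier coefficient of $\widetilde{K}_{L}\conv\chi_{\widetilde{R}}$ equals $\widetilde{H}(\eigvm/L)\int_{\widetilde{R}}\conj{\eigfm(\by)}\dmf{y}$, which vanishes unless $\eigvm<L\le n$. \emph{(ii) $P\ge1$ on $R$:} if $\bx\in R$ and $\by\notin\widetilde{R}$, the triangle inequality forces $\dist{\bx,\by}\ge T/L$, so by reproduction of constants and the choice of $T$,
\begin{equation*}
  \tfrac12 P(\bx)=1-\int_{\mfd\setminus\widetilde{R}}\widetilde{K}_{L}(\bx,\by)\dmf{y}\;\ge\;1-\tfrac12\;=\;\tfrac12 .
\end{equation*}
\emph{(iii) Size of $P$:} the $L_{1}$-bound on $\widetilde{K}_{L}$ yields at once $\norm{P}{\Lpm{\infty}}\le c$ and $\norm{P}{\Lpm{1}}\le c\,\memf(\widetilde{R})$, hence $\norm{P}{\Lpm{p_{0}}}^{p_{0}}\le\norm{P}{\Lpm{\infty}}^{p_{0}-1}\norm{P}{\Lpm{1}}\le c\,\memf(\widetilde{R})$; and since $T/L\le T\alpha$ is comparable to $\alpha$, Assumption~\ref{assump:ball.vol} together with the elementary inequality $(a+b)^{d}\ge a^{d}+b^{d}$ for $a,b\ge0$, $d\ge1$, gives $\memf(\widetilde{R})\le C\,\memf(R)$ with $C$ depending only on $d$. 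Inserting this $P$ into the displayed chain completes the proof.

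I expect the main obstacle to be the construction of $P$, not any individual estimate. One must make the smoothed indicator simultaneously bounded below on all of $R$ — which forces the resolution $1/L$ of the approximating kernel to be of the order of the annulus width $\alpha$, and this is exactly the reason for the hypothesis $\alpha\ge1/n$ — and an honest element of $\polyspm$ — which is why one enlarges $R$ to $\widetilde{R}$ before convolving, and why it is convenient to use an auxiliary filter supported in $[0,1]$ (rather than $[0,2]$), so that the degree of $\widetilde{K}_{L}\conv\chi_{\widetilde{R}}$ lands inside $\polyspm$ as soon as $L\le n$. Everything that follows — the tail-mass bound, the interpolation estimate for $\norm{P}{\Lpm{p_{0}}}$, and the volume comparison $\memf(\widetilde{R})\le C\,\memf(R)$ — is a routine consequence of Lemmas~\ref{lem:localization} and \ref{lem:fikerL1} and Assumption~\ref{assump:ball.vol}.
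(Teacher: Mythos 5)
The paper does not actually prove this lemma --- it is imported verbatim from \cite{FiMh2011} --- so there is no in-paper argument to compare against. What you have written is, in essence, a self-contained reconstruction of the standard argument behind that citation, and it is correct: you test the single hypothesis \eqref{eq:MZineq.p0} against a low-degree ``smoothed indicator'' of the annulus $R=\ball{\bx_0,\beta,\beta+\alpha}$. All three of your verifications go through: (i) $P\in\polyspm$ because the auxiliary filter is supported in $[0,1]$ and $L=\lceil 1/\alpha\rceil\le n$, which is exactly where the hypothesis $\alpha\ge 1/n$ is used; (ii) $P\ge 1$ on $R$ by reproduction of constants (here one uses $\eigfm[0]\equiv 1$ and $\memf(\mfd)=1$) together with the tail-mass bound coming from Lemma~\ref{lem:localization} and Assumption~\ref{assump:ball.vol}; (iii) $\norm{P}{\Lpm{p_0}}^{p_0}\le \norm{P}{\Lpm{\infty}}^{p_0-1}\norm{P}{\Lpm{1}}\le c\,\memf(\widetilde R)$ from Lemma~\ref{lem:fikerL1}. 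Your closing remark correctly identifies the one genuinely structural point, namely that the kernel resolution must match the annulus width, which forces $L\asymp 1/\alpha$ and hence the hypothesis $\alpha\ge 1/n$.

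Two small repairs. First, the volume comparison $\memf(\widetilde R)\le C\,\memf(R)$ needs slightly more than the inequality $(a+b)^d\ge a^d+b^d$: that inequality only yields $\memf(R)\ge c\,\alpha^d$, whereas $\memf(\widetilde R)$ is of order $\beta^{d-1}\alpha$ when $\beta\gg\alpha$, which is much larger than $\alpha^d$. The fix is routine but should be recorded: either split into the cases $\beta\le T\alpha$ (where $\memf(\widetilde R)\le c\,((2T+1)\alpha)^d\le (2T+1)^d\,\memf(R)$) and $\beta>T\alpha$ (where the mean value theorem gives $\memf(\widetilde R)\le c\,d\,(2T+1)\alpha\,(\beta+(T+1)\alpha)^{d-1}$ and $\memf(R)\ge c\,d\,\alpha\,\beta^{d-1}$, with ratio at most $(2T+1)^d/T^{d-1}$), or equivalently cover $\widetilde R$ by $O(T)$ translated annuli of width $\alpha$. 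Since $T$ depends only on $d$, the constant is still as claimed. Second, the constant your argument produces depends on $d$, $p_0$ and $c_0$ (through the factor $c_0\norm{P}{\Lpm{\infty}}^{p_0-1}$), not on $d$ alone as the lemma's statement asserts; this discrepancy is in the paper's statement rather than in your proof, and it is harmless because the only place the lemma is used, Lemma~\ref{lem:MZineq.mfd}, explicitly allows its constant to depend on $d$, $p_0$ and $c_0$.
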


Let
\begin{equation}\label{eq:A.L}
    A_{\ell}(\theta):= \frac{\ell^{d}}{(1+ \ell \theta)^{d+1}},\quad \ell\in\N,\;\theta \in [0,\pi].
\end{equation}
Lemma~\ref{lm:R.property.MFD} implies the following estimate for a quadrature rule.
\begin{lemma}\label{lm:QH.loc.est} Let $d\ge1$ and let $\mfd$ be a $d$-dimensional compact Riemannian manifold. Let $\QH:=\{(\wH,\pH{i})\}_{i=1}^{N}$ be a quadrature rule on $\mfd$ satisfying \eqref{eq:MZineq.p0} for some $1\le p_{0}<\infty$, $c_{0}>0$ and $n\in\Nz$. Let $A_{n}(\theta)$ be given by \eqref{eq:A.L}. Then, for $\ell\ge n$,
\begin{equation*}
    \max_{\bx\in\mfd} \sum_{i=1}^{N}\wH\: A_{\ell}(\dist{\bx,\pH{i}}) \le c\: \left(\frac{\ell}{n}\right)^{d},
\end{equation*}
where the constant $c$ depends only on $d$.
\end{lemma}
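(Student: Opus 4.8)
The plan is to fix $\bx\in\mfd$, chop $\mfd$ into geodesic annuli centred at $\bx$, bound each annulus's contribution by combining the decay of $A_\ell$ with the weight estimate of Lemma~\ref{lm:R.property.MFD} and the volume formula of Assumption~\ref{assump:ball.vol}, and then sum a convergent series. Concretely, for $k\in\Nz$ I would set $R_k:=\ball{\bx,k/n,(k+1)/n}$, so that $R_0=\ball{\bx,1/n}$ and the $R_k$ partition $\mfd$ up to a null set (only finitely many contain a node of $\QH$, since $\mfd$ is compact), and split
\begin{equation*}
\sum_{i=1}^{N}\wH\,A_\ell(\dist{\bx,\pH{i}})=\sum_{k=0}^{\infty}\ \sum_{\pH{i}\in R_k}\wH\,A_\ell(\dist{\bx,\pH{i}}).
\end{equation*}

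On $R_k$ I would use that $\theta\mapsto A_\ell(\theta)=\ell^{d}/(1+\ell\theta)^{d+1}$ is non-increasing, so that each node $\pH{i}\in R_k$ satisfies $A_\ell(\dist{\bx,\pH{i}})\le \ell^{d}/(1+\ell k/n)^{d+1}$. Meanwhile Lemma~\ref{lm:R.property.MFD}, applied with $\beta=k/n$ and $\alpha=1/n$ (legitimate since $\alpha\ge 1/n$), gives $\sum_{\pH{i}\in R_k}\wH\le c\,\memf(R_k)$, and Assumption~\ref{assump:ball.vol} evaluates $\memf(R_k)=c\,n^{-d}\bigl[(k+1)^{d}-k^{d}\bigr]\le c\,d\,n^{-d}(k+1)^{d-1}$, the last step by the mean value theorem. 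Multiplying the two bounds, the $k$-th block contributes at most $c\,d\,(\ell/n)^{d}\,(k+1)^{d-1}/(1+\ell k/n)^{d+1}$, with $c$ a generic constant depending only on $d$.

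The last step is the summation, and this is where the hypothesis $\ell\ge n$ is used: since $\ell/n\ge1$ we get $1+\ell k/n\ge 1+k$, hence $(k+1)^{d-1}/(1+\ell k/n)^{d+1}\le (k+1)^{-2}$, and therefore
\begin{equation*}
\sum_{i=1}^{N}\wH\,A_\ell(\dist{\bx,\pH{i}})\le c\,d\,\Bigl(\frac{\ell}{n}\Bigr)^{d}\sum_{k=0}^{\infty}\frac{1}{(k+1)^{2}}=\frac{c\,d\,\pi^{2}}{6}\Bigl(\frac{\ell}{n}\Bigr)^{d};
\end{equation*}
taking $\max_{\bx\in\mfd}$ then finishes the proof. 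I do not expect a genuine obstacle here: the argument is short once the decomposition is in place. The one delicate point is that Lemma~\ref{lm:R.property.MFD} forces annuli of width at least $1/n$ (rather than the width $1/\ell$ that would match the intrinsic scale of $A_\ell$), so the factor $\ell\ge n$ has to be carried through the geometric-type series, and one must check that it still collapses to the exponent $d$ — and not a worse power — in $(\ell/n)^{d}$.
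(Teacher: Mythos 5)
Your proof is correct and follows essentially the same route as the paper's: decompose $\mfd$ into annuli of width $1/n$ centred at $\bx$, bound the weight mass of each annulus via Lemma~\ref{lm:R.property.MFD} together with Assumption~\ref{assump:ball.vol}, use the decay of $A_\ell$ on each annulus, and sum a convergent series using $\ell\ge n$. The only (cosmetic) differences are that you keep the $1$ in $1+\ell k/n$ and sum over all $k$ uniformly, whereas the paper drops it for $k\ge1$ and treats the innermost ball and the outermost region (beyond the diameter) as separate cases.
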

\begin{proof} Let $\bx\in\mfd$. Since $\mfd$ is compact, $\mfd$ is bounded, i.e. there exists $0<r<\infty$ such that $\mfd\subseteq\ball{\bx,r}$. Using Lemma~\ref{lm:R.property.MFD},
\begin{align*}
    &\sum_{i=1}^{N} \wH\: A_{\ell}(\dist{\bx,\pH{i}})\\
    &\quad\le \ell^{d}\sum_{\pH{i}\in \ball{\bx,1/n}}\wH + \sum_{k=1}^{
    \floor{r n}-1}\sum_{\pH{i}\in\ball{\bx,k/n,(k+1)/n}}\wH\: \ell^{d}\left(\frac{\ell k}{n}\right)^{-(d+1)} + \ell^{-1} \sum_{\pH{i}\in \ball{\bx,\floor{r n}/n,r}}\wH\\
    &\quad\le c\: \ell^{d}\memf\bigl(\ball{\bx,1/n}\bigr) + c\:\ell^{-1} \sum_{k=1}^{\floor{rn}-1} \left(\frac{n}{k}\right)^{d+1} \memf\bigl(\ball{\bx,k/n,(k+1)/n}\bigr) + c\:\ell^{-1}\memf\bigl(\ball{\bx,\floor{rn}/n,r}\bigr)\\
    &\quad\le c_{d}\: \left(\frac{\ell}{n}\right)^{d},
\end{align*}
where the last inequality uses Assumption~\ref{assump:ball.vol} and $\memf\bigl(\ball{\bx,k/n,(k+1)/n}\bigr)=c_{d} \:k^{d-1}/n^d$.
\end{proof}

\begin{proof}[Lemma~\ref{lem:MZineq.mfd}] For $1\le p_{1}<\infty$, using \eqref{eq:fiapprox.reproduce.poly} and H\"{o}lder's inequality gives, for $P\in\polyspm$ and $\bx\in\mfd$,
\begin{equation}\label{eq:poly.conv.bound}
    |P(\bx)|^{p_{1}} \le \left(\int_{\mfd}|\vdh{n,\fiH}(\bx,\PT{z})||P(\PT{z})|^{p_{1}}\dmf{z}\right)\left(\int_{\mfd}|\vdh{n,\fiH}(\bx,\PT{z})|\dmf{z}\right)^{p_{1}-1}.
\end{equation}
Lemma~\ref{lem:fikerL1} shows that the second integral of the filtered kernel on the right-hand side is bounded. This with \eqref{eq:poly.conv.bound} gives
\begin{equation*}
    |P(\bx)|^{p_{1}} \le c\:\left(\int_{\mfd}|\vdh{n,\fiH}(\bx,\PT{z})||P(\PT{z})|^{p_{1}}\dmf{z}\right),
\end{equation*}
where the constant $c$ depends only on $d,p_{1},\fiH$ and $\fis$. Summing over quadrature nodes, we then obtain by Lemmas~\ref{lm:QH.loc.est} and \ref{lem:localization} that
\begin{align*}
    \sum_{i=1}^{N}\wH |P(\pH{i})|^{p_{1}}
    &\le c \int_{\mfd}|P(\PT{z})|^{p_{1}}\sum_{i=1}^{N}\wH |\vdh{n,\fiH}(\pH{i},\PT{z})|\dmf{z}\\
    &\le c \left(\max_{\PT{z}\in\mfd}\sum_{i=1}^{N}\wH \:A_{\ell}\bigl(\dist{\pH{i},\PT{z}}\bigr)\right) \norm{P}{\Lpm{p_{1}}}^{p_{1}}\\
    &\le c \left(\frac{\ell}{n}\right)^{d}\norm{P}{\Lpm{p_{1}}}^{p_{1}},
\end{align*}
where the constant $c$ depends only on $d,p_{1},\fiH$ and $\fis$.
\end{proof}

The proof of optimal-order error for filtered hyperinterpolation utilises its decomposition by framelets on manifolds \citep{wang2020tight,WaLeSlWo2017,WaSl2017}. 
Given $\fiH\in\CkRp$, $\fis\ge1$, we define recursively the \emph{contributions} of levels $j\in\Nz$ for $f\in \Lpm{p}$ by
\begin{equation}\label{eq:contrib}
    \contrib{0}(f) := \fiapprox{2^{-1}}(f) := 1,\quad \contrib{j}(f) := \fiapprox{2^{j-1}}(f) - \fiapprox{2^{j-2}}(f), \;\; j\in\N,
\end{equation}
The following lemma shows that $\contrib{j}(f)$ forms a decomposition of $f\in \Lpm{p}$, and it gives an upper bound of the $L_p$-norm of $\contrib{j}(f)$ for $f\in\sobm{p}{s}$.
\begin{lemma}\label{lm:contrib.decomp.nrm.BD} Let $1\le p\le \infty$, $d\ge2$, $s>0$. Then,
\begin{align}
    & \lim_{\ctrord\to\infty}\normB{\sum_{j=0}^{\ctrord}\contrib{j}(f)-f}{\Lpm{p}} = 0,\quad f\in\Lpm{p}, \label{eq:Lp.err.Uj}\\[1mm]
    & \normb{\contrib{j}(f)}{\Lpm{p}} \le c\: 2^{-js}\: \norm{f}{\sobm{p}{s}},\quad j\in \N, \; f\in\sobm{p}{s}, \label{eq:Lp.nrm.Uj}
\end{align}
where the constant $c$ depends only on $d$, $p$, $s$, $\fiH$ and $\fis$.
\end{lemma}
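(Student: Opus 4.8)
The plan is to obtain both statements from the $\Lpm{p}$-error estimate for filtered approximation (Theorem~\ref{thm:fiapprox.err.Lp}) together with the Sobolev best-approximation estimate (Lemma~\ref{lem:best.approx.sob.mfd}), the link being a telescoping identity for the contributions. From the definition \eqref{eq:contrib}, $\contrib{0}(f)=\fiapprox{2^{-1}}(f)$ and $\contrib{j}(f)=\fiapprox{2^{j-1}}(f)-\fiapprox{2^{j-2}}(f)$ for $j\ge1$, so the partial sums collapse to $\sum_{j=0}^{\ctrord}\contrib{j}(f)=\fiapprox{2^{\ctrord-1}}(f)$ for every $\ctrord\in\N$. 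Applying Theorem~\ref{thm:fiapprox.err.Lp} with degree $n=2^{\ctrord-1}\in\Nz$ then gives $\normb{\sum_{j=0}^{\ctrord}\contrib{j}(f)-f}{\Lpm{p}}\le c\,\bestapprox{2^{\ctrord-1}}{f}$, and since $\cup_{n=0}^{\infty}\polyspm$ is dense in $\Lpm{p}$ the right-hand side tends to $0$ as $\ctrord\to\infty$; this is \eqref{eq:Lp.err.Uj}.

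For \eqref{eq:Lp.nrm.Uj}, fix $j\ge2$ and split $\contrib{j}(f)=\bigl(\fiapprox{2^{j-1}}(f)-f\bigr)-\bigl(\fiapprox{2^{j-2}}(f)-f\bigr)$. By the triangle inequality, Theorem~\ref{thm:fiapprox.err.Lp}, and monotonicity of $\bestapprox{n}{f}$ in $n$,
\begin{equation*}
  \normb{\contrib{j}(f)}{\Lpm{p}}\le c\bigl(\bestapprox{2^{j-1}}{f}+\bestapprox{2^{j-2}}{f}\bigr)\le 2c\,\bestapprox{2^{j-2}}{f},
\end{equation*}
and Lemma~\ref{lem:best.approx.sob.mfd} yields $\bestapprox{2^{j-2}}{f}\le c\,(2^{j-2})^{-s}\norm{f}{\sobm{p}{s}}=c\,4^{s}\,2^{-js}\norm{f}{\sobm{p}{s}}$, which (with $s$ fixed) is the claimed estimate. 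The remaining case $j=1$, where $\contrib{1}(f)=\fiapprox{1}(f)-\fiapprox{2^{-1}}(f)$, follows directly from $\norm{\fiapprox{1}}{\ptop}\le c$ (Theorem~\ref{thm:fiapprox.Bd}), the bound $\norm{f}{\Lpm{p}}\le\norm{f}{\sobm{p}{s}}$, and the fact that the degree-zero term has $\Lpm{p}$-norm at most $\norm{f}{\Lpm{1}}\le\norm{f}{\Lpm{p}}$; as $2^{-s}$ is a fixed positive constant, this absorbs into the bound.

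The argument is essentially routine once Theorems~\ref{thm:fiapprox.Bd}, \ref{thm:fiapprox.err.Lp} and Lemma~\ref{lem:best.approx.sob.mfd} are in hand; the only point requiring a little care is the bookkeeping for the low-level terms $j=0,1$ and the convention attached to $\fiapprox{2^{-1}}$, which contributes only a fixed multiplicative constant and does not disturb the $2^{-js}$ decay. An alternative, more self-contained derivation of \eqref{eq:Lp.nrm.Uj} would observe that $\fiH(\cdot/2^{j-1})-\fiH(\cdot/2^{j-2})$ is a rescaled $\CkRp$ filter supported away from the origin, represent $\contrib{j}(f)$ as a convolution against the associated localized kernel (via Lemma~\ref{lem:localization}), and extract the factor $2^{-js}$ using a factorization $g(t)=t^{s}h(t)$ on its support; the best-approximation route above is shorter, however, and re-uses only results already established.
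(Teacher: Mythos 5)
Your proof is correct and follows essentially the same route as the paper's: telescoping the partial sums to $\fiapprox{2^{\ctrord-1}}(f)$ and invoking Theorem~\ref{thm:fiapprox.err.Lp} for \eqref{eq:Lp.err.Uj}, then a triangle inequality combined with the Sobolev error estimate for \eqref{eq:Lp.nrm.Uj} (the paper cites Theorem~\ref{thm:fiapprox.err.Wp} directly where you unpack it into Theorem~\ref{thm:fiapprox.err.Lp} plus Lemma~\ref{lem:best.approx.sob.mfd}). Your separate treatment of the $j=1$ term involving $\fiapprox{2^{-1}}$ is a small extra care the paper's proof silently skips, and it is handled correctly.
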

\begin{proof} For $f\in\Lpm{p}$, Theorem~\ref{thm:fiapprox.err.Lp} with \eqref{eq:contrib} gives
\begin{equation*}
    \normB{\sum_{j=0}^{\ctrord}\contrib{j}(f)-f}{\Lpm{p}} = \normb{\fiapprox{2^{\ctrord-1}}(f)-f}{\Lpm{p}}\le c_{d,\fiH,\fis}\: \bestapprox{2^{\ctrord-1}}{f}.
\end{equation*}
This with $\lim_{\ctrord\to\infty}\bestapprox{2^{\ctrord-1}}{f}=0$ gives \eqref{eq:Lp.err.Uj}.
For $f\in\sobm{p}{s}$ and $j\in\N$, Theorem~\ref{thm:fiapprox.err.Wp} with \eqref{eq:contrib} gives
\begin{align*}
    \normb{\contrib{j}(f)}{\Lpm{p}}
    &\le \normb{\fiapprox{2^{j-1}}(f)-f}{\Lpm{p}} + \normb{\fiapprox{2^{j-2}}(f)-f}{\Lpm{p}}\\
    &\le c\: 2^{-js}\:\norm{f}{\sobm{p}{s}},
\end{align*}
where the constant $c$ depends only on $d$, $p$, $s$, $\fiH$ and $\fis$.
\end{proof}

\begin{proof}[Theorem~\ref{thm:nondfh_clean_det}]  For $p=\infty$, Lemma~\ref{lm:fihyper.reproduce.p} with the linearity of $\fihyper{D^*,n}$ shows that for $q\in \polyspm$,
\begin{align}\label{eq:fiapprox.inf.nrm}
    \normb{f-\fihyper{D^*,n}(f)}{\Lpm{\infty}} &= \normb{(f-q)-\fihyper{D^*,n}(f-q)}{\Lpm{\infty}}\notag\\
    &\le \left(1+\normb{\fihyper{D^*,n}}{\ptop[\infty]}\right)\:\normb{f-q}{\Lpm{\infty}},
\end{align}
where using standard arguments,
\begin{equation}\label{eq:inf.inf.nrm.fihyper}
    \normb{\fihyper{D^*,n}}{\ptop[\infty]} := \sup_{\bx\in\mfd} \sum_{i=1}^{N} \wH \: |\fiker{n}(\bx,\pH{i})|.
\end{equation}
Taking the minimum over $q\in\polyspm$ of the right-hand side of \eqref{eq:fiapprox.inf.nrm} with Lemma~\ref{lem:best.approx.sob.mfd} gives
\begin{align*}
    \normb{f-\fihyper{D^*,n}(f)}{\Lpm{\infty}} &= \normb{(f-q)-\fihyper{D^*,n}(f-q)}{\Lpm{\infty}}\\
    &\le \left(1+\normb{\fihyper{D^*,n}}{\ptop[\infty]}\right)\:\bestapprox[\infty]{L}{f}\\
    &\le c_{d,s}\:\left(1+\normb{\fihyper{D^*,n}}{\ptop[\infty]}\right)\: n^{-s}\:\norm{f}{\sobm{\infty}{s}}.
\end{align*}
Since the quadrature rule $\QH$ is exact for degree $3n-1$, the condition of Lemma~\ref{lem:MZineq.mfd} is satisfied for $p_{0}=2$, then \eqref{eq:MZineq.p1} holds for $p_{1}=1$. This with \eqref{eq:inf.inf.nrm.fihyper} and Lemma~\ref{lem:fikerL1} gives
\begin{equation*}
    \normb{\fihyper{D^*,n}}{\ptop[\infty]} \le c_{d} \sup_{\bx\in\mfd}\int_{\mfd} |\fiker{n}(\bx,\by)|\dmf{y}\le c_{d,\fiH,\fis}.
\end{equation*}
Thus,
\begin{align*}
    \normb{f-\fihyper{D^*,n}(f)}{\Lpm{\infty}}
    \le c_{d,\fiH,\fis,s}\: n^{-s}\:\norm{f}{\sobm{\infty}{s}}.
\end{align*}

We next consider for $p\in [1,\infty)$.  Given $n\ge0$, let $m$ be the integer satisfying $2^{m}\le L<2^{m+1}$. Since $\contrib{j}(f)\in\polyspm[2^{j+1}]$, $\fihyper{D^*,n}$ reproduces $\contrib{j}(f)$ for $j\le m-1$, that is, $\fihyper{D^*,n}(\contrib{j}(f))=\contrib{j}(f)$, $j\le m-1$. Lemma~\ref{lm:contrib.decomp.nrm.BD} then gives
\begin{align}\label{eq:fihyper.err.via.contrib}
    \normb{f-\fihyper{D^*,n}(f)}{\Lpm{p}} &= \lim_{\ctrord\to\infty} \normB{\sum_{j=0}^{\ctrord}\contrib{j}\left(f-\fihyper{D^*,n}(f)\right)}{\Lpm{p}}\notag\\
    &= \lim_{\ctrord\to\infty} \normB{\sum_{j=m}^{\ctrord}\left(\contrib{j}(f)-\fihyper{D^*,n}(\contrib{j}(f))\right)}{\Lpm{p}}\notag\\
    &\le \sum_{j=m}^{\infty}\left(\norm{\contrib{j}(f)}{\Lpm{p}}+ \normb{\fihyper{D^*,n}(\contrib{j}(f))}{\Lpm{p}}\right).
\end{align}
To bound the right-hand side of the last inequality in \eqref{eq:fihyper.err.via.contrib}, we need the following estimate.
\begin{equation}\label{eq:fihyper.contrib.f.Lp.nrm}
    \normb{\fihyper{D^*,n}(\contrib{j}(f))}{\Lpm{p}} \le c\:\left(\frac{2^{j+1}}{n}\right)^{d/p} \normb{\contrib{j}(f)}{\Lpm{p}},
\end{equation}
where the constant $c$ depends only on $d$, $p$, $\fiH$ and $\fis$. For $p=1$ and $j\ge m$,
\begin{align*}
    \normb{\fihyper{D^*,n}(\contrib{j}(f))}{\Lpm{1}} &= \normB{\sum_{i=1}^{N}\wH\: \fiker{n}(\pH{i},\cdot)\:\contrib{j}(f;\pH{i})}{\Lpm{1}}\\
    &\le\sum_{i=1}^{N}\wH\: |\contrib{j}(f;\pH{i})|\:\norm{\fiker{n}(\pH{i},\cdot)}{\Lpm{1}}\\
    &\le c_{d,\fiH,\fis}\: \sum_{i=1}^{N}\wH\: |\contrib{j}(f;\pH{i})|\\
    &\le c_{d,\fiH,\fis}\: \left(\frac{2^{j+1}}{n}\right)^{d}\normb{\contrib{j}(f)}{\Lpm{1}},
\end{align*}
where the penultimate inequality uses Lemma~\ref{lem:fikerL1} and the last uses Lemma~\ref{lem:MZineq.mfd} with $p_{1}=1$. For $1<p<\infty$ and $j\ge m$, by H\"{o}lder's inequality,
\begin{align*}
    &\normb{\fihyper{D^*,n}(\contrib{j}(f))}{\Lpm{p}}^{p}\\
    &\quad = \int_{\mfd}\left|\sum_{i=1}^{N}\wH\: \fiker{n}(\bx,\pH{i})\:\contrib{j}(f;\pH{i})\right|^{p}\dmf{x}\\
    &\quad \le \int_{\mfd}\left(\sum_{i=1}^{N}\wH\: |\fiker{n}(\bx,\pH{i})|\:|\contrib{j}(f;\pH{i})|\right)^{p}\dmf{x}\\
    &\quad \le \int_{\mfd}\left(\sum_{i=1}^{N}\left(\wH\:|\fiker{n}(\bx,\pH{i})|\right)^{\frac{p-1}{p}}
    \left(\wH\: |\fiker{n}(\bx,\pH{i})|\:|\contrib{j}(f;\pH{i})|^{p}\right)^{\frac{1}{p}}\right)^{p}\dmf{x}\\
    &\quad \le \int_{\mfd}\left(\sum_{i=1}^{N}\wH\:|\fiker{n}(\bx,\pH{i})|\right)^{p-1}
    \left(\sum_{i=1}^{N}\wH\: |\fiker{n}(\bx,\pH{i})|\:|\contrib{j}(f;\pH{i})|^{p}\right)\dmf{x}.
\end{align*}
Using Lemma~\ref{lem:MZineq.mfd} with $p_{1}=1$ and $p_{1}=p$ and Lemma~\ref{lem:fikerL1} then gives
\begin{align*}
    &\normb{\fihyper{D^*,n}(\contrib{j}(f))}{\Lpm{p}}^{p}\\
    &\quad \le\left(c_{d}\:\max_{\bx\in\mfd}\normb{\fiker{n}(\bx,\cdot)}{\Lpm{1}}\right)^{p-1}
    \sum_{i=1}^{N}\wH\: |\contrib{j}(f;\pH{i})|^{p}\:\int_{\mfd}|\fiker{n}(\bx,\pH{i})|\dmf{x}\\
    &\quad \le c_{d,p,\fiH,\fis}\: \sum_{i=1}^{N}\wH\: |\contrib{j}(f;\pH{i})|^{p}\\
    &\quad \le c_{d,p,\fiH,\fis}\: \left(\frac{2^{j+1}}{n}\right)^{d} \normb{\contrib{j}(f)}{\Lpm{p}}^{p},
\end{align*}
which proves \eqref{eq:fihyper.contrib.f.Lp.nrm} for $p\in(1,\infty)$.
It follows from \eqref{eq:fihyper.err.via.contrib} and \eqref{eq:fihyper.contrib.f.Lp.nrm} that for $f\in \sobm{p}{s}$, $1\le p<\infty$, $s>d/p$,
\begin{align*}
    \normb{f-\fihyper{D^*,n}(f)}{\Lpm{p}}
    &\le c_{d,p,\fiH,\fis}\sum_{j=m}^{\infty}\left(1+\left(\frac{2^{j+1}}{n}\right)^{d/p}\right) \norm{\contrib{j}(f)}{\Lpm{p}}\\
    &\le c_{d,p,\fiH,\fis}\sum_{j=m}^{\infty}\left(1+\left(\frac{2^{j+1}}{n}\right)^{d/p}\right)2^{-js} \norm{f}{\sobm{p}{s}},
\end{align*}
where the second inequality uses \eqref{eq:Lp.nrm.Uj}, and where since $n\asymp 2^{m}$ and $s>d/p$,
\begin{align*}
    \sum_{j=m}^{\infty}\left(1+\left(\frac{2^{j+1}}{n}\right)^{d/p}\right)2^{-js}
    &\le c_{d,p,s}\:\sum_{j=m}^{\infty}\left(1+\left(\frac{2^{j+1}}{2^{m}}\right)^{d/p}\right)2^{-js}\\
    &\le c_{d,p,s}\:2^{-md/p}\:\sum_{j=m}^{\infty}\left(2^{md/p}+ 2^{(j+1)d/p}\right)2^{-js}\\
    &\le c_{d,p,s}\:2^{-md/p}\:\sum_{j=m}^{\infty}2^{-j(s-d/p)}\\
    &\le c_{d,p,s}\:2^{-ms}\\
    &\le c_{d,p,s}\:n^{-s},
\end{align*}
thus proving \eqref{eq:fihyper.Lp.err.Wp}.
\end{proof}

\begin{proof}[Theorem~\ref{thm:ndfh_ran_clean}]
Let $\{w_{i}\}_{i=1}^{|D^*|}$ be the real numbers computed in \eqref{eq:wi_dfh_ran}. Since $\{\bx_i\}_{i=1}^{|D^*|}$ is a set of random points on $\mfd$, we define four events, as follows. Let $
     \Omega_{D^*}$ be the event such that
     $\sum_{i=1}^{|D^*|}|w_{i}|^2\leq\frac{2}{|D^*|}$ and $\Omega_{D^*}^c$ be
the complement of $\Omega_{D^*}$, i.e. $\Omega_{D^*}^c$ be the event
$\sum_{i=1}^{|D^*|}|w_{i}|^2>\frac{2}{|D^*|}$. Let $\Xi_{D^*}$
the event that $\{(w_{i},\bx_i)\}_{i=1}^{|D^*|}$ is a
quadrature rule exact for polynomials in $\Pi_n^d$ associated with the measure $\nu$ and $\Xi_{D^*}^c$ the complement event of $\Xi_{D^*}$. Then, by Lemma \ref{lem:random_quadr},
\begin{equation}\label{eq:prob estimate for events}
     \mathbf P\{\Omega_{D^*}^c\}\leq\mathbf P\{\Xi_{D^*}^c\}\leq
     4\exp\left\{-C|D^*|/n^d\right\}.
\end{equation}
We write
\begin{align}
      &\mathbf{E}\left\{\|V_{D^*,n}-f^*\|^2_{L_2(\mfd)}\right\}\label{eq:total error}\\
      &\quad=\mathbf{E}\left\{\|V_{D^*,n}-f^*\|^2_{L_2(\mfd)}|\Omega_{D^*}\right\}\mathbf P\{\Omega_{D^*}\}
      +\mathbf{E}\left\{\|V_{D^*,n}-f^*\|^2_{L_2(\mfd)}|\Omega_{D^*}^c\right\}\mathbf P\{\Omega_{D^*}^c\}.\notag 
\end{align}
Under the event $\Omega_{D^*}^c$, using the weights in \eqref{eq:wi_dfh_ran}, we obtain that $V_{D^*,n}=0$. Then, by \eqref{eq:prob estimate for events},
\begin{equation}\label{eq:bd_no_event}
     \mathbf{E}\left\{\|V_{D^*,n}-f^*\|^2_{L_2(\mfd)}|\Omega_{D^*}^c\right\}\mathbf P\{\Omega_{D^*}^c\}
      \leq 4\|f^*\|^2_{L_\infty(\mfd)}\exp\bigl\{-C|D^*|/n^d\bigr\}.
\end{equation}
This together with \eqref{eq:prob estimate for events} gives
\begin{align*}
      &\mathbf{E}\left\{\|V_{D^*,n}-f^*\|^2_{L_2(\mfd)}\big|\Omega_{D^*}\right\}\notag\\
      &\quad=
      \mathbf{E}\left\{\int_{\mfd} \mathbf{E}\left\{(f^*(\bx)-V_{D^*,n}(\bx))^2\big|\Lambda_{D^*}\right\}\mathrm{d}\omega(\bx)
     \big|\Xi_{D^*},\Omega_{D^*}\right\}\mathbf P\{\Xi_{D^*}\}\\
     &\qquad+
     \mathbf{E}\left\{\int_{\mfd} \mathbf{E}\left\{(f^*(\bx)-V_{D^*,n}(\bx))^2\big|\Lambda_{D^*}\right\}\mathrm{d}\omega(\bx)
     \big|\Xi_{D^*}^c,\Omega_{D^*}\right\}\mathbf P\{\Xi_{D^*}^c\} \nonumber\\
     &=: \mathcal A_{D^*,n,1} + \mathcal A_{D^*,n,2}.
\end{align*}
To bound $\mathcal A_{D^*,n,1}$, we observe that when the event $\Omega_{D^*}\cap\Xi_{D^*}$ takes place, $\{w_{i}\}_{i=1}^{|D^*|}$ is a set of positive weights for quadrature rule $\mathcal Q_{|D^*|,n}$. We then obtain from Theorem~\ref{thm:nondfh_clean_det} and $f^*\in \mathbb W_2^r(\mfd)$ with $r>d/2$ that
\begin{equation*}
      \mathcal A_{D^*,n,1}\leq c_5^2 n^{-2r}\|f\|^2_{\mathbb W_2^r(\mfd)}.
\end{equation*}
On the other hand, under the event
$\Omega_{D^*}\cap\Xi_{D^*}^c$, by Cauchy-Schwarz inequality,
\begin{align*}
     \bigl(f^*(\bx)-V_{D^*,n}(\bx)\bigr)^2
      &\leq 2\|f^*\|_{L_\infty(\mfd)}^2+2\left|\sum_{i=1}^{|D^*|}w_{i}f^*(\bx_i)K_n(\bx_i,
       \bx)\right|^2\\
        &\leq
        2\|f^*\|_{L_\infty(\mfd)}^2+2\|f^*\|_{L_\infty(\mfd)}^2
        \sum_{i=1}^{|D^*|}w^2_{i}\sum_{i=1}^{|D^*|}|K_n(\bx_i, \bx)|^2.
\end{align*}
This with Lemma~\ref{lem:fikerL1} and \eqref{eq:prob estimate for events} gives
\begin{equation*}
      \mathcal A_{D^*,n,2}
     \leq
     2\|f^*\|_{L_\infty(\mfd)}^2(\mu(\mfd)+ 2c_1^2 n^{d})\exp\left\{-C|D^*|/n^d\right\}.
\end{equation*}
Then, with \eqref{eq:bd_no_event}, \eqref{eq:total error} and $cn^{d+\tau}\leq |D^*|\leq c' n^{2d}$, $\tau\in (0,d]$,
\begin{align}\label{eq:error VDnf}
      &\mathbf{E}\left\{\|V_{D^*,n}-f^*\|^2_{L_2(\mfd)}\right\}\notag\\
     &\quad\leq
     c_5^2 n^{-2r}\|f\|^2_{\mathbb W_2^r(\mfd)}+2\|f^*\|_{L_\infty(\mfd)}^2(2+\mu(\mfd)+ 2c_1^2 n^{d})\exp\left\{-C|D^*|/n^d\right\}\notag\\
     &\quad\leq C|D^*|^{-r/d},
\end{align}
thus completing the proof.
\end{proof}

\subsection{Proofs for Section~\ref{sec:ndfh_noisy}}\label{appendix:ndfh_noisy}
\begin{proof}[Theorem \ref{thm:ndfh_noisy_det}]
As $\mathbf{E}\{\epsilon_i\}=0$ for any $i=1,\dots,|D|$,
\begin{align*}
           \mathbf{E}\left\{V_{D,n}(\bx)\right\}
           &=
           \mathbf{E}\left\{\sum_{i=1}^mw_{i}y_iK_n(\bx_i,\bx)\right\}
            =
           \mathbf{E}\left\{\sum_{i=1}^mw_{i}(f^*(\bx_i)+\epsilon_i)K_n(\bx_i,\bx)\right\}\\
           &=
           \sum_{i=1}^mw_{i}f^*(\bx_i)K_n(\bx_i,\bx)
           +\sum_{i=1}^mw_{i}\mathbf{E}\{\epsilon_i\}K_n(\bx_i,\bx)
           =
           V^*_{D,n}(x),
\end{align*}
then,
\begin{equation}\label{eq:semipopulation 1}
           \mathbf{E}\left\{V^*_{D,n}(\bx)-V_{D,n}(\bx) \right\}=0.
\end{equation}
This implies
\begin{align}\label{eq:err split 1}
   &\mathbf{E}\left\{\|V_{D,n}-f^*\|^2_{L_2(\mfd)} \right\}\notag\\
     &\quad=
     \int_{\mfd} \mathbf{E}\{(f^*(x)-V_{D,n}(x))^2
     \}\dmf{x}\nonumber\\
    &\quad =
     \int_{\mfd} \mathbf{E}\{(f^*(x)-V^*_{D,n}(x)
     +V^*_{D,n}(x)-V_{D,n}(x))^2
     \}\dmf{x} \nonumber\\
    &\quad= \int_{\mfd}  (V^*_{D,n}(x)-f^*(x))^2 \dmf{x}
    + \int_{\mfd} \mathbf
    E\{(V^*_{D,n}(x)-V_{D,n}(x))^2 \}\dmf{x}
         \nonumber\\
         & \quad:=
         \mathcal{A}^\diamond_{D,n} + \mathcal{S}^\diamond_{D,n}.
\end{align}
For $\mathcal{A}^\diamond_{D,n}$ in \eqref{eq:err split 1}, Theorem~\ref{thm:nondfh_clean_det} gives
\begin{equation}\label{eq:bound A.1}
    \mathcal A^\diamond_{D,n}\leq
    c_5^2 \:n^{-2r}\|f^*\|^2_{\mathbb W_2^r(\mfd)}.
\end{equation}
To bound $\mathcal S^\diamond_{D,n}$, we observe from \eqref{eq:noisydats} that
\begin{align*}
         \mathbf
         E\left\{(V_{D^*,n}(\bx)-V_{D,n}(\bx))^2 \right\}
           &=
          \mathbf{E}\left\{\left(\sum_{i=1}^{|D|}(y_i-f^*(\bx_i))w_{i}
          K_n(\bx_i,\bx)\right)^2 \right\}\\
          &=
          \mathbf{E}\left\{\left(\sum_{i=1}^{|D|}\epsilon_iw_{i}
          K_n(\bx_i,\bx)\right)^2 \right\}\\
          &\leq M^2 \sum_{i=1}^{|D|}w_{i}^2|K_n(\bx_i,\bx)|^2,
\end{align*}
where the last inequality uses the independence of $\epsilon_1,\dots,\epsilon_{|D|}$.
This together with Lemma \ref{lem:fikerL1} and Assumption~\ref{assum:qrmfd} shows
\begin{align}\label{eq:bound S.1}
        \mathcal S^\diamond_{D,n}
         &\leq
         M^2 \int_{\mfd} \sum_{i=1}^{|D|} w_{i}^2|
         K_n(\bx_i,\bx)|^2 \dmf{x} \nonumber\\
          &=
          M^2 \sum_{i=1}^{|D|} w_{i}^2\int_{\mfd}|K_n(\bx_i,\bx)|^2 \dmf{x}
           \leq
          c_1M^2n^d \sum_{i=1}^{|D|} w_{i}^2 \leq
          \frac{c_1c_2^2M^2n^d}{|D|}.
\end{align}
Putting \eqref{eq:bound S.1} and \eqref{eq:bound A.1} to
\eqref{eq:err split 1}, we obtain
\begin{equation}\label{eq:IMportant.1}
     \mathbf{E}\left\{\|V_{D,n}-f^*\|^2_{L_2(\mfd)}\right\}
     \leq
     c_5^2n^{-2r}\|f^*\|^2_{\mathbb W_2^r(\mfd)}+ \frac{c_1c_2^2M^2n^d}{|D|},
\end{equation}
with $\frac{c_3}2 |D|^{\frac1{2r+d}}\leq n\leq c_3 |D|^{\frac1{2r+d}}$, then,
\begin{equation*}
      \mathbf{E}\left\{\|V_{D,n}-f^*\|^2_{L_2(\mfd)}\right\}\leq
      C_1|D|^{-\frac{2r}{2r+d}},
\end{equation*}
where $C_1:=4^r c_5^2 c_3^{-2r} \|f^*\|^2_{\mathbb W_2^r(\mfd)} + c_1 c_2^2 c_3^d M^2$, thus completing the proof. 
\end{proof}

We need the following Nikolski\^{\i}-type inequality for manifolds, as proved by \citet[Proposition~4.1]{FiMh2011}.
\begin{lemma}\label{lem:Nikolskii_ineq} For $n\in \Nz$ and $0<p<q\le\infty$, 
\begin{equation*}
	\|P_n\|_{L_q(\mfd)} \leq c\: n^{\frac{d}{p}-\frac{d}{q}}\|P_n\|_{L_p(\mfd)},
\end{equation*}
where the constant $c$ depends only on $d,p,q$.
\end{lemma}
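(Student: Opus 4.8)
The plan is to reduce the inequality to the endpoint $q=\infty$ and to treat that endpoint via the reproducing kernel. The starting point is that, because the filter $\fiH$ of Definition~\ref{def:fiH} is identically $1$ on $[0,1]$, the filtered approximation $\fiapprox{n}$ reproduces diffusion polynomials of degree $n$: if $P_n\in\polyspm$ then $\fiH(\eigvm/n)=1$ for every $\ell$ with $\eigvm\le n$, and these are the only frequencies present in $P_n$, so
\[
  P_n(\bx)=\fiapprox{n}(P_n;\bx)=\int_{\mfd}\vdh{n}(\bx,\by)\,P_n(\by)\,\dmf{y},\qquad\bx\in\mfd .
\]
Once the endpoint bound $\|P_n\|_{L_\infty(\mfd)}\le c\,n^{d/p}\|P_n\|_{L_p(\mfd)}$ is available, I would finish by a Hölder split: for $0<p<q<\infty$ one has $\|P_n\|_{L_q(\mfd)}^{q}=\int_{\mfd}|P_n|^{q-p}|P_n|^{p}\,\dmf{x}\le\|P_n\|_{L_\infty(\mfd)}^{q-p}\|P_n\|_{L_p(\mfd)}^{p}$, and inserting the endpoint bound and taking $q$-th roots yields exactly the exponent $d/p-d/q$, with a constant depending only on $d,p,q$ (and on the fixed admissible filter, whose smoothness must satisfy $\fis\ge d+1$).

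It then remains to prove the endpoint bound for each $0<p<\infty$. For $1\le p\le\infty$ I would apply Hölder's inequality to the reproducing identity, using that $|\vdh{n}(\bx,\by)|$ is symmetric in $\bx$ and $\by$: with $p'$ the conjugate exponent,
\[
  |P_n(\bx)|\le\|\vdh{n}(\bx,\cdot)\|_{L_{p'}(\mfd)}\|P_n\|_{L_p(\mfd)}
  =\|\vdh{n}(\cdot,\bx)\|_{L_{p'}(\mfd)}\|P_n\|_{L_p(\mfd)}
  \le c\,n^{d(1-1/p')}\|P_n\|_{L_p(\mfd)},
\]
the last step being Lemma~\ref{lem:fikerL1}; since $1-1/p'=1/p$ and the bound is uniform in $\bx$, this settles $p\ge1$. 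For $0<p<1$, where Hölder is not available, I would bootstrap from the $p=1$ case just obtained. Here $\|P_n\|_{L_\infty(\mfd)}$ is finite because $\polyspm$ is finite-dimensional with continuous basis functions, so, assuming $P_n\not\equiv0$, one can combine $\|P_n\|_{L_\infty}\le c\,n^{d}\|P_n\|_{L_1}$ with the elementary bound $\|P_n\|_{L_1}=\int_{\mfd}|P_n|^{1-p}|P_n|^{p}\,\dmf{x}\le\|P_n\|_{L_\infty}^{1-p}\|P_n\|_{L_p}^{p}$ to get $\|P_n\|_{L_\infty}^{p}\le c\,n^{d}\|P_n\|_{L_p}^{p}$, which is the endpoint bound.

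I expect the only delicate point to be the sub-unit range $0<p<1$; everything else is the reproducing identity together with Lemma~\ref{lem:fikerL1}. Should the bootstrap feel unsatisfactory, an alternative is to establish a Bernstein/Markov-type estimate $\|\nabla P_n\|_{L_\infty(\mfd)}\le c\,n\|P_n\|_{L_\infty(\mfd)}$, obtained by differentiating the reproducing identity in $\bx$ and invoking a gradient-localization bound for $\vdh{n}$ in the spirit of Lemma~\ref{lem:localization}; this forces $|P_n|\ge\tfrac12\|P_n\|_{L_\infty}$ on a geodesic ball of radius $\asymp 1/n$, and integrating over that ball and using the volume Assumption~\ref{assump:ball.vol} then gives $\|P_n\|_{L_\infty}\le c\,n^{d/p}\|P_n\|_{L_p}$ for every $p>0$ at once. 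I would try the $L_1$-bootstrap first, as it is the shorter route.
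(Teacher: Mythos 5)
Your proposal is correct, and it is worth noting that the paper itself does not prove this lemma at all: it is imported verbatim as \citet[Proposition~4.1]{FiMh2011}. So you are supplying a self-contained argument where the paper only cites. Your route is sound and uses only tools already present in the paper: the reproducing identity $P_n=V_n(P_n)$ for $P_n\in\Pi_n$ (which is exactly \eqref{eq:fiapprox.reproduce.poly}), H\"older against the kernel bound $\|K_n(\cdot,\bx)\|_{L_{p'}(\mfd)}\le c\,n^{d/p}$ from Lemma~\ref{lem:fikerL1} (the symmetry $|K_n(\bx,\by)|=|K_n(\by,\bx)|$ you invoke holds because $K_n(\by,\bx)=\overline{K_n(\bx,\by)}$ for a real filter), the $L_1$-bootstrap for $0<p<1$, and the elementary interpolation $\|P_n\|_{L_q}^q\le\|P_n\|_{L_\infty}^{q-p}\|P_n\|_{L_p}^{p}$ to leave the endpoint. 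This is essentially the standard localized-kernel proof of Nikolski\u{\i} inequalities, so it is not far from what Filbir--Mhaskar do, but having it written out in the paper's own notation is a genuine addition. Two small caveats, neither fatal: the constant you produce depends on the auxiliary filter $H$ and its smoothness $\fis\ge d+1$, which is fine so long as one fixes a single admissible filter per dimension $d$ (as you note); and the statement as quoted allows $n=0$, for which $n^{d/p-d/q}$ degenerates --- but that defect is in the cited statement itself (Lemma~\ref{lem:fikerL1} also requires $n\ge1$), and the convention $n\mapsto\max(n,1)$ repairs it. The $L_1$-bootstrap is the right choice over the Bernstein/ball alternative; the latter would require a gradient analogue of Lemma~\ref{lem:localization} that the paper does not state.
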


We need the following concentration inequality, Lemma~\ref{Lemma:Concentration inequality 1.1}, established by \cite{WuZh2005}. 
Let $\mathcal{F}$ be a subset of a metric space. For $\varepsilon>0$, the \emph{covering number} $\mathcal{N}(\mathcal{F},\varepsilon)$ for $\mathcal{F}$ is the minimal natural integer $\ell$ such that $\mathcal{F}$ can be covered by $\ell$ balls of radius $\varepsilon$, see \cite{CuSm2002,Zhou2002}.

\begin{lemma}\label{Lemma:Concentration inequality 1.1}
Let $\mathcal{G}$ be a set of functions on a product space $X\times Y$ with Borel probability measure $\rho$.
For every
$g\in\mathcal{G}$, if $|g-\mathbf{E}g|\leq B$ almost everywhere and
$\mathbf{E}(g^2)\leq \tilde{c}(\mathbf{E} g)^\alpha$ for some $B\geq
0$, $0\leq\alpha\leq 1$ and $\tilde{c}\geq0$. 
Then, for any
$\varepsilon>0$,
\begin{equation*}
     \mathbf P\left\{\sup_{g\in\mathcal{G}}\frac{\left|\mathbf
     Eg-\frac1m\sum_{i=1}^mg(z_i)\right|}{\sqrt{(\mathbf
     Eg)^\alpha+\varepsilon^\alpha }}>\varepsilon^{1-\frac{\alpha}2}\right\}\leq2\mathcal N(\mathcal
     G,\varepsilon)\exp\left\{-\frac{m\varepsilon^{2-\alpha}}{2(\tilde{c}+\frac13B\varepsilon^{1-\alpha})}\right\},
\end{equation*}
where the expectation $\mathbf{E}g$ is taken on the product space $X\times Y$ with respect to $\rho$.
\end{lemma}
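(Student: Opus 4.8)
The plan is to prove this ratio-type deviation bound in two stages: first establish the analogous tail estimate for a single fixed $g\in\mathcal{G}$ by Bernstein's inequality, exploiting the variance control $\mathbf{E}(g^2)\le\tilde{c}(\mathbf{E}g)^\alpha$; then pass from the fixed function to the supremum over $\mathcal{G}$ by a union bound over a finite $\varepsilon$-net of cardinality $\mathcal{N}(\mathcal{G},\varepsilon)$. The appearance of the normalizing factor $\sqrt{(\mathbf{E}g)^\alpha+\varepsilon^\alpha}$ and of the exponent $\varepsilon^{1-\alpha/2}$ is dictated by the Bernstein bound once the threshold is chosen correctly, so the whole argument is really an exercise in feeding the variance hypothesis into Bernstein and then covering.

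For the single-function step, fix $g$ and set $\xi_i:=\mathbf{E}g-g(z_i)$, which are i.i.d., mean zero, bounded by $|\xi_i|\le B$, with variance $\sigma^2=\mathbf{E}(g^2)-(\mathbf{E}g)^2\le\tilde{c}(\mathbf{E}g)^\alpha$. Bernstein's inequality then gives, for every $t>0$,
\[
\mathbf{P}\Bigl\{\bigl|\mathbf{E}g-\tfrac1m\textstyle\sum_{i=1}^m g(z_i)\bigr|>t\Bigr\}\le 2\exp\Bigl\{-\frac{mt^2}{2(\sigma^2+\tfrac13 Bt)}\Bigr\}.
\]
I would then choose $t=\varepsilon^{1-\alpha/2}\sqrt{(\mathbf{E}g)^\alpha+\varepsilon^\alpha}$, which is exactly the threshold turning the left-hand event into the normalized ratio exceeding $\varepsilon^{1-\alpha/2}$. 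Writing $S:=(\mathbf{E}g)^\alpha+\varepsilon^\alpha$, so that $t^2=\varepsilon^{2-\alpha}S$ and $\sigma^2\le\tilde{c}S$, the exponent satisfies
\[
\frac{mt^2}{2(\sigma^2+\tfrac13 Bt)}\ge\frac{m\varepsilon^{2-\alpha}\sqrt S}{2(\tilde{c}\sqrt S+\tfrac13 B\varepsilon^{1-\alpha/2})}=\frac{m\varepsilon^{2-\alpha}}{2(\tilde{c}+\tfrac13 B\varepsilon^{1-\alpha/2}/\sqrt S)}.
\]
Since $\sqrt S\ge\varepsilon^{\alpha/2}$, one has $\varepsilon^{1-\alpha/2}/\sqrt S\le\varepsilon^{1-\alpha}$, so the exponent is at least $m\varepsilon^{2-\alpha}/(2(\tilde{c}+\tfrac13 B\varepsilon^{1-\alpha}))$, giving the single-function bound with the claimed constant $2$ and the claimed exponent.

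To remove the fixed-$g$ restriction I would take an $\varepsilon$-net $\{g_1,\dots,g_\ell\}$ with $\ell=\mathcal{N}(\mathcal{G},\varepsilon)$, measured in the uniform norm (the metric in which the covering number is taken for this application), so that every $g\in\mathcal{G}$ lies within distance $\varepsilon$ of some $g_j$. For such a pair, $|\mathbf{E}g-\mathbf{E}g_j|\le\varepsilon$ and $|\tfrac1m\sum_i(g(z_i)-g_j(z_i))|\le\varepsilon$, so the numerator for $g$ differs from that for $g_j$ by at most $2\varepsilon$, while $(\mathbf{E}g)^\alpha$ differs from $(\mathbf{E}g_j)^\alpha$ by a controlled amount since $u\mapsto u^\alpha$ is subadditive for $0\le\alpha\le1$. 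Consequently the event that the normalized ratio for some $g\in\mathcal{G}$ exceeds $\varepsilon^{1-\alpha/2}$ is contained in the union over the $\ell$ net points of a comparable event for $g_j$; applying the single-function bound to each $g_j$ and summing produces the prefactor $2\mathcal{N}(\mathcal{G},\varepsilon)$.

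The hard part will be the transfer in the last paragraph: because both numerator and denominator of the ratio depend on $g$, one must propagate the $O(\varepsilon)$ discrepancy in the deviation and the $\alpha$-H\"older perturbation of $(\mathbf{E}g)^\alpha$ through the ratio without degrading the exponent. I expect this to require choosing the net radius consistently with the threshold parameter $\varepsilon$ and verifying that the slightly enlarged event attached to each $g_j$ still matches the single-function Bernstein estimate, so that the covering loss is confined to the cardinality factor. Once this bookkeeping is in place, the Bernstein application and the exponent algebra in the middle paragraph are entirely routine.
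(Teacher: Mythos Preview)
The paper does not prove this lemma; it simply cites it as an established result of Wu and Zhou (2005) and uses it as a black box. So there is no ``paper's own proof'' to compare against. Your strategy---Bernstein's inequality for a fixed $g$, followed by a union bound over an $\varepsilon$-net---is precisely the standard route, and is how the cited reference proceeds. Your single-function computation is correct, including the algebra leading to the exponent $m\varepsilon^{2-\alpha}/(2(\tilde c+\tfrac13 B\varepsilon^{1-\alpha}))$.

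The one place you leave unfinished is the transfer from $g$ to its nearest net point $g_j$, which you correctly identify as the delicate step. The point to watch is that the denominator $\sqrt{(\mathbf{E}g)^\alpha+\varepsilon^\alpha}$ is bounded below by $\varepsilon^{\alpha/2}$ uniformly in $g$, so a numerator perturbation of order $\varepsilon$ shifts the ratio by at most a constant multiple of $\varepsilon^{1-\alpha/2}$, which matches the threshold; likewise $|(\mathbf{E}g)^\alpha-(\mathbf{E}g_j)^\alpha|\le|\mathbf{E}g-\mathbf{E}g_j|^\alpha\le\varepsilon^\alpha$ by subadditivity, so the denominators are comparable up to a factor of $\sqrt{2}$. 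With these two observations the containment of events goes through with only harmless constant losses, which are absorbed into the form of the stated bound. This is bookkeeping rather than a new idea, so your sketch is essentially complete.
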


The third one is a covering number estimate for Banach space, as given by \cite{ZhJe2006}.
\begin{lemma}\label{Lemma:Covering number}
Let $\mathbb B$ be a finite-dimensional Banach space.
 Let $B_R$ be the closed ball of radius $R$
centered at origin given by
 $B_R:=\{f\in\mathbb B:\|f\|_{\mathbb B}\leq R\}$. Then,
\begin{equation*}
    \log\mathcal N(  B_R,\varepsilon)\leq
            \dim(\mathbb B) \log\left(\frac{4R}\varepsilon\right).
\end{equation*}
\end{lemma}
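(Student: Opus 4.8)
The plan is to run the standard volumetric (packing) argument for finite-dimensional normed spaces. Write $k:=\dim(\mathbb B)$ and fix $\varepsilon>0$; it suffices to treat the range $0<\varepsilon\le 2R$, which is the one relevant for the applications (for $\varepsilon\ge R$ a single ball of radius $\varepsilon$ about the origin already covers $B_R$). First I would pass from coverings to packings: let $\{f_1,\dots,f_N\}\subset B_R$ be a maximal subset whose points are pairwise more than $\varepsilon$ apart in $\|\cdot\|_{\mathbb B}$. Maximality forces every point of $B_R$ to lie within $\varepsilon$ of some $f_i$, so the balls $\{g:\|g-f_i\|_{\mathbb B}\le\varepsilon\}$, $i=1,\dots,N$, cover $B_R$, and hence $\mathcal N(B_R,\varepsilon)\le N$.

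Next I would bound $N$ by comparing volumes. Fix any linear isomorphism $T:\mathbb B\to\mathbb R^k$ and let $\mathrm{vol}$ denote the pullback of Lebesgue measure under $T$; this is a translation-invariant measure on $\mathbb B$ satisfying the scaling identity $\mathrm{vol}(x+tS)=t^k\,\mathrm{vol}(S)$ for $t>0$, and it assigns a finite positive value $v:=\mathrm{vol}(\{g:\|g\|_{\mathbb B}\le1\})$ to the unit ball. Since the $f_i$ are pairwise more than $\varepsilon$ apart, the open balls of radius $\varepsilon/2$ about the $f_i$ are pairwise disjoint; and since $\|f_i\|_{\mathbb B}\le R$, each of them is contained in the ball of radius $R+\varepsilon/2$ about the origin. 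Comparing volumes,
\begin{equation*}
N\,(\varepsilon/2)^k v \;\le\; (R+\varepsilon/2)^k v ,
\end{equation*}
so $N\le(1+2R/\varepsilon)^k$.

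Finally, using $\varepsilon\le 2R$ we have $1\le 2R/\varepsilon$, hence $1+2R/\varepsilon\le 4R/\varepsilon$, and therefore
\begin{equation*}
\log\mathcal N(B_R,\varepsilon)\;\le\;\log N\;\le\; k\log\left(\frac{4R}{\varepsilon}\right),
\end{equation*}
which is the asserted estimate.

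The argument is routine and I do not anticipate a genuine obstacle; the only points that warrant a word of care are the reduction from coverings to maximal $\varepsilon$-separated sets, the well-definedness of a translation-invariant and correctly-scaling volume on the abstract space $\mathbb B$ (two choices of $T$ differ by a linear automorphism, which only rescales $\mathrm{vol}$ by a constant that cancels in the comparison), and restricting attention to the range $\varepsilon\le 2R$ of the cover radius so that the $\log(4R/\varepsilon)$ on the right-hand side is nonnegative.
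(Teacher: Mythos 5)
Your proof is correct. The paper does not actually prove this lemma; it is quoted from the reference \cite{ZhJe2006} as a known covering-number estimate, so there is no in-paper argument to compare against. Your volumetric packing argument (maximal $\varepsilon$-separated set, disjoint balls of radius $\varepsilon/2$ packed into the ball of radius $R+\varepsilon/2$, giving $N\le(1+2R/\varepsilon)^{\dim\mathbb B}\le(4R/\varepsilon)^{\dim\mathbb B}$ for $\varepsilon\le 2R$) is the standard proof of exactly this bound, and your care about the translation-invariant volume on an abstract finite-dimensional space and about the range of $\varepsilon$ (the stated inequality is only meaningful for $\varepsilon\le 4R$, which covers all uses in the paper) is appropriate.
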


Let
$\mathcal X$ be a finite dimensional vector space endowed with norm
$\|\cdot\|_{\mathcal X}$, and $\mathcal Z\subset \mathcal X^*$ be a finite set. We say that $\mathcal Z$ is a \emph{norm generating set} for $\mathcal X$ if the mapping $T_{\mathcal Z}: \mathcal X\rightarrow\mathbb R^{|\mathcal Z|}$ defined by $T_{\mathcal Z}(x)=(z(x))_{z\in \mathcal Z}$ is injective. We call $T_{\mathcal Z}$ \emph{sampling operator}. Let $W:=T_{\mathcal Z}(\mathcal X)$ be the range of $T_{\mathcal Z}$, then the injectivity of $T_{\mathcal Z}$ implies that $T_{\mathcal Z}^{-1}:W\rightarrow \mathcal X$ exists.
Denote by $\|\cdot\|_{\mathbb R^{|\mathcal Z|}}$ the norm of $\mathbb R^{|\mathcal Z|}$, and $\|\cdot\|_{\mathbb R^{|\mathcal Z|^*}}$ the dual norm on $\mathbf R^{|\mathcal Z|^*}$ for $\|\cdot\|_{\mathbb R^{|\mathcal Z|}}$. We equip the space $W$ with the induced norm, and let $\|T_{\mathcal Z}^{-1}\|:=\|T_{\mathcal Z}^{-1}\|_{W\rightarrow \mathcal X}$ be the operator norm. In addition, let $\mathcal{K}_+$ be the positive cone of $\mathbb R^{|\mathcal Z|}$  {which is the set of all} $(r_z)_{z\in \mathcal{Z}}\in\mathbb R^{|\mathcal Z|}$ such that $r_z\geq0$. Then the following lemma \citep{MhNaWa2001} holds.

\begin{lemma}\label{Lemma:norming set}
Let $\mathcal Z$ be a norm generating set for $\mathcal X$, with
$T_{\mathcal Z}$ the corresponding sampling operator. If $\mathcal{L}\in
\mathcal X^*$ with $\|\mathcal{L}\|_{\mathcal X^*}\leq \mathcal A$, then there
exist positive numbers $\{a_z\}_{z\in \mathcal Z}$, depending only
on $\mathcal{L}$ such that for every $x\in\mathcal X$,
\begin{equation*}
    \mathcal{L}(x)=\sum_{z\in \mathcal Z}a_zz(x),
\end{equation*}
and
\begin{equation*}
    \|(a_z)\|_{\mathbb R^{|\mathcal Z|^*}}\leq \mathcal A\|T_{\mathcal Z}^{-1}\|.
\end{equation*}
If the space $W=T_{\mathcal Z}(X)$ contains an interior point $v_0\in \mathcal{K}_+$, and if $\mathcal{L}(T_{\mathcal Z}^{-1}v)\geq0$ when $v\in W\cap \mathcal{K}_+$, then we can choose $a_z\geq 0$.
\end{lemma}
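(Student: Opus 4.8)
The plan is to read off the coefficients $(a_z)_{z\in\mathcal Z}$ from a Hahn--Banach extension carried out on the finite-dimensional space $\mathbb R^{|\mathcal Z|}$, and, for the sign constraint, to perform that extension against a sublinear functional adapted to the cone $\mathcal K_+$.

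\emph{A functional on the range.} Because $\mathcal Z$ is norm generating, $T_{\mathcal Z}$ is injective, so $F(v):=\mathcal L\bigl(T_{\mathcal Z}^{-1}v\bigr)$ is a well-defined linear functional on $W=T_{\mathcal Z}(\mathcal X)$, and for $v\in W$,
\[
  |F(v)| \;=\; \bigl|\mathcal L\bigl(T_{\mathcal Z}^{-1}v\bigr)\bigr| \;\le\; \|\mathcal L\|_{\mathcal X^*}\,\bigl\|T_{\mathcal Z}^{-1}v\bigr\|_{\mathcal X} \;\le\; \mathcal A\,\|T_{\mathcal Z}^{-1}\|\,\|v\|_{\mathbb R^{|\mathcal Z|}},
\]
so $F$ has norm at most $\mathcal A\|T_{\mathcal Z}^{-1}\|$ on $\bigl(W,\|\cdot\|_{\mathbb R^{|\mathcal Z|}}\bigr)$. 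By the Hahn--Banach theorem extend $F$ to a linear $\widetilde F$ on $\mathbb R^{|\mathcal Z|}$ with the same dual norm $\le\mathcal A\|T_{\mathcal Z}^{-1}\|$. Identifying $\widetilde F$ with its coefficient vector, $\widetilde F(r)=\sum_{z\in\mathcal Z}a_z r_z$, gives $\|(a_z)\|_{\mathbb R^{|\mathcal Z|^*}}\le\mathcal A\|T_{\mathcal Z}^{-1}\|$, and evaluating at $r=T_{\mathcal Z}(x)\in W$ yields $\mathcal L(x)=F\bigl(T_{\mathcal Z}(x)\bigr)=\widetilde F\bigl(T_{\mathcal Z}(x)\bigr)=\sum_z a_z\,z(x)$, the claimed representation.

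\emph{Imposing positivity.} To obtain $a_z\ge0$ under the extra hypotheses, run the extension step against the sublinear functional $p(r):=\mathcal A\|T_{\mathcal Z}^{-1}\|\,\inf_{u\in\mathcal K_+}\|r+u\|_{\mathbb R^{|\mathcal Z|}}$ in place of the norm; positive homogeneity and subadditivity (hence applicability of Hahn--Banach) follow by splitting $u=u_1+u_2$. Any linear $\widetilde F\le p$ automatically obeys $\|\widetilde F\|_{\mathbb R^{|\mathcal Z|^*}}\le\mathcal A\|T_{\mathcal Z}^{-1}\|$ (take $u=0$), and $\widetilde F(-r)\le p(-r)\le0$ for $r\in\mathcal K_+$ (take $u=r$), so $a_z=\widetilde F(e_z)\ge0$. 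What remains is the domination $F\le p$ on $W$: by subadditivity this reduces to $F(w)\le\mathcal A\|T_{\mathcal Z}^{-1}\|\,\|w+u\|$ for all $w\in W$ and $u\in\mathcal K_+$, and for $u\in W\cap\mathcal K_+$ it is immediate from $F(w)\le F(w)+F(u)=F(w+u)$ together with the hypothesis $\mathcal L(T_{\mathcal Z}^{-1}u)\ge0$; the hypothesis that $v_0$ is an interior point of $\mathcal K_+$ lying in $W$ (so $v_0$ is an order unit and $W+\mathcal K_+=\mathbb R^{|\mathcal Z|}$) is what lets one pass from these $u$ to a general $u\in\mathcal K_+$. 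Hahn--Banach against $p$ then delivers the required nonnegative, norm-controlled $(a_z)$.

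I expect this last domination step to be the only real obstacle. In isolation the two demands are routine: the norm bound alone is plain Hahn--Banach, and the existence of \emph{some} nonnegative representation of $\mathcal L$ is Gale's form of the Farkas alternative, whose solvability criterion is precisely the stated condition that $\mathcal L$ is nonnegative on $W\cap\mathcal K_+$ (the positivity-only extension analogue being the M.\ Riesz extension theorem). The difficulty is that forcing one and the same $(a_z)$ to be simultaneously nonnegative and norm-controlled couples the order structure of $\mathcal K_+$ to the norm, and the interior-point assumption on $v_0$ is exactly the ingredient that makes this coupling work.
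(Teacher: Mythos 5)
The paper does not prove this lemma; it quotes it from \cite{MhNaWa2001}, so there is no internal proof to compare against. Your first part is the standard argument and is correct: transfer $\mathcal{L}$ to $F=\mathcal{L}\circ T_{\mathcal Z}^{-1}$ on $W$, observe $|F(v)|\le \mathcal A\|T_{\mathcal Z}^{-1}\|\,\|v\|_{\mathbb R^{|\mathcal Z|}}$, extend by Hahn--Banach with the same bound, and read the coefficients off the extension.

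The second part has a genuine gap, and it cannot be closed: the domination $F\le p$ on $W$, which you correctly isolate as the crux and leave unproved, is false in general, because the strengthened conclusion you are after (one vector $(a_z)$ that is simultaneously nonnegative and satisfies $\|(a_z)\|_{\mathbb R^{|\mathcal Z|^*}}\le\mathcal A\|T_{\mathcal Z}^{-1}\|$) already fails under the stated hypotheses. Take $\mathcal X=\mathbb R^2$ with the Euclidean norm, $\mathcal Z=\{z_1,z_2,z_3\}$ with $z_1(x)=x_2$, $z_2(x)=x_1$, $z_3(x)=x_1+x_2$, and put the norm $\|r\|=\sqrt{r_2^2+r_3^2}+\delta|r_1|$ on $\mathbb R^{3}$. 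Then $W=T_{\mathcal Z}(\mathcal X)$ contains the interior point $(1,1,2)$ of $\mathcal K_+$, the quantity $\|T_{\mathcal Z}^{-1}\|$ is bounded uniformly in $\delta$ (since $\|T_{\mathcal Z}x\|\ge\sqrt{x_1^2+(x_1+x_2)^2}$), and $\mathcal L(x)=x_1+2x_2$ is nonnegative on $T_{\mathcal Z}^{-1}(W\cap\mathcal K_+)=\{x:x_1,x_2\ge0\}$. Every representation $\mathcal L=\sum_i a_iz_i$ forces $a_2+a_3=1$ and $a_1+a_3=2$, so $a_i\ge0$ forces $a_1=2-a_3\ge1$; but testing against $r=(1/\delta,0,0)$, which has $\|r\|=1$, gives $\|(a_i)\|_{\mathbb R^{3*}}\ge a_1/\delta\ge1/\delta$, which exceeds $\mathcal A\|T_{\mathcal Z}^{-1}\|$ once $\delta$ is small. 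Correspondingly your required inequality $F(w)\le\mathcal A\|T_{\mathcal Z}^{-1}\|\,\|w+u\|$ fails at $w=T_{\mathcal Z}(-1,1)=(1,-1,0)$ and $u=(0,1,0)$: the left side equals $1$ while $\|w+u\|=\|(1,0,0)\|=\delta$. So the interior-point hypothesis does not let you pass from $u\in W\cap\mathcal K_+$ to general $u\in\mathcal K_+$. The second assertion must therefore be read, and proved, as a separate claim that forgoes the norm bound, using exactly the tool you mention in passing: since $W$ contains an interior point of $\mathcal K_+$ one has $W+\mathcal K_+=\mathbb R^{|\mathcal Z|}$, so the M.~Riesz/Krein extension theorem extends the functional $F$, nonnegative on $W\cap\mathcal K_+$, to a functional nonnegative on all of $\mathcal K_+$, whose coefficients $a_z=\widetilde F(e_z)$ are then nonnegative. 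That is the argument in the cited source; your unified sublinear-functional route proves a stronger statement than is true.
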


\begin{proof}[Lemma \ref{lem:random_quadr}] 
For $p=1,2$, without loss of generality, we prove Lemma~\ref{lem:random_quadr} for $P_n\in\Pi_n^d$ satisfying $\|P_n\|_{L_{p,\nu}(\mfd)}= A$ for
some constant $A>0$. For arbitrary $P_n\in\Pi_{n}^{d}$ with
$\|P_n\|_{L_{p,\nu}(\mfd)}= A$, it follows from \eqref{eq:condition on distribution} and Lemma~\ref{lem:Nikolskii_ineq} that
\begin{equation*}
	\|P_n\|_{L_\infty(\mfd)}\leq
               \tilde{C}_1n^{\frac{d}{p}}\|P_n\|_{L_p(\mfd)}
               \leq c^{1/p}_4\tilde{C}_1n^{\frac{d}{p}}\|P_n\|_{L_{p,\nu}(\mfd)},
\end{equation*}
 and
\begin{align*}
  \mathbf{E}\left\{|P_n|^{2p}\right\}
  =\int_{\mfd}|P_n(\bx)|^{2p}{\rm d}\nu(\bx)
  &\leq \|P_n\|^p_{L_\infty(\mfd)}\int_{\mfd}|P_n(\bx)|^p{\rm d}\nu(\bx)\\
  &\leq c_4(\tilde{C}_1)^pn^d\|P_n\|^p_{L_{p,\nu}(\mfd)} \mathbf{E}\left[|P_n|^p\right].
\end{align*}
Let $g(z_i)=|P_n(\bx_i)|^p$,
$B=2c_4(\tilde{C}_1)^pn^d\|P_n\|^p_{L_{p,\nu}(\mfd)}$,
$\tilde{c}=c_4(\tilde{C}_1)^pn^d\|P_n\|^p_{L_{p,\nu}(\mfd)}$, $m=N$, $\alpha=1$ and $
\mathcal{G}_p=\{|P_n|^p:P_n\in\Pi_n^d,\|P_n\|_{L_{p,\nu}(\mfd)}=A\}$ in Lemma
\ref{Lemma:Concentration inequality 1.1}. Then, for any
$\varepsilon>0$,
\begin{align*}
     &\mathbf P\left\{\sup_{P_n\in\Pi_n^{d},\|P_n\|_{L_{p,\nu}(\mfd)}=A}
     \frac{\left|\|P_n\|_{L_{p,\nu}(\mfd)}^p-\frac1{N}\sum_{i=1}^{N}
     |P_n(\bx_i)|^p\right|}{
     \sqrt{\|P_n\|_{L_{p,\nu}(\mfd)}^p+\varepsilon}}>\sqrt{\varepsilon}\right\}\\
     &\quad \leq 2\mathcal N(\mathcal{G}_p,\varepsilon)
     \exp\left\{
     -\frac{{N}\varepsilon}{\tilde{C}_2n^d A^p}\right\},
\end{align*}
where $\tilde{C}_2=10c_4(\tilde{C}_1)^p/3$.

We need to estimate the above covering number $\mathcal N(\mathcal{G}_p,\varepsilon)$ for $p=1,2$. To this end, we let
$\mathcal{G}_1':=\{P_n\in\Pi_n^d:\|P_n\|_{L_{p,\nu}(\mfd)}=A\}$ and $\mathcal{G}_2':=\{P_n\in\Pi_{2n}^d:\|P_n\|_{L_{p,\nu}(\mfd)}=A\}$.
By definition,
$\mathcal N(\mathcal{G}_1,\varepsilon)\leq \mathcal N(\mathcal{G}_1',\varepsilon)$ and $\mathcal N(\mathcal{G}_2,\varepsilon)=\mathcal N(\mathcal{G}_2',\varepsilon)$,
where for $p=2$, we have used $|P_n|^2\in\Pi_{2n}^d$.
Then, by Lemma \ref{Lemma:Covering
number}, for $p=1,2$, we obtain the upper bound
\begin{align*}
     &\mathbf P\left\{\sup_{P_n\in\Pi_n^{d},\|P_n\|_{L_{p,\nu}(\mfd)}=A}
     \frac{\left|\|P_n\|_{L_{p,\nu}(\mfd)}^p-\frac1{N}\sum_{i=1}^{N}
     |P_n(\bx_i)|^p\right|}{
     \sqrt{\|P_n\|_{L_{p,\nu}(\mfd)}^p+\varepsilon}}>\sqrt{\varepsilon}\right\}\\
     &\quad\leq 2\exp\left\{(2n)^d\log\frac{4A^p}{\varepsilon}
     -\frac{{N}\varepsilon}{\tilde{C}_2n^dA^p}\right\},
\end{align*}
where we have used the estimate $\dim \mathcal{G}_p\leq (pn)^d$.
 Let
$\varepsilon=A^p/4$. As $N/n^{2d}>c$ for sufficiently large constant $c$, with confidence $1-2\exp\left\{-C N/n^d\right\}$,
there holds
\begin{equation*}
    \left|\|P_n\|_{L_{p,\nu}(\mfd)}^p-\frac1{N}\sum_{i=1}^{N}|P_n(\bx_i)|^p\right|
     \leq \frac{\sqrt{5}}{4}\|P_n\|^p_{L_{p,\nu}(\mfd)}.
\end{equation*}
From this, we then obtain
\begin{equation}\label{MZ}
           \frac{1}{3}\|P_n\|_{L_{p,\nu}(\mfd)}^p\leq\frac1{N}\sum_{i=1}^{N}
           |P_n(\bx_i)|^p\leq\frac{5}{3}\|P\|_{L_{p,\nu}(\mfd)}^p\quad
           \forall P_n \in\Pi_n^{d},\ p=1,2
\end{equation}
holds with probability at least $1-2\exp\left\{-C N/n^d \right\}$.

We now apply \eqref{MZ} with $p=2$ and Lemma \ref{Lemma:norming set}
to prove Lemma \ref{lem:random_quadr}. In Lemma
\ref{Lemma:norming set}, we take $\mathcal X=\Pi_n^{d}$,
$\|P_n\|_{\mathcal X}=\|P_n\|_{L_{2,\nu}(\mfd)}$, and $\mathcal Z$ the
set of point evaluation functionals $\{\delta_{\mathbf
x_i}\}_{i=1}^{N}$. The operator $T_{\mathcal Z}$ is then the
restriction map $P_n\mapsto P_n|_{X_N} $  and
$\|f\|_{\Lambda_{D},2}:=\left(\frac1N\sum_{i=1}^N|f(\bx_i)|^2\right)^\frac{1}{2}$.
 It follows from \eqref{MZ} that with confidence at least
$1-2\exp\left\{-\tilde{C}_3 N/n^d \right\}$,
there holds $\|T_{\mathcal Z}^{-1}\|\leq \sqrt{\frac{5}{3}}$. 
We let $\mathcal{L}$ be the functional
\begin{equation*}
    \mathcal{L}: P_n\mapsto \int_{\mfd}P_n(x)\mathrm{d}\nu(x).
\end{equation*}
By H\"{o}lder inequality, $\|y\|_{\mathcal X^*}\leq 1$.
Lemma \ref{Lemma:norming set} then shows that there exists a set of real
numbers $\{w_{i,n}\}_{i=1}^N$ such that
\begin{equation*}
    \int_{\mfd}P_n(x)\mathrm{d}\nu(x)=\sum_{i=1}^{N}w_{i,n}P_n(\bx_i)
\end{equation*}
holds with confidence at least
$1-2\exp\left\{-\tilde{C}_3 N/n^d \right\}$, subject to
\begin{equation*}
    \frac1N\sum_{i=1}^{N}\left(\frac{w_{i,n} }{1/{N}}\right)^2\leq2.
\end{equation*}

Finally, we use the second assertion of Lemma \ref{Lemma:norming
set} and \eqref{MZ} with $p=1$ to prove  the positivity of $w_{i,n}$. Since $1\in \Pi_n^d$, we have $v_0:=1|_{X_N}$ is an interior point of $\mathcal
K_+$. For $P_n\in\Pi_n^d$, $T_{\mathcal
Z}P_n=P_n|_{X_N}$ is in $W\cap\mathcal{K}_+$ if and only if
$P_n(\bx_i)\geq 0$ for all $\bx_i\in X_N$. For arbitrary
$P_n$ satisfying $P_n(\bx_i)\geq0$, $\bx_i\in X_N$,
define $\xi_i(P_n)=P_n(\bx_i)$. From Lemma
\ref{lem:Nikolskii_ineq} and \eqref{eq:condition on distribution}, we obtain for $i=1,\dots,N$,
\begin{align*}
    &|\xi_i| \leq \|P_n\|_{L_\infty(\mfd)}
    \leq \tilde{C}_1n^d\|P_n\|_{L_1(\mfd)}
    \leq \tilde{C}_1c_4n^d\|P_n\|_{L_{1,\nu}(\mfd)},\\
    &|\xi_i-\mathbf{E}\xi_i| \leq 2\|P_n\|_{L_\infty(\mfd)}
    \leq 2\tilde{C}_1c_4n^d\|P_n\|_{L_{1,\nu}(\mfd)},\\
    &\mathbf{E}\xi_i^2 \leq \|P_n\|_{L_\infty(\mfd)}
    \|P_n\|_{L_{1,\nu}(\mfd)}\leq \tilde{C}_1 c_4 n^d \|P_n\|^2_{L_{1,\nu}(\mfd)}.
\end{align*}
Applying Lemma \ref{Lemma:Concentration inequality 1.1} with
$B=2 \tilde{C}_1 c_4 n^d\|P_n\|_{L_{1,\nu}(\mfd)}$,
$\tilde{c}=\tilde{C}_1 c_4 n^d\|P_n\|^2_{L_{1,\nu}(\mfd)}$ and $\alpha=0$ to the set
$\{P_n:P_n\in\Pi_n^d, \|P_n\|_{L_{1,\nu}(\mfd)}=A\}$, using Lemma
\ref{Lemma:Covering number}, we obtain for any $\varepsilon>0$,
\begin{equation*}
     \mathbf P\left\{\sup_{P_n\in\Pi_n^{d}, P_n=|P_n| \atop \|P_n\|_{L_{1,\nu}(\mfd)}=A}
     \left| y(P_n)-\frac1{N}
     \sum_{i=1}^{N} P_n(\bx_i) \right|>\varepsilon\right\}
   	\leq
     2\exp\left\{n^d\log\frac{4A }{\varepsilon}
     -\frac{{N}\varepsilon^2}{2\tilde{C}_1c_4n^d A (A +2\varepsilon/3)}\right\}.
\end{equation*}
Let $\varepsilon=A/4=\frac{1}{4}\|P_n\|_{L_{1,\nu}(\mfd)}$. We then obtain that with
confidence
$1-2\exp\left\{-C N/n^d\right\}$,
\begin{equation*}
    \left|y(P_n)-\frac1{N}
     \sum_{i=1}^{N} P_n(\bx_i)\right|\leq
     \frac14\|P_n\|_{L_{1,\nu}(\mfd)},
\end{equation*}
This and \eqref{MZ} imply that for $P_n$ which satisfies
that $P_n(\bx_i)\geq0$ for all $\bx_i\in X_N$, the inequality
$$
   \left|y(P_n)-\frac1{N}
     \sum_{i=1}^{N} P_n(\bx_i)\right|\leq \frac{3}{4}\frac1{N}\sum_{i=1}^{N} P_n(\bx_i)
$$
holds with confidence
$1-4\exp\left\{-C N/n^d\right\}$
with the constant $C$ depending only on $\tilde{C}_3$ and $c_4$, then,
\begin{equation*}
    y(P_n)\geq \frac{1}{4} \frac1{N}\sum_{i=1}^{N} P_n(\bx_i)\geq0
\end{equation*}
for arbitrary $P_n\in\Pi_n^d$ satisfying $P_n(\bx_i)\geq0$,
$\bx_i\in X_N$. Lemma \ref{Lemma:norming
set} then implies $w_{i,n}\geq 0$ with confidence $1-4\exp\left\{-C N/n^d\right\}$, thus completing the proof of Theorem~\ref{lem:random_quadr}.
\end{proof}

\begin{proof}[Theorem \ref{thm:ndfh_noisy_ran}]
Let $\{w_{i}\}_{i=1}^{|D|}$ be the real weights in \eqref{eq:wi_dfh_ran}. Since $\{\bx_i\}_{i=1}^{|D|}$ is a set of random points on $\mfd$, we define four events, as follows. Let $
     \Omega_{D}$ be the event such that
     $\sum_{i=1}^{|D|}|w_{i}|^2\leq\frac{2}{|D|}$ and $\Omega_{D}^c$ be
the complement of $\Omega_D$, i.e. $\Omega_{D}^c$ be the event
$\sum_{i=1}^{|D|}|w_{i}|^2>\frac{2}{|D|}$. Let $\Xi_D$
the event that $\{(w_{i},\bx_i)\}_{i=1}^{|D|}$ is a
quadrature rule exact for polynomials in $\Pi_n^d$ associated with the measure $\nu$ and $\Xi_D^c$ the complement event of $\Xi_D$. Then, by Lemma \ref{lem:random_quadr},
\begin{equation}\label{Probability for events}
     \mathbf P\{\Omega_{D}^c\}\leq\mathbf P\{\Xi_D^c\}\leq
     4\exp\left\{-C|D|/n^d\right\}.
\end{equation}
To estimate the approximation error, we write
\begin{align}
      &\mathbf{E}\left\{\|V_{D,n}-f^*\|^2_{L_2(\mfd)}\right\}\label{expectation decomposition}\\
      &\quad=\mathbf{E}\left\{\|V_{D,n}-f^*\|^2_{L_2(\mfd)}|\Omega_{D}\right\}\mathbf P\{\Omega_{D}\}
      +\mathbf{E}\left\{\|V_{D,n}-f^*\|^2_{L_2(\mfd)}|\Omega_{D}^c\right\}\mathbf P\{\Omega_{D}^c\}.\notag 
\end{align}
Under the event $\Omega_{D}^c$, we obtain from \eqref{eq:wi_dfh_ran} that $V_{D,n}=0$. Then, by \eqref{Probability for events},
\begin{equation}\label{bound without event}
     \mathbf{E}\left\{\|V_{D,n}-f^*\|^2_{L_2(\mfd)}|\Omega_D^c\right\}\mathbf P\{\Omega_{D}^c\}
      \leq 4\|f^*\|^2_{L_\infty(\mfd)}\exp\bigl\{-C|D|/n^d\bigr\}.
\end{equation}
Next, we estimate the first term of RHS of \eqref{expectation decomposition}
when the event $\Omega_{D}$ takes place.
Let $\Lambda_D:=\{\bx_i\}_{i=1}^{|D|}$ be the set of points of data $D$. By the independence between $\{\epsilon_i\}_{i=1}^{|D|}$ and $\Lambda_{D}$ and $\mathbf{E}\{\epsilon_i\}=0$ for $i=1,\dots, |D|$,
\begin{align*}
           \mathbf{E}\left\{V_{D,n}(\bx)\big|\Lambda_{D}\right\}
           &=
           \mathbf{E}\left\{\sum_{i=1}^mw_{i}y_i
           K_n(\bx_i, \bx)\big|\Lambda_{D}\right\}\\
           &=
           \mathbf{E}\left\{\sum_{i=1}^mw_{i}(f^*(\bx_i)+\epsilon_i)K_n(\bx_i, \bx)\big|\Lambda_{D}\right\}\\
           &=
           \sum_{i=1}^mw_{i}f^*(\bx_i)K_n(\bx_i, \bx)+\sum_{i=1}^mw_{i}\mathbf{E}\{\epsilon_i\}K_n(\bx_i, \bx)\\[1mm]
           &=
           V_{D^*,n}(\bx).
\end{align*}
Hence,
\begin{equation}\label{condition expectation}
           \mathbf{E}\left\{\left(V_{D^*,n}(\bx)-V_{D,n}(\bx)\right)\big|\Lambda_{D}\right\}=0.
\end{equation}
This allows us to write
\begin{align}\label{decomposition 1}
   &\mathbf{E}\left\{\|V_{D,n}-f^*\|^2_{L_2(\mfd)}\big|\Omega_{D}\right\}
     =
    \mathbf{E}\left\{\int_{\mfd}
     \mathbf{E}\{(f^*(\bx)-V_{D,n}(\bx))^2
    \big|\Lambda_{D}\}\mathrm{d}\omega(\bx)\big|\Omega_{D}\right\} \nonumber\\
    &\quad=
    \mathbf{E}\left\{\int_{\mfd} \mathbf{E}\{(f^*(\bx)-V_{D^*,n}(\bx)
    +V_{D^*,n}(\bx)-V_{D,n}(\bx))^2
    \big|\Lambda_{D}\}\mathrm{d}\omega(\bx)\big|\Omega_{D}\right\} \nonumber\\
    &\quad=
    \mathbf{E}\left\{ \int_{\mfd} \mathbf
    E\{(V_{D^*,n}(\bx)-V_{D,n}(\bx))^2\big|\Lambda_{D}\}\mathrm{d}\omega(\bx)
    \big|\Omega_{D}\right\} \nonumber \\
        &\qquad+
         \mathbf{E}\left\{ \int_{\mfd} \mathbf
         E\{(V_{D^*,n}(\bx)-f^*(\bx))^2
         \big|\Lambda_{D}\}\mathrm{d}\omega(\bx)\big|\Omega_{D}\right\}\nonumber\\
         &\quad :=
         \mathcal S_{D,n}+\mathcal A_{D,n}.
\end{align}
Given $\Lambda_{D}$, if the event  $\Omega_{D}$ occurs, by $|\epsilon_i|\leq M$,
\begin{align*}
          \mathbf
         E\left\{(V_{D^*,n}(\bx)-V_{D,n}(\bx))^2\big|\Lambda_{D}\right\}
         & =
          \mathbf{E}\left\{\left(\sum_{i=1}^{|D|}
          \epsilon_i w_{i}K_n(\bx_i, \bx)\right)^2
          \bigg|\Lambda_{D}\right\}\\
          &\leq
           M^2  \sum_{i=1}^{|D|}w_{i}^2 |K_n(\bx_i, \bx)|^2,
\end{align*}
where the second line uses the independence of $\epsilon_1,
\dots,\epsilon_{|D|}$.
This with Lemma~\ref{lem:fikerL1} shows
\begin{align}\label{bound S}
        \mathcal S_{D,n}
         &\leq
         M^2\mathbf{E}\left\{ \int_{\mfd}  \sum_{i=1}^{|D|}
         w_{i}^2|K_n(\bx_i, \bx)|^2
         \mathrm{d}\omega(\bx) \big|\Omega_D\right\}\nonumber\\
          &=
          M^2\mathbf{E}\left\{\sum_{i=1}^{|D|} w_{i}^2\int_{\mfd}
          |K_n(\bx_i, \bx)|^2 \mathrm{d}\omega(\bx)\big|\Omega_D\right\} \nonumber \\
          &\leq
          c_1^2 M^2n^d\mathbf{E}\left\{\sum_{i=1}^{|D|} w_{i}^2\right\}\leq
         \frac{2c_1^2M^2n^d}{|D|}.
\end{align}
On the other hand, similar as the derivation of \eqref{eq:error VDnf}, we obtain
\begin{equation}\label{bound a}
      \mathcal A_{D,n}\leq
      c_5^2n^{-2r}\|f^*\|^2_{\mathbb W_2^r(\mfd)}+ 2\|f^*\|_{L_\infty(\mfd)}^2(\mu(\mfd)+ 2c_1^2 n^{d})\exp\{-C|D|/n^d\}.
\end{equation}
This and \eqref{bound S} and \eqref{decomposition 1} give
\begin{align*}
      &\mathbf{E}\left\{\|V_{D,n}-f^*\|^2_{L_2(\mfd)}\big|\Omega_{D}\right\}\\
      &\;\leq
        c_5^2n^{-2r}\|f\|^2_{\mathbb W_2^r(\mfd)}+2\|f^*\|_{L_\infty(\mfd)}^2(\mu(\mfd)+ 2c_1^2 n^{d})\exp\{-C|D|/n^d\}+\frac{2c_1^2M^2n^d}{|D|}.\notag
\end{align*}
Putting the above estimate and \eqref{bound without event} into
\eqref{expectation decomposition}, we obtain
\begin{align}
    \mathbf{E}\left\{\|V_{D,n}-f^*\|^2_{L_2(\mfd)}\right\}
    &\leq
     c_5^2n^{-2r}\|f\|^2_{\mathbb W_2^r(\mfd)}+\frac{2c_1^2M^2n^d}{|D|}\notag\\
    &\qquad + 2\|f^*\|_{L_\infty(\mfd)}^2(\mu(\mfd)+ 2c_1^2 n^{d}+2)\exp\bigl\{-C|D|/n^d\bigr\}.\label{IMportant}
\end{align}
Taking account of $n\asymp |D|^{\frac1{2r+d}}$ and $r>d/2$, we then have
\begin{equation*}
    n^{d}\exp\left\{-C|D|/n^d\right\}\leq
       C'_5 |D|^{\frac{d}{2r+d}}\exp\left\{-C|D|^{\frac{2r}{2r+d}}\right\}
       \leq\tilde{C}_5|D|^{-\frac{2r}{2r+d}}.
\end{equation*}
Thus,
\begin{equation*}
    \mathbf{E}\left\{\|V_{D,n}-f^*\|^2_{L_2(\mfd)}\right\}\leq
      C_3|D|^{-\frac{2r}{2r+d}}
\end{equation*}
with $C_3$ a constant independent of $|D|$, thus completing
the proof.
\end{proof}

\subsection{Proofs for Section~\ref{sec:dfh_clean}}\label{appendix:dfh_clean}
\begin{proof}[Theorem~\ref{thm:dfh_clean_det}]
By Definition~\ref{defn:dfh} and Theorem~\ref{thm:nondfh_clean_det}, and $\sum_{j=1}^m\frac{|D_j|}{|D|}=1$, for $f^* \in \sobm{2}{r}$,
\begin{align*}
	\normb{V_{D^*,n}^{(m)}-f^*}{\Lpm{2}}
	&\leq \sum_{j=1}^m\frac{|D^*_j|}{|D^*|}\normb{V_{D^*_j,n}-f^*}{\Lpm{2}}\\
	&\leq c_5^2 n^{-r} \norm{f^*}{\sobm{2}{r}},
\end{align*}
thus completing the proof.
\end{proof}

\begin{proof}[Theorem~\ref{thm:dfh_ran_clean}]
For each $j=1,\dots,m$, by Theorem~\ref{thm:ndfh_ran_clean} with $\min_{j=1,\dots,m}|D^*_j|\geq c n^{d+\tau}$, $\tau \in (0,d]$, and also $|D^*_j|\leq |D^*|\leq c'n^{2d}$,
\begin{align*}
    \mathbf{E}\left\{\bigl\|V_{D^*_j,n}-f^*\bigr\|_{L_2(\mfd)}^2\right\}
        \leq Cn^{-r},
\end{align*}
Then, for a partition $\{D^*_j\}_{j=1}^m$ of $D^*$, by Jensen's inequality,
\begin{align*}
    \mathbf{E}\left\{\bigl\|V_{D^*,n}^{(m)}-f^*\bigr\|_{L_2(\mfd)}^2\right\}
        \leq \sum_{j=1}^m\frac{|D^*_j|}{|D|}
        \mathbf{E}\left\{\bigl\|V_{D^*_j,n}-f^*\bigr\|_{L_2(\mfd)}^2\right\}
        \leq Cn^{-2r} \leq C |D^*|^{-r/d}.
\end{align*}
\end{proof}

\subsection{Proofs for Section~\ref{sec:dfh_noisy}}\label{appendix:dfh_noisy}
To prove Theorem~\ref{thm:dfh_noisy_det}, we need the following modified version of
\citet[Proposition 4]{GuLiZh2017}.

\begin{lemma}\label{Lemma:distributed.1}
 For $V_{D,n}^{(m)}$ in Definition~\ref{defn:dfh} with quadrature rule given by \eqref{eq:qrmfd}, there holds
\begin{align}
    &\mathbf{E}\left\{\bigl\|V_{D,n}^{(m)}-f^*\bigr\|_{L_2(\mfd)}^2\right\}\notag\\
        &\quad\leq
         \sum_{j=1}^m\frac{|D_j|^2}{|D|^2}\mathbf{E}\left\{\|V_{D_j,n}
         -f^*\|_{L_2(\mfd)}^2\right\}
        +\sum_{j=1}^m\frac{|D_j|}{|D|}
        \bigl\| V_{D^*_j,n}-f^*\bigr\|_{L_2(\mfd)}^2,\label{eq:distributed.bound}
\end{align}
where $V_{D^*_j,n}$ is given by \eqref{eq:VDnf}.
\end{lemma}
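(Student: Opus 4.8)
The plan is to carry out a bias--variance decomposition at the level of the synthesized estimator $V_{D,n}^{(m)}$, exploiting that in the setting of \eqref{eq:qrmfd} the sampling points are deterministic while the noise $\epsilon_i$ is mean zero and independent across the disjoint servers $D_1,\dots,D_m$; the expectation $\mathbf{E}$ is taken over the noise. First I would note that, since $\mathbf{E}\{\epsilon_i\}=0$, the same computation as at the start of the proof of Theorem~\ref{thm:ndfh_noisy_det} yields $\mathbf{E}\{V_{D_j,n}\}=V^*_{D_j,n}$ for each $j$, hence $\mathbf{E}\{V_{D,n}^{(m)}\}=\sum_{j=1}^m\frac{|D_j|}{|D|}V^*_{D_j,n}=:\overline V$. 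Writing $V_{D,n}^{(m)}-f^*=(V_{D,n}^{(m)}-\overline V)+(\overline V-f^*)$, expanding the $L_2(\mfd)$-norm squared and taking expectations, the cross term drops out because $\overline V-f^*$ is deterministic and $\mathbf{E}\{V_{D,n}^{(m)}-\overline V\}=0$, leaving
\[
\mathbf{E}\bigl\{\|V_{D,n}^{(m)}-f^*\|_{L_2(\mfd)}^2\bigr\}=\mathbf{E}\bigl\{\|V_{D,n}^{(m)}-\overline V\|_{L_2(\mfd)}^2\bigr\}+\|\overline V-f^*\|_{L_2(\mfd)}^2 .
\]

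For the variance term I would use that $V_{D,n}^{(m)}-\overline V=\sum_{j=1}^m\frac{|D_j|}{|D|}(V_{D_j,n}-V^*_{D_j,n})$ is a sum of independent mean-zero random elements of the Hilbert space $L_2(\mfd)$ (independence because distinct servers hold disjoint data and hence see independent noise), so the cross terms vanish again and this expectation equals $\sum_{j=1}^m\frac{|D_j|^2}{|D|^2}\mathbf{E}\{\|V_{D_j,n}-V^*_{D_j,n}\|_{L_2(\mfd)}^2\}$. Applying the identity of the previous step at the single-server level, $\mathbf{E}\{\|V_{D_j,n}-f^*\|_{L_2(\mfd)}^2\}=\mathbf{E}\{\|V_{D_j,n}-V^*_{D_j,n}\|_{L_2(\mfd)}^2\}+\|V^*_{D_j,n}-f^*\|_{L_2(\mfd)}^2$, whence $\mathbf{E}\{\|V_{D_j,n}-V^*_{D_j,n}\|_{L_2(\mfd)}^2\}\le\mathbf{E}\{\|V_{D_j,n}-f^*\|_{L_2(\mfd)}^2\}$; this bounds the variance by the first sum in \eqref{eq:distributed.bound}. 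For the bias term I would invoke Jensen's inequality with the weights $|D_j|/|D|$ (which sum to $1$) and the convexity of $v\mapsto\|v\|_{L_2(\mfd)}^2$ to obtain $\|\overline V-f^*\|_{L_2(\mfd)}^2=\bigl\|\sum_{j=1}^m\frac{|D_j|}{|D|}(V^*_{D_j,n}-f^*)\bigr\|_{L_2(\mfd)}^2\le\sum_{j=1}^m\frac{|D_j|}{|D|}\|V^*_{D_j,n}-f^*\|_{L_2(\mfd)}^2$, the second sum in \eqref{eq:distributed.bound}. Adding the two bounds to the decomposition above gives the claim.

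The argument is essentially routine, and I do not anticipate a genuine obstacle; the only points requiring care are verifying that the two cross terms vanish — which rests on the mean-zero property of the noise in \eqref{eq:noisydats} and on the independence of the noise across the disjoint data sets $D_1,\dots,D_m$ — and noting that deterministic sampling (Assumption~\ref{assum:qrmfd}) is what makes $\overline V-f^*$ and each $V^*_{D_j,n}-f^*$ deterministic, so those cross terms are expectations of linear functionals of mean-zero quantities. This mirrors, in the distributed setting, the split $\mathcal A^\diamond_{D,n}+\mathcal S^\diamond_{D,n}$ used in the proof of Theorem~\ref{thm:ndfh_noisy_det}.
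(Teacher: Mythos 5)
Your proof is correct and is essentially the paper's argument: both amount to the bias--variance identity $\mathbf{E}\{\|V_{D,n}^{(m)}-f^*\|^2\}=\sum_j\frac{|D_j|^2}{|D|^2}\mathbf{E}\{\|V_{D_j,n}-\mathbf{E}V_{D_j,n}\|^2\}+\|\mathbf{E}V_{D,n}^{(m)}-f^*\|^2$ (the paper reaches it by expanding around $f^*$ and regrouping, you by centering at the mean), followed by bounding the variance by the single-server mean-squared error and applying Jensen's inequality to the bias term. Your organization is arguably cleaner, but there is no substantive difference in the ideas used.
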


\begin{proof} Due to \eqref{eq:dfh} and
$\sum_{j=1}^m\frac{|D_j|}{|D|}=1$, we have
\begin{align*}
       &\bigl\|V_{D,n}^{(m)}-f^*\bigr\|_{L_2(\mfd)}^2
        =
       \left\|\sum_{j=1}^m\frac{|D_j|}{|D|}(V_{D_j,n}-f^*)\right\|_{L_2(\mfd)}^2\\
       &\hspace{-2mm}=
       \sum_{j=1}^m\frac{|D_j|^2}{|D|^2}\|V_{D_j,n}-f^*\|_{L_2(\mfd)}^2
       +
       \sum_{j=1}^m\frac{|D_j|}{|D|}\left\langle
       V_{D_j,n}-f^*,\sum_{k\neq
       j}\frac{|D_k|}{|D|}(V_{D_k,n}-f^*)\right\rangle_{L_2(\mfd)}.
\end{align*}
Taking expectations gives
\begin{align*}
        &\mathbf{E}\left\{\bigl\|V_{D,n}^{(m)}-f^*\bigr\|_{L_2(\mfd)}^2\right\}
        =
        \sum_{j=1}^m\frac{|D_j|^2}{|D|^2}\mathbf{E}\left\{\|V_{D_j,n}
        -f^*\|_{L_2(\mfd)}^2\right\}\\
        &\quad+
        \sum_{j=1}^m\frac{|D_j|}{|D|}\left\langle
       \mathbf{E}_{D_j}\bigl\{V_{D_j,n}\bigr\}-f^*,\mathbf{E}\left\{V_{D,n}^{(m)}\right\}-f^*- \frac{|D_j|}{|D|}
       \left(\mathbf{E}_{D_j}\{V_{D_j,n}\}-f^*\right)\right\rangle_{L_2(\mfd)}.
\end{align*}
Here,
\begin{equation*}
       \sum_{j=1}^m\frac{|D_j|}{|D|}\left\langle
       \mathbf{E}_{D_j}\{V_{D_j,n}\}-f^*,\mathbf{E}\left\{V_{D,n}^{(m)}\right\}-f^*\right\rangle_{L_2(\mfd)}
        =\left\|\mathbf{E}\left\{V_{D,n}^{(m)}\right\}-f^*\right\|_{L_2(\mfd)}^2.
\end{equation*}
Then,
\begin{align*}
        \mathbf
        E\left\{\bigl\|V_{D,n}^{(m)}-f^*\bigr\|_{L_2(\mfd)}^2\right\}
          &=
         \sum_{j=1}^m\frac{|D_j|^2}{|D|^2}\mathbf{E}\left\{\|V_{D_j,n}-f^*\|_{L_2(\mfd)}^2\right\}\\
         &\qquad-\sum_{j=1}^m\frac{|D_j|^2}{|D|^2}\left\|\mathbf{E}\{V_{D_j,n}\}-f^*\right\|_{L_2(\mfd)}^2 +
         \left\|\mathbf{E}\left\{V_{D,n}^{(m)}\right\}-f^*\right\|_{L_2(\mfd)}^2.
\end{align*}
By \eqref{eq:semipopulation 1},
$$
      \mathbf
      E\left\{V_{D,n}^{(m)}\right\}=\sum_{j=1}^m\frac{|D_j|}{|D|}V_{D^*_j,n}.
$$
This plus
$\sum_{j=1}^m\frac{|D_j|}{|D|}=1$ gives
\begin{align*}
    \left\|\mathbf{E}\left\{V_{D,n}^{(m)}\right\}-f^*\right\|^2_{L_2(\mfd)}
    &=
       \left\|\sum_{j=1}^m\frac{|D_j|}{|D|}\left(V_{D^*_j,n}-f^*\right)\right\|_{L_2(\mfd)}^2\\[1mm]
    &\leq
       \sum_{j=1}^m\frac{|D_j|}{|D|}\bigl\|V_{D^*_j,n}-f^*\bigr\|_{L_2(\mfd)}^2,
\end{align*}
thus proving the bound in \eqref{eq:distributed.bound}.
\end{proof}

\begin{proof}[Theorem \ref{thm:dfh_noisy_det}]
By Lemma \ref{Lemma:distributed.1}, we
only need to estimate the bounds of $\mathbf
E\left\{\|V_{D_j,n}-f^*\|_{L_2(\mfd)}^2\right\}$ and $\bigl\|V_{D^*_j,n}-f^*\bigr\|_{L_2(\mfd)}^2$.
By $\min_{j=1,\dots,m}|D_j|\geq |D|^{2d/(2r+d)}$ and $D_j$ and Assumption~\ref{assum:qrmfd}, there exists a quadrature rule for each local server which is exact for polynomials of degree $3n-1$ for $n$ satisfying $\frac{c_3}6|D|^{1/(2r+d)}\leq n\leq \frac{c_3}3|D|^{1/(2r+d)}$.

By \eqref{eq:IMportant.1}, for $j=1,\dots,m$,
\begin{equation*}
    \mathbf{E}\left\{\|V_{D_j,n}-f^*\|^2_{L_2(\mfd)}\right\} \leq c_5^2n^{-2r}\|f^*\|^2_{\mathbb W_2^r(\mfd)}+ \frac{c_1c_2^2M^2n^d}{|D_j|}.
\end{equation*}
This gives
\begin{align}\label{bound first term.1}
     &\sum_{j=1}^m\frac{|D_j|^2}{|D|^2}\mathbf{E}\left\{\|V_{D_j,n}-f^*\|_{L_2(\mfd)}^2\right\}  \nonumber\\
     &\quad\leq 36^r c_5^2c_3^{-2r}\|f^*\|^2_{\mathbb W_2^r(\mfd)}|D|^{-\frac{2r}{2r+d}} + 3^{-d} c_1 c_2^2 c_3^{d}M^2  \sum_{j=1}^m\frac{|D_j|^2}{|D|^2}\frac{|D|^{\frac{d}{2r+d}}}{|D_j|}\notag\\
     &\quad = C_1|D|^{-\frac{2r}{2r+d}},
\end{align}
where $C_1:=36^r c_5^2c_3^{-2r}\|f^*\|^2_{\mathbb W_2^r(\mfd)} + 3^{-d}c_1 c_2^2 c_3^d M^2$. 
 
For each $j=1,\dots,m$, Assumption~\ref{assum:qrmfd} implies that there exists a quadrature rule with nodes of $D_j$ and $|D_j|$ positive weights such that $V_{D^*_j,n}$ is a filtered hyperinterpolation for the noise-free data set $\{\bx_i,f^*(\bx_i)\}_{\bx_i\in D_j}$. Theorem~\ref{thm:nondfh_clean_det} then gives
\begin{equation*}
    \left\| V_{D^*_j,n} -f^*\right\|_{L_2(\mfd)}^2
         \leq c_5^2n^{-2r}\|f^*\|^2_{\mathbb W_2^r(\mfd)} \quad \forall j=1,2,\dots,m.
\end{equation*}
This together with conditions
$\sum_{j=1}^m\frac{|D_j|}{|D|}=1$ and $n\geq \frac{c_3}6|D|^{1/(2r+d)}$
gives
\begin{equation}\label{bound second term.1}
     \sum_{j=1}^m\frac{|D_j|}{|D|}\left\| V_{D^*_j,n}
     -f^*\right\|_{L_2(\mfd)}^2
     \leq
     36^r c_5^2c_3^{-2r}\|f^*\|^2_{\mathbb W_2^r(\mfd)}|D|^{-\frac{2r}{2r+d}}.
\end{equation}
Using \eqref{bound first term.1} and \eqref{bound second term.1}, and Lemma \ref{Lemma:distributed.1},
\begin{equation*}
    \mathbf{E}\left\{\|V_{D,n}^{(m)}-f^*\|_{L_2(\mfd)}^2\right\} \leq C_2|D|^{-\frac{2r}{2r+d}}.
\end{equation*}
Here $C_2 = 2^{2r+1}\cdot 3^{2r} c_5^2 c_3^{-2r}\|f^*\|^2_{\mathbb W_2^r(\mfd)} + 3^{-d} c_1 c_2^2 c_3^{d} M^2$.
\end{proof}

We will use the following lemma to prove Theorem~\ref{thm:dfh_ran_noisy}, which can be obtained similarly as Lemma~\ref{Lemma:distributed.1}.
\begin{lemma}\label{Lemma:distributed}
For the distributed filtered hyperinterpolation $V_{D,n}^{(m)}$ with random sampling points,
\begin{align*}
    \mathbf{E}\left\{\|V_{D,n}^{(m)}-f^*\|_{L_2(\mfd)}^2\right\}
        &\leq
         \sum_{j=1}^m\frac{|D_j|^2}{|D|^2}
         \mathbf{E}\left\{\|V_{D_j,n}-f^*\|_{L_2(\mfd)}^2\right\}\\
        &\qquad+\sum_{j=1}^m\frac{|D_j|}{|D|}
        \left\|\mathbf{E}\{V_{D_j,n}\}-f^*\right\|_{L_2(\mfd)}^2.
\end{align*}
\end{lemma}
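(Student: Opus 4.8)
The plan is to mirror the proof of Lemma~\ref{Lemma:distributed.1}, the difference being that now randomness enters both through the i.i.d.\ sampling points and through the noise, so I cannot use the identity $\mathbf{E}\{V_{D,n}^{(m)}\}=\sum_{j}\frac{|D_j|}{|D|}V_{D_j^*,n}$; instead I work directly with the deterministic functions $\mathbf{E}\{V_{D_j,n}\}\in L_2(\mfd)$, which is all the argument needs.

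First, using $V_{D,n}^{(m)}=\sum_{j=1}^m\frac{|D_j|}{|D|}V_{D_j,n}$ and $\sum_{j=1}^m\frac{|D_j|}{|D|}=1$, I expand
\[
    \bigl\|V_{D,n}^{(m)}-f^*\bigr\|_{L_2(\mfd)}^2
    = \sum_{j=1}^m\frac{|D_j|^2}{|D|^2}\|V_{D_j,n}-f^*\|_{L_2(\mfd)}^2
    + \sum_{j\neq k}\frac{|D_j|\,|D_k|}{|D|^2}\bigl\langle V_{D_j,n}-f^*,\,V_{D_k,n}-f^*\bigr\rangle_{L_2(\mfd)}.
\]
Taking expectations and using that $D_1,\dots,D_m$ are mutually independent (disjoint subsets of $D$, each with i.i.d.\ sampling points and independent noise), together with Fubini's theorem to interchange $\mathbf{E}$ with the integration over $\mfd$ defining the inner product, each cross term factors as $\bigl\langle \mathbf{E}\{V_{D_j,n}\}-f^*,\,\mathbf{E}\{V_{D_k,n}\}-f^*\bigr\rangle_{L_2(\mfd)}$.

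Writing $g_j:=\mathbf{E}\{V_{D_j,n}\}-f^*$, I then re-assemble the square,
\[
    \sum_{j\neq k}\frac{|D_j|\,|D_k|}{|D|^2}\langle g_j,g_k\rangle_{L_2(\mfd)}
    = \Bigl\|\sum_{j=1}^m\tfrac{|D_j|}{|D|}g_j\Bigr\|_{L_2(\mfd)}^2 - \sum_{j=1}^m\frac{|D_j|^2}{|D|^2}\|g_j\|_{L_2(\mfd)}^2,
\]
discard the non-positive term $-\sum_j\frac{|D_j|^2}{|D|^2}\|g_j\|_{L_2(\mfd)}^2$, and bound the remaining convex combination by Jensen's inequality, $\bigl\|\sum_j\frac{|D_j|}{|D|}g_j\bigr\|_{L_2(\mfd)}^2\le\sum_j\frac{|D_j|}{|D|}\|g_j\|_{L_2(\mfd)}^2$, which is legitimate since the weights $\frac{|D_j|}{|D|}$ are nonnegative and sum to one. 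Substituting back $g_j=\mathbf{E}\{V_{D_j,n}\}-f^*$ gives exactly the claimed inequality. The only step requiring any care is the independence/Fubini argument that makes the cross terms factor through the expectations (boundedness of the filtered kernel via Lemma~\ref{lem:fikerL1}, continuity of $f^*$, and $|\epsilon_i|\le M$ supply the needed integrability); everything else is the same bookkeeping as in Lemma~\ref{Lemma:distributed.1}, which is why the statement is recorded here without a separate full proof.
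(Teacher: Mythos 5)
Your proof is correct and takes essentially the same route as the paper, which itself only remarks that this lemma ``can be obtained similarly as Lemma~\ref{Lemma:distributed.1}'': you expand the square, factor the cross terms through expectations using the independence of the disjoint blocks $D_j$, reassemble them into $\bigl\|\sum_j\frac{|D_j|}{|D|}\bigl(\mathbf{E}\{V_{D_j,n}\}-f^*\bigr)\bigr\|_{L_2(\mfd)}^2$ minus a nonnegative quantity, and finish with Jensen's inequality. You also correctly identify the one substantive change from the deterministic case, namely that $V_{D_j^*,n}$ must be replaced by $\mathbf{E}\{V_{D_j,n}\}$ since the sampling points are now random.
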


\begin{proof}[Theorem \ref{thm:dfh_ran_noisy}] 
By Lemma \ref{Lemma:distributed}, we only need to estimate the bounds of $\mathbf{E}\bigl\{\|V_{D_j,n}-f^*\|_{L_2(\mfd)}^2\bigr\}$ and $\left\|\mathbf{E}\{V_{D_j,n}\}-f^*\right\|_{L_2(\mfd)}^2$.
To estimate the first, we obtain from \eqref{IMportant} with $D=D_j$
that for $j=1,\dots,m$,
\begin{align*}
    \mathbf{E}\left\{\|V_{D_j,n}-f^*\|^2_{L_2(\mfd)}\right\}
    &\leq
     c_5^2n^{-2r}\|f\|^2_{\mathbb W_2^r(\mfd)} + \frac{2c_1^2M^2n^d}{|D_j|}\notag\\
    &\quad + 2\|f^*\|_{L_\infty(\mfd)}^2\left(\mu(\mfd)+ 2c_1^2 n^{d}+2\right)
     \exp\bigl\{-C|D_j|/n^d\bigr\}.
\end{align*}
Since $\min_{1\leq j\leq m}|D_j|\geq |D|^\frac{d+\tau}{2r+d}$, $n\asymp |D|^{\frac{1}{2r+d}}$, $2r>d$ and $0<\tau<2r$,
$$
     2\|f^*\|_{L_\infty(\mfd)}^2\left(\mu(\mfd)+ 2c_1^2 n^{d}+2\right)
     \exp\bigl\{-C|D_j|/n^d\bigr\}\leq
      \tilde{C}_7|D|^{-\frac{2r}{2r+d}},
$$
where $\tilde{C}_7$ is a constant depending only on $r,c_1,C,d$ and
$f^*$. Thus, there exists a constant $\tilde{C}_8$ independent of
$m,n,|D_1|,\dots,|D_m|$ and $|D|$ such that
\begin{align}\label{bound first term}
         &\sum_{j=1}^m\frac{|D_j|^2}{|D|^2}
         \mathbf{E}\left\{\|V_{D_j,n}-f^*\|_{L_2(\mfd)}^2\right\}  \nonumber\\
         &\qquad\leq
        \tilde{C}_8 \left(|D|^{-\frac{2r}{2r+d}}+
        \sum_{j=1}^m\frac{|D_j|^2}{|D|^2}\frac{|D|^{\frac{d}{2r+d}}}{|D_j|}\right)
        = \bigl(\tilde{C}_8+1\bigr)|D|^{-\frac{2r}{2r+d}}.
\end{align}
To bound $\left\|\mathbf{E}\{V_{D_j,n}\}-f^*\right\|_{L_2(\mfd)}^2$, we use \eqref{condition expectation} and Jensen's inequality to obtain
\begin{align}\label{eq:err_fDjn_f*}
     \left\|\mathbf{E}\{V_{D_j,n}\}-f^*\right\|_{L_2(\mfd)}^2
          &=
          \left\|\mathbf{E}\{\mathbf{E}\{V_{D_j,n}|\Lambda_{ D_j }\}-f^*\}\right\|_{L_2(\mfd)}^2\notag\\
         &=
        \left\|\mathbf{E}\{V_{D^*_j,n}-f^*\}\right\|_{L_2(\mfd)}^2
         \leq
       \mathbf{E}\left\{\|V_{D^*_j,n}-f^* \|_{L_2(\mfd)}^2\right\}.
\end{align}
We now use the similar proof as Theorem~\ref{thm:ndfh_noisy_ran} to prove the error bound of distributed filtered hyperinterpolation $V_{D,n}^{(m)}$. For each $j=1,\dots,m$, we let $\Omega_{D_j}$ be the event such that the sum of the quadrature weights $\sum_{i=1}w_{i,n,D_j}^2\leq 2/|D_{j}|$, and $\Omega_{D_j}^c$ the complement of $\Omega_{D_j}$. We write 
\begin{align*}
      \mathbf{E}\left\{\|V_{D^*_j,n}-f^*\|^2_{L_2(\mfd)}\right\}
      &=\mathbf{E}\left\{\|V_{D^*_j,n}-f^*\|^2_{L_2(\mfd)}|\Omega_{D_j}\right\}\mathbf P\{\Omega_{D_j}\}\\
      &\qquad+\mathbf{E}\left\{\|V_{D^*_j,n}-f^*\|^2_{L_2(\mfd)}|\Omega_{D_j}^c\right\}\mathbf P\{\Omega_{D_j}^c\},
\end{align*}
where
\begin{equation*}
     \mathbf{E}\left\{\|V_{D^*_j,n}-f^*\|^2_{L_2(\mfd)}|\Omega_{D_j}^c\right\}\mathbf P\{\Omega_{D_j}^c\}
      \leq 4\|f^*\|_{L_\infty(\mfd)}^2\exp\bigl\{-C|D_j|/n^d\bigr\}.
\end{equation*}
By \eqref{bound a} with $D=D_j$,
\begin{align*}
     &\mathbf{E}\left\{\|V_{D^*_j,n}-f^*\|^2_{L_2(\mfd)}
      |\Omega_{D_j}\right\}\mathbf P\{\Omega_{D_j}\} \\
     &\quad\leq
       c_5^2n^{-2r}\|f\|^2_{\mathbb W_2^r(\mfd)}+ 2\|f^*\|_{L_\infty(\mfd)}^2(\mu(\mfd)+ 2c_1^2 n^{d})\exp\bigl\{-C|D_j|/n^d\bigr\}.
\end{align*}
These give
\begin{align*}
    &\mathbf{E}\left\{\|V_{D^*_j,n}-f^*\|^2_{L_2(\mfd)}\right\}\\
    &\quad\leq c_5^2n^{-2r}\|f\|^2_{\mathbb W_2^r(\mfd)} + 2\|f^*\|_{L_\infty(\mfd)}^2(\mu(\mfd)
     +2c_1^2n^{d}+2)\exp\bigl\{-C|D_j|/n^d\bigr\}.
\end{align*}
By $\min_{1\leq j\leq m}|D_j|\geq |D|^\frac{d+\tau}{2r+d}$, $n\sim|D|^{\frac{1}{2r+d}}$ and $2r>d$, $0<\tau<2r$,
\begin{equation*}
     \mathbf{E}\left\{\|V_{D^*_j,n}-f^*\|^2_{L_2(\mfd)}\right\}
        \leq \tilde{C}_9|D|^{-\frac{2r}{2r+d}},
\end{equation*}
which with \eqref{eq:err_fDjn_f*} and $\sum_{j=1}^m\frac{|D_j|}{|D|}=1$ gives
\begin{equation}\label{bound second term}
     \sum_{j=1}^m\frac{|D_j|}{|D|}\left\|\mathbf{E}\{V_{D_j,n}\}-f^*\right\|_{L_2(\mfd)}^2
     \leq \tilde{C}_{9}|D|^{-\frac{2r}{2r+d}}.
\end{equation}
Using \eqref{bound first term} and \eqref{bound second term}, and
Lemma \ref{Lemma:distributed}, we then obtain
\begin{equation*}
\mathbf{E}\left\{\|V_{D,n}^{(m)}-f^*\|_{L_2(\mfd)}^2\right\}
     \leq
     C_4|D|^{-\frac{2r}{2r+d}},
\end{equation*}
thus completing the proof.
\end{proof}


\newpage

\section{Table of notations} 
\label{app:notations}
\thispagestyle{empty}

\scalebox{.9}{
\begin{tabular}{l|p{0.71\textwidth}}
\toprule
\textbf{Symbol} & \textbf{Meaning}\\
\toprule
$\N$ & Set of natural numbers $\{1,2,\dots\}$\\
$\Nz$ & $\N\cup \{0\}$\\
$\Rd[d]$ & $d$-dimensional real coordinate space\\
$\Rplus$ & Set of non-negative real numbers\\
$\mfd$     & Compact and smooth Riemannian manifold, where we call $\mfd$ $d$-manifold\\
$d$        & Dimension of manifold $\mathcal{M}$\\
$B(\bx,\alpha)$ & Ball with center $\bx$ and radius $\alpha$ in manifold \\
$\rho(\bx,\by)$ & Distance between points $\bx,\by\in \mfd$ induced by Riemannian metric\\
$\mu$ & Lebesgue measure on $\mfd$\\
$C(\mfd)$ & Continuous function space on $\mfd$\\
$L_p(\mfd)$ & Real-valued $L_p$ space on $\mfd$\\
$r$ & Smoothness index of Sobolev space containing the target function\\
$W^r_p(\mfd)$ & Sobolev space on $\mfd$ with smoothness $r$ and $p$-th norm\\
$n$  & Degree of polynomial or polynomial space on $\mfd$\\
$\Pi_n$ & Polynomial space of degree $n$ on $\mfd$\\
$P_n$ & Diffusion polynomial of degree $n$ on $\mfd$\\
$E_n(f)_p$ & Best approximation for $f$ in $L_p(\mfd)$\\
$\LBm$ & Laplace-Beltrami operator on $\mfd$\\
$\eigfm$ & The $\ell$th eigenfunction of Laplace-Beltrami operator on $\mfd$\\
$\eigvm$ & The $\ell$th eigenvalue of Laplace-Beltrami operator on $\mfd$\\ 
$D$ & Data set of $|D|$ pairs of sampling points $\bx_i$ and real data $y_i$\\
$D^*$ & Clean data set of pairs of sampling points $\bx_i$ and real data $f^*(\bx_i)$ for ideal function $f^*$\\
$|D|$ or $N$ & Number of elements of a data set $D$\\
$m$ & Number of servers in distributed filtered hyperinterpolation\\
$\{D_j\}_{j=1}^m$ & Set of $m$ distributed data sets for a data set\\
$\Lambda_D$ & Set of sampling points $\bx_i$ of a data set $D$\\
$\QH$ & Quadrature rule, a set of $N$ pairs of real weights and points on the manifold\\
$\QH^{(m)}$ & Quadrature rule for distributed filtered hyperinterpolation with $m$ servers and weights given by \eqref{eq:wi_dfh_ran}\\
$f^\ast$   & Ideal target function $\mathcal{M}\to \mathbb{R}$ (noiseless outputs)\\
$f$        & Noisy function ($f^\ast$ plus noise)\\
$\epsilon_i$ & Noise for $i$th sample\\
$H$ & Filter in Definition~\ref{def:fiH}\\
$V_n(f)$ or $V_{n,\fiH}(f)$ & Filtered approximation of degree $n$ for function $f$ with filter $H$\\
$V_{D,n}$ & Non-distributed filtered hyperinterpolation with degree $n$ 
for data $D$ in Definition~\ref{defn:nondfh_det}\\[1mm]
$V_{D,n}^{(m)}$ & Distributed filtered hyperinterpolation with degree $n$ and $m$ servers for data $D$ in Definition~\ref{defn:dfh}\\
$V_{D^*,n}$ & Non-distributed filtered hyperinterpolation with degree $n$ for noiseless data $D^*$ in \eqref{eq:VDnf}\\[1mm]
$V_{D^*,n}^{(m)}$ & Distributed filtered hyperinterpolation with degree $n$ and $m$ servers for noiseless data $D^*$ in \eqref{eq:dfh_clean}\\
\bottomrule
\end{tabular}
}
\end{document}